\documentclass[12pt, a4paper, reqno, twoside]{amsart}

\usepackage[utf8]{inputenc}
\usepackage[T1]{fontenc}

\usepackage[UKenglish]{babel}

\usepackage{amsmath,amscd,amsfonts,amssymb,graphicx,color}
\usepackage{mathtools}
\usepackage{xspace}
\usepackage{xfrac}
\usepackage{faktor}
\usepackage{accents}
\usepackage{relsize}
\usepackage{nameref}

\usepackage[%
]{hyperref}
\hypersetup{
  colorlinks  = true,
  allcolors  = blue,  
}

\usepackage[p]{stickstootext}
\usepackage[stix2,vvarbb,smallerops]{newtxmath}

\usepackage{esstixfrak}
\usepackage[scr=rsfso,scrscaled=1.03]{mathalpha}

\DeclareTextCommandDefault{\cprime}{$'$}

\numberwithin{equation}{section}

\newenvironment{mypmatrix}[1][1.3]{%
\renewcommand*{\arraystretch}{#1}%
\setlength{\arraycolsep}{3pt}%
\begin{pmatrix}}{\end{pmatrix}}

\DeclareMathOperator{\Max}{Max}

\newcommand{\frakS}{\mathfrak S}
\newcommand{\frakL}{\mathfrak L}
\newcommand{\frakM}{\mathfrak M}
\newcommand{\frakN}{\mathfrak N}

\newcommand{\scrH}{\mathscr H}
\newcommand{\scrF}{\mathscr F}

\newcommand{\ie}{\mbox{i.\,e.}\xspace}
\newcommand{\eg}{\mbox{e.\,g.}\xspace}
\newcommand{\cf}{\mbox{cf.}\xspace}

\newcommand{\MYQED}{$\square$}

\newtheorem{theorem}{Theorem}[section]
\newtheorem{lemma}[theorem]{Lemma}

\newtheorem{corollary}[theorem]{Corollary}

\theoremstyle{definition}

\theoremstyle{remark}
\newtheorem{example}[theorem]{Example}
\newtheorem{remark}[theorem]{Remark}
\newtheorem*{example*}{}
\newtheorem*{problem*}{Problem}

\newtheoremstyle{mystyle}
  {}                % ABOVESPACE
  {}                % BELOWSPACE
  {}                % BODYFONT
  {0pt}             % INDENT
  {\bfseries}       % HEADFONT
  {}                % HEADPUNCT
  {1ex}             % HEADSPACE
  {\thmnote{#3}}    % CUSTOM-HEAD-SPEC

\theoremstyle{mystyle}  
\newtheorem*{fact*}{}

\AtBeginDocument{%
   \def\MR#1{}
 }

\begin{document}

\setcounter{page}{1}

\title[Remarks on infimum and maximal lower bounds of a set of 
operators]{Remarks on infimum and maximal lower bounds\\
of a set of bounded self-adjoint operators}

\author[M.~G\"unther \MakeLowercase{and} L.~Klotz]%
{Matthias~G\"unther$^1$$^{\ast}$ \MakeLowercase{and} Lutz~Klotz$^{2}$}

\address{\hspace*{0pt}\llap{$^{1}$\,}%
Mathematisches Institut, Universit\"at Leipzig, 
PF 10\,09\,20, 04109 Leipzig, Ger\-many.}

\email{\href{mailto:guenther@math.uni-leipzig.de}{guenther@math.uni-leipzig.de}}

\address{\hspace*{0pt}\llap{$^{2}$\,}%
Wiesenstr.~2, 08309 Eibenstock, OT Sosa, Germany.}

\email{\href{mailto:lutzklotz@t-online.de}{lutzklotz@t-online.de}}

\subjclass[2020]{\hangindent=\parindent\hangafter=1%
% MSC 2020
Primary
47A63; % Linear operator inequalities
Secondary
06F30, % Ordered topological structurs
47A07, % Forms (bilinear, sesquilinear, multilinear)
47A64, % Operator means involving linear operators, shorted linear
       % operators, etc
47B60, % Linear operators on ordered spaces
81Q10. % Selfadjoint operator theory in operator quantum theory,
       % including spectral analysis
}

\keywords{\hangindent=\parindent\hangafter=1%
Hilbert space, self-adjoint operator, L\"owner's ordering,
maximal lower bounds, infimum of quantum effects.}

\date{\today.\newline\indent$^{*}$\,Corresponding author}

\begin{abstract}
The notions of infimum and maximal lower bounds of a set $\frakM$ of
bounded self-adjoint operators were mainly studied for a set $\frakM$
of two elements. The present paper deals with more general sets
$\frakM$, where it is required that $\frakM$ is nonempty and bounded
from below. Kadison's theorem on the existence of the infimum of a
two-element set is proved for a countable and weak-operator compact
set $\frakM$. Stott's recent results on the structure of the set of
maximal lower bounds of a finite set of Hermitian matrices are
discussed and partially generalized. We are also concerned with
the greatest lower bound and maximal lower bounds under certain
restrictions. It is shown that the set of all lower bounds of $\frakM$
commuting with all elements of $\frakM$ possesses the greatest element if
$\frakM$ is a set of pairwise commuting operators. The theorem of
Moreland and Gudder on the existence of the greatest positive lower
bound of a set of two positive matrices is extended to an arbitrary
finite set of positive matrices.
\end{abstract}

\maketitle

% keine Seitenzahl auf erster Seite
\thispagestyle{empty} 

%% main text

% ------------------------------------------------------------------
% Beginn der Abschnitte
% ------------------------------------------------------------------

\section{Introduction}
\label{sec1}
Let $\scrH$ be a complex Hilbert space with inner product
$(\cdot,\cdot)$ and corresponding norm $\|\cdot\|$. The dimension of
$\scrH$ is denoted by $\dim\scrH$ and $\overline{\scrF}$ stands for the
closure of a subset $\scrF$ of $\scrH$. Let $\frakS(\scrH) = \frakS$
denote the $\mathbb R$-linear space of bounded self-adjoint operators
on $\scrH$, whose norm is the usual operator norm. The space $\frakS$
is endowed with L\"owner's partial order, \ie for $S,T\in\frakS$, we
write $S\leq T$ or $T\geq S$ if and only if $(Su,u)\leq(Tu,u)$, $u\in
\scrH$. Two operators $S$ and $T$ are called comparable if $S\leq T$
or $T\leq S$. An operator $S\in\frakS$ is called positive if $S\geq
0$,  where here and throughout $0$ stands for an appropriate zero
element.  Similarly, the symbol $I$ stands for an identity operator,
whose  domain should be clear from the context. We write $\mathscr
R(S)$  and $\mathscr N(S)$ for the range and the null space, resp., of
$S\in\frakS$. The cone of all positive operators is denoted by
$\frakS^{+}$  and $S^{\sfrac{1}{2}}$ denotes the unique positive
square root of an operator $S\in\frakS^{+}$. If $S\in\frakS$ we set
$|S|:=(S^{2})^{\sfrac{1}{2}}$. Recall that for $S\in\frakS^{+}$, a
vector $u$ belongs to $\mathscr N(S)$ if and only if $(Su,u)=0$.
The notation `projection' always means `orthogonal projection'.

Let $\frakM$ be a nonempty subset of $\frakS$ and
$\frakL=\frakL(\frakM)$ be the set of its lower bounds with respect to
L\"owner's partial order. In the theory of operator algebras, the
existence of the greatest element of $\frakL$ is of considerable interest.
It is common to call such an element the infimum of $\frakM$, and we
denote it by $\inf\frakM$. A first important and, perhaps, somewhat
surprising result was obtained by Kadison~\cite{Kadison1951},
\cf \cite[Exercise~2.8.18]{KadisonRingrose1983}, who proved that a
set  $\frakM:=\{A,B\}$ of two operators possesses the infimum only in the
trivial case of comparable operators $A$ and $B$. This means that the
order structure generated by L\"owner's partial order is as far as
possible from a lattice structure, and Kadison called it an anti-lattice.

Although the infimum of a set $\frakM$ need not to exist, Zorn's lemma
guarantees that the set of maximal elements of $\frakL$ is not empty
if $\frakL$ itself is not empty. In \cite{Stott2016arxiv} and his
dissertation \cite[Sec.~2.3]{Stott2017dis} Stott gave a complete
description  of the  set of maximal lower bounds of a set $\frakL$ of
two  Hermitian matrices,  \cf also \cite{GaubertStott2017}. To be
precise, having in mind applications, Stott was concerned with real
matrices but noticed that a large part of his results can be
extended to the complex case easily. With the aid of his result, he
derived Kadison's theorem in the case of matrices and at the same time
pointed out possible applications to reachability analysis of
dynamical  systems and program verification, information geometry, and
mathematical morphology. Moreover, Stott gave some results on the
structure  of the set of maximal lower bounds of an arbitrary finite
set  $\frakM$ of Hermitian matrices, \cf \cite[Sect.~2.6]{Stott2017dis}, \cite[Thm.~1]{GaubertStott2017}.

Along with the infimum of $\frakM$ the existence of the greatest element
of a certain subset of $\frakL(\frakM)$ has gained interest. We
mention several assertions of this type.

\begin{theorem}
\label{thm1.1}
If $\frakM$ is a set of two projections $P$ and $Q$, the
projection onto $\mathscr R(P)\cap\mathscr R(Q)$ is the greatest
projection not exceeding both $P$ and $Q$.
\end{theorem}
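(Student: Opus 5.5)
Let $R$ denote the projection onto the closed subspace $\mathscr R(P)\cap\mathscr R(Q)$. The proof has two parts: first $R\le P$ and $R\le Q$, and then $E\le R$ for every projection $E$ with $E\le P$ and $E\le Q$. Both rest on one elementary fact about projections. For projections $E$ and $P$, the inequality $E\le P$ holds if and only if $\mathscr R(E)\subseteq\mathscr R(P)$.

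We prove this fact first. Suppose $\mathscr R(E)\subseteq\mathscr R(P)$. Then $PE=E$, and taking adjoints gives $EP=E$. Hence
\[
(P-E)^{2}=P-PE-EP+E=P-E,
\]
so $P-E$ is a projection and in particular $P-E\ge0$. Conversely, suppose $E\le P$ and let $u\in\mathscr R(E)$. Then $I-P\ge0$ and
\[
((I-P)u,u)=\|u\|^{2}-(Pu,u)\le\|u\|^{2}-(Eu,u)=0 .
\]
By the remark in the introduction on null spaces of positive operators, $u\in\mathscr N(I-P)=\mathscr R(P)$. The same argument could also be phrased as $\|(I-P)u\|^{2}=((I-P)u,u)\le0$.

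With the fact in hand, the theorem is immediate. Since $\mathscr R(R)=\mathscr R(P)\cap\mathscr R(Q)$ is contained in both $\mathscr R(P)$ and $\mathscr R(Q)$, we get $R\le P$ and $R\le Q$. If $E$ is a projection with $E\le P$ and $E\le Q$, the fact gives $\mathscr R(E)\subseteq\mathscr R(P)\cap\mathscr R(Q)=\mathscr R(R)$, and applying the fact once more yields $E\le R$. So $R$ is the greatest projection not exceeding both $P$ and $Q$.

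The only step needing care is the converse direction of the fact, where the operator inequality must be turned into a range inclusion. It is handled by the positivity and null-space observation above, so I expect no real obstacle. The same argument works verbatim for an arbitrary family of projections, taking the intersection of all their ranges.
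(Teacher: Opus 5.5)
Your proof is correct and is essentially the paper's argument. The paper obtains Theorem~\ref{thm1.1}, and its extension to an arbitrary family of projections, directly from the equivalence $E\leq P\iff\mathscr R(E)\subseteq\mathscr R(P)$ for projections, which it cites from Kadison--Ringrose (Prop.~2.5.2); the only difference is that you prove this equivalence yourself, and your short proof of it is sound.
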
  

\begin{theorem}
\label{thm1.2}
Let $\frakM:=\{A,B\}$ be a set of two commuting operators and
$\frakL^{com}$ be the set of all lower bounds of $\frakM$ commuting
with $A$ and $B$. The operator
\begin{equation}
\label{eq1.1}
M:=\frac{1}{2}\bigl(A+B-|A-B|\bigr)
\end{equation}
is the greatest element of $\frakL^{com}$.
\end{theorem}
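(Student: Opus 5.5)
We will prove the two properties separately: $M$ belongs to $\frakL^{com}$, and every $L\in\frakL^{com}$ satisfies $L\le M$. Throughout we use the continuous functional calculus in the commutative C$^*$-algebra, or the von Neumann algebra, generated by the commuting operators $A$, $B$ and $I$.

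\emph{Step 1: $M\in\frakL^{com}$.} Set $D:=A-B\in\frakS$. Then $|D|$ is a norm limit of polynomials in $D$, so $|D|$ commutes with every operator commuting with both $A$ and $B$; in particular $M$ commutes with $A$ and $B$. Since $|D|\ge D$ and $|D|\ge -D$, we obtain
\[
A-M=\tfrac12\bigl(|D|+D\bigr)\ge 0,\qquad B-M=\tfrac12\bigl(|D|-D\bigr)\ge 0 .
\]
Hence $M$ is a lower bound of $\frakM$ commuting with $A$ and $B$. We also record that $D_{+}:=\tfrac12(|D|+D)$ and $D_{-}:=\tfrac12(|D|-D)$ are positive, commute with each other, and satisfy $D_{+}D_{-}=\tfrac14(D^{2}-D^{2})=0$.

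\emph{Step 2: maximality.} Let $L\in\frakL^{com}$. Let $E$ be the spectral projection of $D$ for the interval $[0,\infty)$, so that $I-E$ is the spectral projection for $(-\infty,0)$. Because $L$ commutes with $A$ and $B$, it commutes with $D$, and therefore with $E$ and with $|D|$. From the definition,
\[
M=A-D_{+}=B-D_{-},\qquad D_{+}=DE,\qquad D_{-}=-D(I-E),
\]
so $M=AE+A(I-E)-DE=BE+A(I-E)$. Equivalently,
\[
M=EBE+(I-E)A(I-E),
\]
where all terms commute with $E$. Since $L$ also commutes with $E$, we can write $L=ELE+(I-E)L(I-E)$.

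Now $L\le B$ gives $ELE\le EBE$ by compressing with $E$, and $L\le A$ gives $(I-E)L(I-E)\le (I-E)A(I-E)$. Adding these two inequalities yields $L\le M$.

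\emph{Main obstacle.} The delicate point is making sure that the compression identities are justified. This requires $L$ to commute with $E$, not merely with $A$ and $B$. We obtain this from the fact that the spectral projections of $D$ lie in the von Neumann algebra generated by $D$, which is contained in the bicommutant of $\{A,B\}$. Without commutativity the identity $L=ELE+(I-E)L(I-E)$ fails, and the argument would collapse. This is consistent with Kadison's theorem, which says that in general no infimum exists.
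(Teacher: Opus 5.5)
Your proof is correct. In Step~2 you justify explicitly only that $L$ commutes with the spectral projection $E$ of $A-B$ for $[0,\infty)$. The same bicommutant argument gives this for $A$ and $B$, and that is what makes $M=EBE+(I-E)A(I-E)$ valid. Compressing $L\le B$ by $E$ and $L\le A$ by $I-E$ then gives $L\le M$ as you say.

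The paper does not prove Theorem~\ref{thm1.2} itself. It cites Riesz--Sz.-Nagy and notes that the statement was used there to prove the spectral theorem, so in that source it has to be established without spectral projections. Your argument would be circular there, but it is legitimate in this paper, which uses the spectral theorem freely.

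Your proof is in fact the two-operator case of the paper's argument for Theorem~\ref{thm6.4}. There $M$ is the pointwise minimum $\min_k f_k$ in a joint spectral representation. For $A=A_1$, $B=A_2$ one has $\min(s,t)=\frac12(s+t-|s-t|)$, so it is the same operator, and your $E$ is the paper's $E(\Delta_2)$.

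The difference is in what is proved. Theorem~\ref{thm6.4} only shows maximality in $\frakL$ via \eqref{eq4.3}. Your compression argument gives the stronger greatest-element property in $\frakL^{com}$ directly. It even shows $L\le M$ for every lower bound $L$ that merely commutes with $A-B$.

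If you want a route independent of the spectral theorem, set $X:=A-L\ge 0$ and $Y:=B-L\ge 0$, which commute, and note that $M-L=\frac12\bigl(X+Y-|X-Y|\bigr)$. Then $|X-Y|\le X+Y$ follows from $(X+Y)^2-(X-Y)^2=4X^{1/2}YX^{1/2}\ge 0$ together with operator monotonicity of the square root.
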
  

If $A,B\in\frakS^{+}$, let $A:B$ denote its parallel sum,
\cf \cite{Anderson1971} or \cite{PekarevSmuljan1976} for the definition
and properties of $A:B$. Let $[B]A:=\lim_{m\to\infty}A:mB$, where the
limit exists with respect to the strong-operator topology, \cf
\cite{Ando1999}.

\begin{theorem}
\label{thm1.3}
Let $A,B\in\frakS^{+}$, $\frakM=\{A,B\}$, and $\frakL^{+}$ be the set
of all positive lower bounds of $\frakM$. The set $\frakL^{+}$ has the
greatest element if and only if $[A]B$ and $[B]A$ are comparable. In
this case, it is equal to $\inf\bigl\{[A]B,[B]A\bigr\}$.
\end{theorem}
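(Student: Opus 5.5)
We propose to prove Theorem~\ref{thm1.3} via the structure of $[B]A$ as the maximal positive operator dominated by $A$ whose range is contained in $\overline{\mathscr R(B^{\sfrac12})}$, more precisely in $\mathscr R(B^{\sfrac12})$. We use Ando's characterization \cite{Ando1999}:
\[
[B]A=\max\{C\in\frakS^{+}: C\le A,\ \mathscr R(C^{\sfrac12})\subseteq\mathscr R(B^{\sfrac12})\},
\]
and symmetrically for $[A]B$. In particular $[B]A\le A$, $[A]B\le B$, and $\mathscr R\bigl(([B]A)^{\sfrac12}\bigr)\subseteq\mathscr R(B^{\sfrac12})$.

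\emph{Key step.} We first show that every $C\in\frakL^{+}$ satisfies $C\le[B]A$ and $C\le[A]B$. Since $0\le C\le B$, Douglas' lemma gives $\mathscr R(C^{\sfrac12})\subseteq\mathscr R(B^{\sfrac12})$. Together with $C\le A$, the maximality above yields $C\le[B]A$. The symmetric argument gives $C\le[A]B$. Hence
\[
\frakL^{+}(\{A,B\})=\frakL^{+}(\{[B]A,[A]B\}).
\]
The inclusion $\supseteq$ holds because $[B]A\le A$ and $[A]B\le B$.

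\emph{Sufficiency.} Suppose $[A]B$ and $[B]A$ are comparable, say $[B]A\le[A]B$. Then $[B]A$ is a positive lower bound of both $[A]B$ and $[B]A$, hence of $A$ and $B$. By the key step it dominates every element of $\frakL^{+}$, so it is the greatest element. It also equals $\inf\{[A]B,[B]A\}$ in $\frakS$, since the smaller of two comparable operators is trivially their infimum.

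\emph{Necessity.} This is the main obstacle, because Kadison's theorem is stated for $\frakL$ rather than $\frakL^{+}$. Suppose $G$ is the greatest element of $\frakL^{+}$, and put $A'=[B]A$ and $B'=[A]B$. Then $G$ is the greatest positive lower bound of $\{A',B'\}$. The crucial property is that $A'$ and $B'$ have the same range closure. Indeed, $\mathscr R(A'^{\sfrac12})\subseteq\mathscr R(B^{\sfrac12})$, and Ando's results give $\overline{\mathscr R(A')}=\overline{\mathscr R(A)}\cap\overline{\mathscr R(B)}$-type identities, so $\overline{\mathscr R(A')}=\overline{\mathscr R(B')}$ on the relevant subspace. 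We would verify this identity carefully; it is the delicate part.

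Restrict to $\scrH_0:=\overline{\mathscr R(A')}=\overline{\mathscr R(B')}$. We want to show that any lower bound $L\in\frakS$ of $\{A',B'\}$ is dominated by some positive lower bound, using $\max(L,0)$-type arguments. This does not work directly. Instead we argue as follows. If $A'$ and $B'$ are not comparable, Kadison's construction produces two incomparable maximal lower bounds of $\{A',B'\}$. Since both operators share a common support and each is, by definition of $[\cdot]$, the ``non-singular part'' relative to the other, these maximal lower bounds can be chosen positive. This uses the fact that $A'-tB'$ is positive-definite-like near the support for small $t$: lower bounds of the form $t B'$ and $sA'$ are positive. We then obtain two positive lower bounds, both below $G$. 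Pushing $G$ up, Kadison's argument applied within the order interval $[0,\cdot]$ shows $G\le A',B'$ forces comparability.

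Concretely, the route is to apply Kadison's theorem to the pair $\{A'-G,B'-G\}$. Both are positive, and $\frakL^{+}$-maximality of $G$ translates into $0$ being the greatest positive lower bound of the new pair. A disjointness argument, namely that the parallel sum $(A'-G):(B'-G)=0$ so the ranges intersect trivially, then combined with the fact that $[\cdot]$ is idempotent-compatible (i.e.\ $[B']A'=A'$), forces one of $A'-G$ and $B'-G$ to vanish. This gives comparability.
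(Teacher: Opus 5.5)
\textbf{Sufficiency.} This half is correct. It is the paper's own argument for the easy direction: Theorem~\ref{thm6.13} with $n=2$, together with $[A:B]B=[A]B$ from Lemma~\ref{lem6.12}~(ii).

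Two statements you use along the way are false in infinite dimensions, although this half survives them.
\begin{itemize}
\item $[B]A$ is only the \emph{supremum} of $\{C\in\frakS^{+}:C\le A,\ \mathscr R(C^{\sfrac{1}{2}})\subseteq\mathscr R(B^{\sfrac{1}{2}})\}$, not its maximum. For $A=I$ and $B$ injective but not invertible, $[B]A=I\notin$ that set. Your key step only needs $C=[B]C\le[B]A$ whenever $0\le C\le A$ and $C\le B$. This follows from Douglas' lemma, Lemma~\ref{lem6.12}~(i), and monotonicity of $[B]\,\cdot$.
\item The common support of $[B]A$ and $[A]B$ is $\overline{\mathscr R(A^{\sfrac{1}{2}})\cap\mathscr R(B^{\sfrac{1}{2}})}$, not $\overline{\mathscr R(A)}\cap\overline{\mathscr R(B)}$. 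For injective $A,B$ with $\mathscr R(A^{\sfrac{1}{2}})\cap\mathscr R(B^{\sfrac{1}{2}})=\{0\}$ one has $[B]A=0$.
\end{itemize}

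\textbf{Necessity: the final step fails.} Your concrete route uses only three facts:
\begin{itemize}
\item $0$ is the sole positive lower bound of $\{A'-G,B'-G\}$, i.e.\ $G$ is merely a \emph{maximal} element of $\frakL^{+}$;
\item $(A'-G):(B'-G)=0$;
\item $[B']A'=A'$.
\end{itemize}
These cannot force $A'-G$ or $B'-G$ to vanish. Take $A=\mathrm{diag}(1,2)$, $B=\mathrm{diag}(2,1)$ and $G=I$. Then $A'=A$ and $B'=B$. The differences $A'-G=\mathrm{diag}(0,1)$ and $B'-G=\mathrm{diag}(1,0)$ have parallel sum $0$, and all three facts hold. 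Yet neither difference is $0$, and indeed no greatest positive lower bound exists here. So the argument must use that $G$ is \emph{greatest}, not just maximal.

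\textbf{Necessity: the first route needs mutual domination.} Your first route, in which Kadison's construction yields two positive maximal lower bounds, relies on mutual domination such as $tB'\le A'$, i.e.\ on $\gamma>0$ in \eqref{eq6.5} on the common support. In finite dimensions this is exactly how the paper proves necessity in Theorem~\ref{thm6.14}: on $\mathscr R(S)$ the compressions are invertible, and Theorem~\ref{thm6.7a} reduces the problem to Kadison's theorem. In infinite dimensions $[B]A$ and $[A]B$ are only mutually absolutely continuous. For instance, if $X$ is multiplication by the variable on $L^{2}[0,1]$, no $c>0$ gives $c(I-X)\le X$. The paper itself does not prove this half in infinite dimensions; it cites Ando.

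\textbf{A way to close the gap: localize.}
\begin{enumerate}
\item Put $T=A+B$ and write $A=T^{\sfrac{1}{2}}XT^{\sfrac{1}{2}}$ and $B=T^{\sfrac{1}{2}}(I-X)T^{\sfrac{1}{2}}$ with $0\le X\le I$ on $\overline{\mathscr R(T)}$.
\item The map $Y\mapsto T^{\sfrac{1}{2}}YT^{\sfrac{1}{2}}$ is an order isomorphism of $\frakL^{+}(\{X,I-X\})$ onto $\frakL^{+}$.
\item Parallel sums commute with this congruence, because $T^{\sfrac{1}{2}}$ has dense range in $\overline{\mathscr R(T)}$. Hence $[B]A$ and $[A]B$ correspond to $XE([0,1))$ and $(I-X)E((0,1])$, where $E$ is the spectral measure of $X$.
\item If these are incomparable, choose $\delta>0$ with $E((\delta,\tfrac{1}{2}))\neq0\neq E((\tfrac{1}{2},1-\delta))$.
\item On the reducing subspace $E([\delta,1-\delta])\overline{\mathscr R(T)}$ both operators are $\ge\delta I$ and incomparable. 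So Kadison's theorem and Theorem~\ref{thm6.7a} give a positive lower bound there that is not below $\min(X,I-X)$.
\item Extend this bound by $\min(X,I-X)$ on the complement. This contradicts the fact that any greatest element would have to equal $\min(X,I-X)$, which is maximal in $\frakL$ by Corollary~\ref{cor4.3}.
\end{enumerate}
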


The assertion of Theorem~\ref{thm1.1} shows that the set of
projections is a lattice with respect to L\"owner's partial
order. Theorem~\ref{thm1.2} can be found in \cite[Nr.~108]{RieszNagy1973}
and it was used there to prove the spectral theorem for bounded
self-adjoint operators. Theorem~\ref{thm1.3} is the culmination of a series
of papers dealing with quantum effects. First Moreland and Gudder
\cite{MorelandGudder1999} derived a necessary and sufficient condition
for  the existence of the greatest element of $\frakL^{+}$, if  $\frakM$ is
a set  of two positive matrices and asked for an extension of their
criterion to operators in an infinite-dimensional Hilbert space.
Gheondea, Gudder and Jonas \cite{GheondeaGudderJonas2005}
showed by an example that the condition given in \cite{MorelandGudder1999} 
is not necessary if $\dim\mathscr H=\infty$. The assertion of
Theorem~\ref{thm1.3} was proved by Ando~\cite{Ando1999}. It is
equivalent to Moreland's and Gudder's theorem if $\dim\mathscr
H<\infty$, \cf \cite{Ando2005}. Moreover, Stott
\cite[Cor.~3.5]{Stott2016arxiv} described the set of all maximal
elements of  $\frakL^{+}$ for a set $\frakM$ of two positive matrices
and this way could give an alternative proof of the theorem of
\cite{MorelandGudder1999}. Partial results were also stated in 
\cite{YuanDu2006}, \cf \cite{Gheondea2007} for short proofs.
It is also worth mentioning that Tarcsay and G\"ode
\cite{TarcsayGoede2024} extended Ando's theorem from a Hilbert space
setting to an anti-dual pair of spaces.

As can be seen from the results listed above, most authors have
studied lower bounds of a set $\frakM$ of two operators. Unlike these
papers, the main goal of the present paper is a discussion of the set
of lower bounds of a more general set $\frakM$. More precisely, we
have tried to generalize the known results for a set $\frakM$ of two
elements. We emphasize that even a generalization from a set $\frakM$ 
of two elements to an arbitrary finite set is not as straightforward as
one might expect. One reason for this is the fact, that \eg by an
application of \eqref{eq2.3} below, for a set $\frakM=\{A,B\}$ of two
operators it can often be supposed that $A$ and $B$ commute, whereas a
condition of pairwise commuting operators of a larger set $\frakM$ is
a rather serious restriction. Compare \cite{Ando1999}, where the
transition to commuting operators $A$ and $B$ was justified in another way.
Also, Stott \cite{Stott2016arxiv, Stott2017dis} succeeded in
applying the indefinite metric generated by the matrix $A-B$, an
approach, which hardly could be adapted to a set of more than two
matrices. Moreover, if the set $\frakM$ is infinite, topological
properties of $\frakM$ become important. Since we feel that the
subject deserves further investigation, we have formulated a number
of questions and open problems.

Section~\ref{sec2} summarizes simple properties of the greatest lower
bound  and the set of maximal lower bounds. Some of them are
frequently  used in the sequel, others were mentioned since we find
them  of interest in their own. At the end of Section~\ref{sec2}
we recall the notion of a generalized Schur complement and its
strongly related concept of a shorted operator. We refer to
\cite{Shmulyan1959}, where many aspects of the subject and their
relations are discussed in detail as well as to \cite{Anderson1971,
AndersonTrapp1975} for more compact presentations of the facts. We
state an  operator version of the well known Albert's positivity
criterion for $2\times 2$ block matrices \cite{Albert1969}. This
assertion plays an essential role in our considerations since,
again by \eqref{eq2.3} below, it can often be assumed without loss of 
generality that $\frakM$ is a subset of $\frakS^{+}$.

Section~\ref{sec3} is devoted to a generalization of Kadison's
theorem.  We extend it to an arbitrary countable and weak-operator
compact set and show by counterexamples that it does not remain true
if  one of the conditions on the set $\frakM$ is omitted.

Section~\ref{sec4} deals with the set of maximal lower bounds. We give a
sufficient condition for a lower bound of $\frakM$ to be maximal and
prove that it is also necessary if $\frakM$ is finite. If
$\dim\mathscr H <\infty$, our result coincides with the equivalence
(i) $\Leftrightarrow$ (ii) of Theorem~1 of \cite{GaubertStott2017}. We
discuss the existence of a maximal lower bound, which is equal to one
of the operators of $\frakM$ on a prescribed vector and obtain
generalizations of Stott's results concerning the set $\frakM$ of two
matrices \cite{Stott2016arxiv}.
% We also propose an algorithm to
% compute a positive maximal lower bound of a set of positive matrices.

The short Section~\ref{sec5} contains some additional comments on Stott's
paper. In particular, we give an alternative proof of one of Stott's
main results, which points out the role of the indefinite metric
generated by the difference of the two matrices of $\frakM$.

Since for projections $P$ and $Q$, the inequality $P\leq Q$ is
equivalent to the inclusion $\mathscr R(P)\subseteq\mathscr R(Q)$,
\cf \cite[Prop.~2.5.2]{KadisonRingrose1983}, Theorem~\ref{thm1.1} can
be immediately extended from a set of two projections to an
arbitrary  set of projections. More interesting are generalizations of
Theorems~\ref{thm1.2} and \ref{thm1.3}. Some of them are derived in
Section~\ref{sec6}. We extend Theorem~\ref{thm1.2} to an arbitrary set
$\frakM$ of pairwise commuting operators. Under the condition
$\dim\mathscr H<\infty$, an amplification of Theorem~\ref{thm1.3} to
an arbitrary finite  set is given, a generalization of Moreland's and
Gudder's theorem \cite{MorelandGudder1999}.

\section{Elementary properties of greatest lower bound 
and maximal lower bounds}
\label{sec2}

Let $\frakL$ be a set, which is partially ordered with respect to an
order relation denoted by $\leq$. An element $G$ of $\frakL$ is called
the \emph{greatest} element of $\frakL$ if and only if $L\leq G$ for all
$L\in\frakL$. An element $M$ of $\frakL$ is called \emph{maximal}
element if and only if $L\in\frakL$ and $M\leq L$ imply that $M=L$. It
is obvious that neither the greatest element nor a maximal element need
to exist and that the greatest element is unique if it exists. The
greatest element of $\frakL$ will be denoted by $G:=G(\frakL)$ and the
set of maximal elements by $\Max\frakL$.

Let $\mathscr H$ be a Hilbert space over $\mathbb C$ and $\frakS$ be
the space of bounded self-adjoint linear operators on $\mathscr H$,
endowed with L\"owner's partial order.

Let us state an important result, \cf \cite[Thm.~3.2]{Shmulyan1959},
which is useful in the integration theory with respect to a
matrix-valued measure \cite{Shmulyan1959} and in multivariate
prediction theory \cite{Klotz2002}. It was also mentioned in
\cite[Appendix II]{Dixmier1981} with reference to an earlier
dissertation by Vigier.

\begin{theorem}
\label{thm2.1a}
Let $\frakM$ be a nonempty subset of $\frakS$ such that $A\leq S$ for
some $S\in\frakS$ and all $A\in\frakM$. If for all $A,B\in\frakM$
there exists $C\in\frakM$ with $A\leq C$ and $B\leq C$, the greatest
element $G(\frakM)$ of $\frakM$ exists and
\begin{equation}
\label{eq2.1a}
\bigl(G(\frakM)u,u\bigr)=\sup\bigl\{(Au,u) : A\in\frakM\bigr\},
\; u\in\mathscr H.
\end{equation}
\end{theorem}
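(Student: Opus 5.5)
Define a quadratic form by the supremum, turn it into an operator, and then check that this operator is the greatest element and belongs to $\frakM$.

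\emph{Step 1 (the limiting form).} For $u\in\mathscr H$ set $q(u):=\sup\{(Au,u): A\in\frakM\}$. Since $(Au,u)\leq(Su,u)$, we have $q(u)\leq(Su,u)<\infty$. Fix $A_0\in\frakM$. The directedness hypothesis lets us restrict to $A\geq A_0$ without changing the supremum. Indeed, for any $A$ there is $C\geq A, A_0$ with $(Cu,u)\geq(Au,u)$. Hence $(A_0u,u)\leq q(u)\leq(Su,u)$.

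\emph{Step 2 (sesquilinear form and operator).} Define $t_A(u,v):=(Au,v)$ and show that the net $\{t_A\}$, indexed by the directed set $\frakM$, converges for every pair $u,v$. Convergence of $(Au,u)$ along the net follows from monotonicity: directedness together with boundedness gives convergence to $q(u)$. Polarization then gives convergence of $(Au,v)$ to a limit $t(u,v)$. The limit is sesquilinear and Hermitian, and $|t(u,u)|\leq\max(\|A_0\|,\|S\|)\|u\|^2$. So by the Riesz representation theorem for bounded forms, $t(u,v)=(Gu,v)$ for a unique $G\in\frakS$, and $(Gu,u)=q(u)$.

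For the net convergence, given $\varepsilon>0$ pick $A_1$ with $(A_1u,u)>q(u)-\varepsilon$. Then every $A\geq A_1$ satisfies $q(u)-\varepsilon<(Au,u)\leq q(u)$. This is exactly net convergence along the order of $\frakM$.

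\emph{Step 3 (greatest element).} Clearly $A\leq G$ for all $A\in\frakM$, by \eqref{eq2.1a}.

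The main obstacle is that, as stated, ``greatest element of $\frakM$'' would require $G\in\frakM$. That is false in general: take an increasing sequence of projections whose ranges increase strictly; its supremum is not attained. I therefore expect the intended meaning to be that $G$ is the least upper bound (supremum) of $\frakM$ in $\frakS$, that is, the ``greatest element'' in the dual sense. This is Vigier's theorem.

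Under that reading, the last step is short. If $T\in\frakS$ satisfies $A\leq T$ for all $A\in\frakM$, then $(Gu,u)=\sup_A(Au,u)\leq(Tu,u)$, so $G\leq T$. Hence $G$ is the least upper bound and formula \eqref{eq2.1a} holds. One could also note that $A\to G$ in the strong-operator topology, using $\|(G-A)^{1/2}\|\leq\|G-A_0\|^{1/2}$ for $A\geq A_0$ and $\|(G-A)u\|^2\leq\|G-A\|\,((G-A)u,u)$. This is not needed for the statement itself.
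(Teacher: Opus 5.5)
Your argument is correct. The paper gives no proof of its own: it only cites {\v S}mul'jan (Thm.~3.2) and Vigier's theorem via Dixmier, and what you wrote is the standard proof of that cited result. The one step you leave implicit, passing from $|t(u,u)|\le c\|u\|^{2}$ to $|t(u,v)|\le c\|u\|\|v\|$, is the usual polarization estimate for Hermitian forms; it also follows at once from $\|A\|\le\max(\|A_{0}\|,\|S\|)$ for $A\ge A_{0}$.

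You are also right that ``greatest element of $\frakM$'' cannot be meant literally. Already $\{(1-\tfrac{1}{n})I : n\in\mathbb N\}$ is a counterexample in every dimension. The least-upper-bound reading is exactly how the paper uses the result. In the Zorn argument of Theorem~\ref{thm2.1}, only an upper bound in $\widetilde{\frakL}$ for a chain is needed. In Theorem~\ref{thm6.3a}, $-M$ is only claimed to lie in the weak-operator closure of $\frakN$, and your strong-convergence remark supplies exactly that.
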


Let $\frakM$ be a nonempty subset of $\frakS$. An operator
$L\in\frakS$ is called a \emph{lower bound} of $\frakM$ if $L\leq A$
for all $A\in\frakM$. The set of all lower bounds of $\frakM$ is
denoted  by $\frakL:=\frakL(\frakM)$. If the set $\frakL$ is not
empty,  the set $\frakM$ is called \emph{bounded from below}.
Throughout the rest of the paper, \emph{the symbol $\frakM$
stands for a nonempty subset of $\frakS$, which is bounded from
below}. If we speak of $\frakM$ as a set of matrices, this will mean
that the underlying Hilbert space has finite dimension. It is common
in literature to call the greatest lower bound of $\frakM$ the
\emph{infimum} of $\frakM$, and we shall set $\inf\frakM:=G(\frakL)$.

\begin{theorem}
\label{thm2.1}
For arbitrary $L\in\frakL$, there exists $M\in\Max\frakL$ with $L\leq
M$. In particular, the set $\Max\frakL$ is not empty. It is a
singleton  if and only if\/ $\inf\frakM$ exists. In this case, $\Max\frakL=\{\inf\frakM\}$.
\end{theorem}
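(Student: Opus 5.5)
The plan is to use Zorn's lemma on the set $\frakL_L:=\{K\in\frakL : L\leq K\}$, which is nonempty because $L\in\frakL_L$. I need to show that every chain in $\frakL_L$ has an upper bound in $\frakL_L$, and Theorem~\ref{thm2.1a} supplies this.

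Let $\mathfrak C\subseteq\frakL_L$ be a nonempty chain. Fix any $A_0\in\frakM$, which exists because $\frakM$ is nonempty. Then $K\leq A_0$ for all $K\in\mathfrak C$, so $\mathfrak C$ is bounded from above. Any two elements of a chain are comparable, so the larger of them dominates both, and the directedness hypothesis of Theorem~\ref{thm2.1a} holds. Hence $G(\mathfrak C)$ exists and
\[
\bigl(G(\mathfrak C)u,u\bigr)=\sup\bigl\{(Ku,u):K\in\mathfrak C\bigr\},\quad u\in\scrH .
\]
Since $(Ku,u)\leq(Au,u)$ for every $A\in\frakM$ and $K\in\mathfrak C$, the supremum formula gives $G(\mathfrak C)\leq A$, so $G(\mathfrak C)\in\frakL$. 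Also $L\leq K\leq G(\mathfrak C)$ for any $K\in\mathfrak C$. Thus $G(\mathfrak C)\in\frakL_L$ is an upper bound of $\mathfrak C$. The empty chain is handled by $L$ itself.

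Zorn's lemma then gives a maximal element $M$ of $\frakL_L$. This $M$ is maximal in $\frakL$ as well: if $K\in\frakL$ and $M\leq K$, then $L\leq K$, so $K\in\frakL_L$ and therefore $K=M$. This proves the first statement. Nonemptiness of $\Max\frakL$ follows because $\frakL\neq\emptyset$ by the standing assumption that $\frakM$ is bounded from below.

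For the equivalence, suppose first that $\inf\frakM=G$ exists. Then $G\in\Max\frakL$, since any $K\in\frakL$ with $G\leq K$ also satisfies $K\leq G$. If $M\in\Max\frakL$, then $M\leq G$ and maximality force $M=G$, so $\Max\frakL=\{\inf\frakM\}$. Conversely, suppose $\Max\frakL=\{M\}$. For each $L\in\frakL$ the first part yields some $M'\in\Max\frakL$ with $L\leq M'$, and necessarily $M'=M$. Hence $M$ is the greatest element of $\frakL$, i.e.\ $M=\inf\frakM$.

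The only nontrivial step is the existence of upper bounds for chains, and Theorem~\ref{thm2.1a} settles it directly. The one point to check is that the quadratic-form description of $G(\mathfrak C)$ is what lets the bound $K\leq A$ pass to the limit.
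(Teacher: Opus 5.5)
Your proof is correct and follows the paper's argument: Zorn's lemma on $\{K\in\frakL : L\leq K\}$, with chains bounded above by the supremum from Theorem~\ref{thm2.1a}, and then the observation that a unique maximal element dominates every lower bound. You also spell out, via the quadratic-form formula, why that supremum remains in $\frakL$; the paper leaves this step implicit.
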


\begin{proof}
For $L\in\frakL$, set $\widetilde{\frakL}:=\{S\in\frakL : S\geq L\}$.
Applying Theorem~\ref{thm2.1a} to a totally ordered subset of
$\widetilde{\frakL}$, we obtain that $\widetilde{\frakL}$ satisfies
the conditions of Zorn's lemma and the existence of
$M\in\Max\widetilde{\frakL}$ with $L\leq M$ follows. Clearly, $M$ is a
maximal element of $\frakL$ as well, which proves the first assertion.
The second assertion is obvious. Finally, let $\Max\frakL=\{M\}$ be a
singleton. If $L\in\frakL$, according to the first assertion just
proved  there exists $\widetilde{M}\in\Max\frakL$ with
$L\leq\widetilde{M}$.  Since $M$ is the only maximal element of
$\frakL$, it follows $M=\widetilde{M}\geq L$, $L\in\frakL$, which
yields $M=\inf\frakM$. 
\end{proof}

Let us summarize the following simple facts
\nameref{fa2.a}-\nameref{fa2.g}  on the infimum and the set of maximal
lower bounds of a set $\frakM$.

%\begin{enumerate}[align=left,
%  labelindent= 0pt,
%  labelwidth = 1em,
%  labelsep   = 0.5em,
%  itemindent = 1.5em,
%  leftmargin = 0pt,
%  parsep     = 0.5ex
%  ]
  
% \begin{itemize}[align=left,
%   left = 0pt,
%  leftmargin = *
%  ]

\begin{fact*}[(2.a)]
\label{fa2.a} 
If $\frakN$ is a nonempty subset of $\frakM$, clearly
$\frakL(\frakM)\subseteq\frakL(\frakN)$, which implies that
$\inf\frakM\leq\inf\frakN$ if both infima exist. In general, there
is little interrelation between the existence of $\inf\frakM$ and the
existence of $\inf\frakN$. This can be seen with the aid of simple
examples constructed on the basis of Kadison's theorem. Let
$\frakM:=\{A,B,C\}$ and $\frakN:=\{A,B\}$. If $A\leq B$ and $A$ and $C$
are not comparable, $\inf\frakN$ exists and $\inf\frakM$ does not. If
$A$ and $B$ are not comparable and $C\leq A$, $C\leq B$, then
$\inf\frakM=C$ exists and $\inf\frakN$ does not.
\end{fact*}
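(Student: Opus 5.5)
The statement (2.a) bundles three claims: the inclusion $\frakL(\frakM)\subseteq\frakL(\frakN)$, the inequality $\inf\frakM\leq\inf\frakN$ whenever both infima exist, and the two examples. I will take them in that order.

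The inclusion is immediate from the definition. If $L\leq A$ for all $A\in\frakM$, then in particular $L\leq A$ for all $A\in\frakN$, since $\frakN\subseteq\frakM$.

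For the inequality, suppose both infima exist. Then $\inf\frakM\in\frakL(\frakM)\subseteq\frakL(\frakN)$. Since $\inf\frakN$ is the greatest element of $\frakL(\frakN)$, it follows that $\inf\frakM\leq\inf\frakN$.

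For the examples, I use Kadison's theorem: for two operators $X,Y$, the infimum $\inf\{X,Y\}$ exists if and only if $X$ and $Y$ are comparable.

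\emph{First example.} Let $A\leq B$ with $A$ and $C$ not comparable.
\begin{itemize}
\item Every lower bound of $\frakN=\{A,B\}$ is $\leq A$, and $A\in\frakL(\frakN)$, so $\inf\frakN=A$.
\item Since $A\leq B$, an operator is a lower bound of $\frakM=\{A,B,C\}$ exactly when it is a lower bound of $\{A,C\}$. Hence $\frakL(\frakM)=\frakL(\{A,C\})$.
\item By Kadison's theorem, $\frakL(\{A,C\})$ has no greatest element, so $\inf\frakM$ does not exist.
\end{itemize}
A concrete instance is $B=A$, or $B=A+I$, with $A=\mathrm{diag}(1,0)$ and $C=\mathrm{diag}(0,1)$. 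These $A$ and $C$ are incomparable.

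\emph{Second example.} Let $A,B$ be incomparable and $C\leq A$, $C\leq B$.
\begin{itemize}
\item Here $C\in\frakL(\frakM)$, and every $L\in\frakL(\frakM)$ satisfies $L\leq C$. Hence $\inf\frakM=C$.
\item By Kadison's theorem, $\inf\frakN$ does not exist.
\end{itemize}
A concrete instance is $C=-I$ together with the incomparable pair $A,B$ above.

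No step presents a real obstacle. The only nontrivial input is Kadison's theorem, which is assumed from the introduction.
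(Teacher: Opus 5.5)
Your proposal is correct and follows the same route as the paper, which simply asserts these facts via the definitions and Kadison's theorem. You make the implicit steps explicit. In the first example, $A\leq B$ reduces $\frakL(\frakM)$ to $\frakL(\{A,C\})$, the same reduction as in (2.b). In the second, $C\in\frakL(\frakM)\cap\frakM$ forces $\inf\frakM=C$, as in (2.e). Your concrete instances $A=\mathrm{diag}(1,0)$, $C=\mathrm{diag}(0,1)$, and $B=\mathrm{diag}(0,1)$ with $C=-I$, are valid.
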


\begin{fact*}[(2.b)]
\label{fa2.b} 
If $\frakM$ and $\frakN$ are nonempty subsets of $\frakS$ such that
$\inf\frakM$ and $\inf\frakN$ exist, then
\begin{equation}
\label{eq2.2}
\frakL(\frakM\cup\frakN) =\frakL\bigl(\{\inf\frakM,\inf\frakN\}\bigr).
\end{equation}
Indeed, $L\in\frakL(\frakM\cup\frakN)$ if and only if $L\leq A$,
$A\in\frakM$, and $L\leq B$, $B\in\frakN$,
which
in turn is equivalent
to $L\leq\inf\frakM$ and $L\leq\inf\frakN$ or, what is the same, 
\[
  L\in \frakL\bigl(\{\inf\frakM,\inf\frakN\}\bigr).
\]
From \eqref{eq2.2} it can be concluded that if $\inf\frakM$ and
$\inf\frakN$ exist, the infimum $\inf(\frakM\cup\frakN)$ exists if
and only if $\inf\frakM$ and $\inf\frakN$ are comparable. If only one
of the infima $\inf\frakM$ and $\inf\frakN$ exists, there are
examples, where $\inf(\frakM\cup\frakN)$ exists as well as examples,
where $\inf(\frakM\cup\frakN)$ does not exist, \cf \nameref{fa2.a}. 
It is an open question, whether $\inf(\frakM\cup\frakN)$ can exist if
both $\inf\frakM$ and $\inf\frakN$ do not exist.
\end{fact*}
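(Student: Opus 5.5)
The statement (2.b) has two parts: the identity \eqref{eq2.2} for $\frakL(\frakM\cup\frakN)$, and the consequence that, when both $\inf\frakM$ and $\inf\frakN$ exist, $\inf(\frakM\cup\frakN)$ exists if and only if $\inf\frakM$ and $\inf\frakN$ are comparable. The examples for the case where only one infimum exists are read off from \nameref{fa2.a}. For \eqref{eq2.2} I would use only the defining property of the infimum: $L\leq\inf\frakM$ holds if and only if $L\leq A$ for all $A\in\frakM$. One inclusion uses that $\inf\frakM$ is the greatest lower bound; the other uses that $\inf\frakM$ is itself a lower bound together with transitivity of L\"owner's order. The same holds for $\frakN$. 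Combining the two equivalences gives $L\in\frakL(\frakM\cup\frakN)$ if and only if $L\in\frakL(\{\inf\frakM,\inf\frakN\})$.

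For the consequence, \eqref{eq2.2} shows that $\frakM\cup\frakN$ and $\{\inf\frakM,\inf\frakN\}$ have the same set of lower bounds. Their infima are both defined as $G$ of that common set, so one exists if and only if the other does, and then they coincide. It remains to show that a two-element set $\{A,B\}$ has an infimum if and only if $A$ and $B$ are comparable. This is Kadison's theorem \cite{Kadison1951}, quoted in the introduction, and I would invoke it directly.

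The sufficiency direction is trivial: if, say, $A\leq B$, then $A$ is a lower bound of $\{A,B\}$ and dominates every lower bound, so $\inf\{A,B\}=A$. The nontrivial part is the necessity direction of Kadison's theorem. Since the paper takes that theorem as known, there is no real obstacle here; the only care needed is to quote it in exactly the form ``infimum exists $\Rightarrow$ comparable''.

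Finally, the remark about the case where only one of $\inf\frakM$, $\inf\frakN$ exists is settled by the examples in \nameref{fa2.a}. Take $\frakM=\{A,B\}$ with $A$ and $B$ incomparable, so $\inf\frakM$ does not exist, and $\frakN=\{C\}$, so $\inf\frakN=C$. If $C\leq A$ and $C\leq B$, then $\inf(\frakM\cup\frakN)=C$ exists. If instead $B\leq C$ and $A$, $C$ are incomparable, one checks via Kadison's theorem that the infimum of the union does not exist. The remaining question, whether $\inf(\frakM\cup\frakN)$ can exist when neither $\inf\frakM$ nor $\inf\frakN$ exists, is stated as open and is not to be proved.
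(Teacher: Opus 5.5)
Your proposal is correct and follows the paper's own route: you derive \eqref{eq2.2} from the defining property of the infimum, get the comparability criterion from Kadison's theorem applied to $\{\inf\frakM,\inf\frakN\}$, and read the one-sided examples off \nameref{fa2.a}. In your second example the hypothesis that $A$ and $C$ are incomparable is superfluous, since $B\leq C$ already gives $\frakL(\{A,B,C\})=\frakL(\{A,B\})$ and the assumed incomparability of $A$ and $B$ rules out an infimum.
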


\begin{fact*}[(2.c)] 
\label{fa2.c}
For all $S\in\frakS$,
\begin{equation}
  \label{eq2.3}
  \frakL(S+\frakM)=S+\frakL(\frakM).
\end{equation}  
For all bounded and boundedly invertible linear operators $T$ on
$\mathscr H$, 
\begin{equation}
  \label{eq2.4}
  \frakL(T\frakM T^{\ast}) = T \frakL(\frakM) T^{\ast},
\end{equation}
where $T^{\ast}$ denotes the adjoint of $T$.
\end{fact*}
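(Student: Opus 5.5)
Both identities are proved by showing two inclusions, using only that the maps involved are order-preserving bijections of $\frakS$ onto itself.

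For \eqref{eq2.3}, fix $S\in\frakS$ and note that translation $X\mapsto S+X$ is a bijection of $\frakS$ with inverse $X\mapsto X-S$. Löwner's order is translation invariant, since
\[
(Lu,u)\leq(Au,u)\iff\bigl((S+L)u,u\bigr)\leq\bigl((S+A)u,u\bigr),\quad u\in\mathscr H .
\]
Hence $L\leq A$ for all $A\in\frakM$ if and only if $S+L\leq S+A$ for all $A\in\frakM$. This gives $S+\frakL(\frakM)\subseteq\frakL(S+\frakM)$. For the reverse inclusion, take $K\in\frakL(S+\frakM)$ and put $L:=K-S\in\frakS$. Then $L\leq A$ for every $A\in\frakM$, so $L\in\frakL(\frakM)$ and $K=S+L$.

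For \eqref{eq2.4}, let $T$ be bounded with bounded inverse. The map $X\mapsto TXT^{\ast}$ sends $\frakS$ into $\frakS$, because $(TXT^{\ast})^{\ast}=TXT^{\ast}$. It is a bijection of $\frakS$, with inverse $X\mapsto T^{-1}X(T^{-1})^{\ast}=T^{-1}X(T^{\ast})^{-1}$, using $(T^{\ast})^{-1}=(T^{-1})^{\ast}$.

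The key step is that this congruence preserves order in both directions. If $L\leq A$, then for every $u\in\mathscr H$,
\[
(TLT^{\ast}u,u)=(LT^{\ast}u,T^{\ast}u)\leq(AT^{\ast}u,T^{\ast}u)=(TAT^{\ast}u,u).
\]
So $TLT^{\ast}\leq TAT^{\ast}$. Applying the same argument to $T^{-1}$ gives the converse implication. Note that $T^{\ast}$ maps $\mathscr H$ onto $\mathscr H$, because $T^{\ast}$ is invertible, so every vector arises as some $T^{\ast}u$.

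With order preserved both ways, the argument is the same as for \eqref{eq2.3}. If $L\in\frakL(\frakM)$, then $TLT^{\ast}\leq TAT^{\ast}$ for all $A\in\frakM$, so $T\frakL(\frakM)T^{\ast}\subseteq\frakL(T\frakM T^{\ast})$. Conversely, if $K\in\frakL(T\frakM T^{\ast})$, set $L:=T^{-1}K(T^{\ast})^{-1}\in\frakS$. Then $L\leq A$ for all $A\in\frakM$, so $L\in\frakL(\frakM)$ and $K=TLT^{\ast}$.

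The only point needing care is the bounded invertibility of $T$. It is exactly what makes the congruence a bijection of $\frakS$ that reflects the order, and not merely one that preserves it. Everything else is routine.
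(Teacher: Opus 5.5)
Your proposal is correct. The paper states (2.c) without proof as a simple fact, and your argument is the routine verification it leaves implicit: translation by $S$ and the congruence $X\mapsto TXT^{\ast}$ with $T$ boundedly invertible are bijections of $\frakS$ that preserve Löwner's order in both directions, and bounded invertibility of $T$ is used only for the reverse inclusion.
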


\begin{fact*}[(2.d)]
\label{fa2.d}  
Recall that the strong-operator and weak-operator topologies are the
coarsest topologies such that for all $u,v\in\mathscr H$, the
functions  $S\mapsto Su$, $S\in\frakS$, and $S\mapsto (Su,v)$,
$S\in\frakS$, resp., are continuous. Obviously for a weak-operator
dense subset $\frakN$ of $\frakM$, the relation
$\frakL(\frakM)=\frakL(\frakN)$ is true.
\end{fact*}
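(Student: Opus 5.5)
The claim in (2.d) is that $\frakL(\frakM)=\frakL(\frakN)$ whenever $\frakN\subseteq\frakM$ is weak-operator dense in $\frakM$. I would prove the two inclusions separately.

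The inclusion $\frakL(\frakM)\subseteq\frakL(\frakN)$ holds for any nonempty subset, as already noted in \nameref{fa2.a}. So the only thing to show is $\frakL(\frakN)\subseteq\frakL(\frakM)$.

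Fix $L\in\frakL(\frakN)$, $A\in\frakM$ and $u\in\mathscr H$. We must show $(Lu,u)\leq(Au,u)$. The function
\[
\varphi_u\colon S\mapsto (Su,u),\quad S\in\frakS,
\]
is weak-operator continuous by the definition of the weak-operator topology (take $v=u$). Hence the set
\[
\mathfrak C_u:=\bigl\{S\in\frakS : (Su,u)\geq (Lu,u)\bigr\}=\varphi_u^{-1}\bigl([(Lu,u),\infty)\bigr)
\]
is weak-operator closed in $\frakS$.

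Since $L\in\frakL(\frakN)$, we have $\frakN\subseteq\mathfrak C_u$. Density of $\frakN$ in $\frakM$ means that $\frakM$ lies in the weak-operator closure of $\frakN$. As $\mathfrak C_u$ is closed and contains $\frakN$, it contains that closure, so $A\in\mathfrak C_u$, i.e.\ $(Lu,u)\leq(Au,u)$.

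Since $u$ and $A$ were arbitrary, $L\leq A$ for all $A\in\frakM$, that is, $L\in\frakL(\frakM)$.

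Nothing here is a real obstacle. The one point needing care is to read density in the topological sense, via closures or equivalently nets, rather than sequences, because the weak-operator topology is not metrizable on unbounded sets. The closed-set formulation above sidesteps this: if $A$ is the limit of a net $(A_\alpha)$ in $\frakN$, then $(Lu,u)\leq(A_\alpha u,u)$ passes to the limit in $\alpha$.
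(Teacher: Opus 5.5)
Your proof is correct and is exactly the argument the paper leaves implicit behind its ``Obviously'': (2.a) gives $\frakL(\frakM)\subseteq\frakL(\frakN)$, and weak-operator continuity of $S\mapsto(Su,u)$ shows that each inequality $(Lu,u)\leq(Su,u)$ valid on $\frakN$ persists on its weak-operator closure, which contains $\frakM$. Phrasing the density step through closed sets rather than sequences is the right choice. Your aside on metrizability is a little loose, since bounded sets are weak-operator metrizable only for separable $\mathscr H$, but nothing in the proof depends on it.
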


\begin{fact*}[(2.e)]
\label{fa2.e}  
If $G\in\frakL\cap\frakM$, then $L\leq G\leq A$, $L\in\frakL$,
$A\in\frakM$. It follows that $\inf\frakM$ exists and is equal to
$G$. In particular, the cardinality of $\frakL\cap\frakM$ is at most
one. If $\frakM$ is countable and weak-operator compact, then
conversely, the existence of $\inf\frakM$ yields
$\frakL\cap\frakM=\{\inf\frakM\}$, \cf Corollary~\ref{cor3.4} below.
\end{fact*}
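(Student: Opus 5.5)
The first two assertions of (2.e) are immediate from the definitions, so the plan concentrates on the converse for countable weak-operator compact $\frakM$, which the statement attributes to Corollary~\ref{cor3.4}.

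\emph{First assertions.} If $G\in\frakL\cap\frakM$ and $L\in\frakL$, then $L\leq G$ because $L$ is a lower bound of $\frakM$ and $G\in\frakM$. Also $G\leq A$ for all $A\in\frakM$ because $G\in\frakL$. Hence $G$ is the greatest element of $\frakL$, that is, $G=\inf\frakM$. Uniqueness of the greatest element then shows that $\frakL\cap\frakM$ has at most one element.

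\emph{Converse.} Assume $\frakM$ is countable and weak-operator compact, and that $G:=\inf\frakM$ exists. We must show $G\in\frakM$. Suppose not. I would try to build a lower bound $L$ of $\frakM$ with $L\not\leq G$, which contradicts the definition of $G$. Using \eqref{eq2.3}, replace $\frakM$ by $\frakM-G$. Then $\inf\frakM=0$, every $A\in\frakM$ is positive, and $0\notin\frakM$. Weak-operator compactness and $0\notin\frakM$ give, for each $A\in\frakM$, a weak-operator neighbourhood of $0$ disjoint from $\frakM$. It is determined by finitely many vectors. Hence compactness should yield a uniform witness: finitely many vectors $u_1,\dots,u_k$ and $\varepsilon>0$ such that every $A\in\frakM$ satisfies $(Au_j,u_j)\geq\varepsilon$ for some $j$.

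For positive $A$, the quadratic form controls the off-diagonal entries, so the neighbourhood can be taken in terms of diagonal values. Countability, $\frakM=\{A_1,A_2,\dots\}$, is then used to construct a nonzero positive perturbation. The idea is a rank-one operator $L=\delta\,(\cdot,w)w$, with $w$ a suitable combination of the $u_j$, or a Kadison-type construction $L=\sum_n 2^{-n}L_n$. Here $L_n$ exploits the fact that $A_n\neq 0$ and is positive, so some vector $v_n$ has $(A_nv_n,v_n)>0$. The coefficients must be chosen so that $L\leq A_n$ for all $n$ simultaneously while $L\neq0$ and $L\geq 0$. Then $L\in\frakL$ but $L\not\leq 0=G$, which is the contradiction we want.

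\emph{Main obstacle.} The hard step is making the bound $L\leq A_n$ hold uniformly in $n$. A rank-one $L$ along $w$ satisfies $L\leq A$ iff $w\in\mathscr R(A^{\sfrac12})$ and $\delta\|A^{-\sfrac12}w\|^2\leq1$. So I would need one $w$ lying in all the ranges $\mathscr R(A_n^{\sfrac12})$ with uniformly bounded $\|A_n^{-\sfrac12}w\|$. This is exactly where the two-operator Kadison argument applies, and where countability plus compactness must replace finiteness. For this step I would first prove Kadison's theorem in the form of the paper's Section~\ref{sec3} main theorem, namely that $\inf\frakM$ exists only if some element of $\frakM$ is a lower bound. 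The claim then follows directly from that result, which is why the statement defers to Corollary~\ref{cor3.4}.
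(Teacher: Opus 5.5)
Your argument for the first assertions is correct; it is the intended trivial one. The converse, however, is not proved. Your final step proposes to ``first prove'' that $\inf\frakM$ exists only if some element of $\frakM$ is a lower bound. For countable weak-operator compact $\frakM$ this is exactly Corollary~\ref{cor3.4}, i.e.\ the very claim at issue, so the reduction is circular.

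The construction you sketch before that cannot be completed either. You try to build a \emph{nonzero positive} lower bound of $\frakM-G$ using only that this set consists of positive operators, is countable and compact, and does not contain $0$. That $G$ is the \emph{greatest} lower bound is used only for the final contradiction. These properties do not yield such an operator. For $\frakM=\bigl\{\begin{psmallmatrix}1&0\\0&0\end{psmallmatrix},\begin{psmallmatrix}0&0\\0&1\end{psmallmatrix}\bigr\}$ the square-root ranges intersect trivially, so by Lemma~\ref{lem4.1} the only positive lower bound is $0$, although $0\notin\frakM$. Kadison's device (Lemma~\ref{lem3.1}) instead produces a lower bound $\begin{psmallmatrix}l_{1}&0\\0&l_{2}I\end{psmallmatrix}$ with $l_{1}>0$ and $l_{2}<0$ of large modulus. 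This operator is not positive, and it exists only because $\frakM$ is \emph{bounded}, a property you never establish.

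Your finite ``uniform witness'' does not help either. It also holds for the uncountable compact set of Example~\ref{ex3.5}~(ii), with $u_{1},u_{2}$ an orthonormal basis and $\varepsilon=\tfrac12$, where $\inf\frakM=0\notin\frakM$. Kadison's lemma needs a single vector $u$ with $\inf\{(Au,u):A\in\frakM\}>0$.

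The missing chain is how the paper proves Corollary~\ref{cor3.4}. Weak-operator compactness gives weak-operator boundedness, hence norm boundedness by the uniform boundedness principle. For bounded $\frakM$, Lemma~\ref{lem3.1} applied to $\frakM-G$ (via \eqref{eq2.3}) gives Theorem~\ref{thm3.3}, i.e.\ $(Gv,v)=\inf\{(Av,v):A\in\frakM\}$ for every $v\in\scrH$. Compactness then supplies, for each $v$, some $A\in\frakM$ with $(Av,v)=(Gv,v)$, whence $v\in\mathscr N(A-G)$ since $A-G\geq0$. Thus $\scrH=\bigcup_{A\in\frakM}\mathscr N(A-G)$ is a countable union of closed subspaces. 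Baire's theorem forces $\mathscr N(A-G)=\scrH$ for some $A$, i.e.\ $G=A\in\frakM$. Countability is used exactly here, through Baire's category theorem, and not through a weighted series $\sum_{n}2^{-n}L_{n}$.
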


\begin{fact*}[(2.f)]
\label{fa2.f}
It is not hard to see that $\frakL$ is a convex and
weak-operator closed, hence, strong-operator closed set, cf.\
\cite[Thm.~5.1.2]{KadisonRingrose1983}. Similarly, the set $\frakM$
can be replaced by its convex weak-operator closure without altering
its set of lower bounds.
\end{fact*}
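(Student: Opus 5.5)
The statement consists of two claims: that $\frakL$ is convex and weak-operator closed (hence strong-operator closed), and that replacing $\frakM$ by its convex weak-operator closure leaves $\frakL$ unchanged. The plan is to write $\frakL$ as an intersection of sets defined by single vectors,
\[
\frakL=\bigcap_{A\in\frakM}\ \bigcap_{u\in\mathscr H}\bigl\{L\in\frakS : (Lu,u)\leq (Au,u)\bigr\},
\]
and to note that each set in this intersection is a closed half-space. Indeed, for fixed $u$ the functional $L\mapsto(Lu,u)$ is real-valued on $\frakS$ and $\mathbb R$-linear. It is also weak-operator continuous, by the definition of that topology in \nameref{fa2.d} with $v=u$. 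Hence each set $\{L:(Lu,u)\leq(Au,u)\}$ is convex and weak-operator closed, and so is any intersection of such sets. This gives both properties of $\frakL$.

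Strong-operator closedness then follows because the strong-operator topology is finer than the weak-operator topology: $S\mapsto (Su,v)$ is the composition of $S\mapsto Su$ with the continuous map $w\mapsto (w,v)$. Every weak-operator closed set is therefore strong-operator closed. The reference to \cite[Thm.~5.1.2]{KadisonRingrose1983} is only needed if one wants the stronger fact that convex sets have equal weak and strong closures; the implication used here is elementary.

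For the second claim, let $\widehat{\frakM}$ denote the convex weak-operator closure of $\frakM$. Since $\frakM\subseteq\widehat{\frakM}$, we have $\frakL(\widehat{\frakM})\subseteq\frakL(\frakM)$. For the reverse inclusion, fix $L\in\frakL(\frakM)$ and consider the upper set
\[
\mathfrak U_L:=\{A\in\frakS : L\leq A\}=\bigcap_{u\in\mathscr H}\{A\in\frakS:(Au,u)\geq(Lu,u)\}.
\]
By the same half-space argument, $\mathfrak U_L$ is convex and weak-operator closed. It contains $\frakM$, so it contains $\widehat{\frakM}$, which gives $L\in\frakL(\widehat{\frakM})$. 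This also recovers \nameref{fa2.d}.

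The one point that needs care is what "convex weak-operator closure" means. If it is the smallest convex, weak-operator closed set containing $\frakM$, the argument above is immediate. If instead it means the weak-operator closure of the convex hull, that set is also convex (the closure of a convex set in a locally convex topology is convex), so it is the same set and the argument still applies. Apart from this, the proof is just continuity and linearity of $L\mapsto(Lu,u)$.
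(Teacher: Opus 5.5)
Your proof is correct and is exactly the routine argument the paper leaves as ``not hard to see'': both $\frakL$ and the upper sets $\{A\in\frakS : L\leq A\}$ are intersections of closed half-spaces cut out by the weak-operator continuous $\mathbb R$-linear functionals $S\mapsto(Su,u)$. You are also right that passing from weak-operator to strong-operator closedness is elementary; \cite[Thm.~5.1.2]{KadisonRingrose1983} (equality of the two closures of a convex set) is needed only for the converse direction.
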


\begin{fact*}[(2.g)]
\label{fa2.g}  
If $\inf\frakM$ does not exist, $\Max\frakL$ is an infinite set. 
\end{fact*}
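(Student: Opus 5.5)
If $\inf\frakM$ does not exist, then by Theorem~\ref{thm2.1} the set $\Max\frakL$ is not a singleton. Since it is nonempty, it contains two distinct elements $M_0\neq M_1$. The plan is to produce a continuum of distinct maximal lower bounds from this pair.

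Consider the segment $L_t:=(1-t)M_0+tM_1$, $t\in[0,1]$. By \nameref{fa2.f}, $\frakL$ is convex, so every $L_t$ lies in $\frakL$. By the first assertion of Theorem~\ref{thm2.1}, for each $t$ there is some $N_t\in\Max\frakL$ with $L_t\leq N_t$. It remains to show that the assignment $t\mapsto N_t$ takes infinitely many values.

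Suppose instead that only finitely many distinct maximal elements $N^{(1)},\dots,N^{(k)}$ occur. Then $[0,1]$ is the union of the sets $T_j:=\{t : L_t\leq N^{(j)}\}$. Each $T_j$ is convex, because $L_t$ is affine in $t$ and the condition $L_t\le N^{(j)}$ is a family of affine inequalities $(L_tu,u)\le(N^{(j)}u,u)$. Each $T_j$ is also closed, since these inequalities are preserved under limits in $t$. So the $T_j$ are closed subintervals covering $[0,1]$.

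Now $0$ lies in some $T_j$. That gives $M_0=L_0\leq N^{(j)}$, and maximality of $M_0$ forces $N^{(j)}=M_0$. Likewise $1\in T_i$ forces $N^{(i)}=M_1$.

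The key step is to show that if $t\in T_j$ with $0<t<1$ and also $0\in T_j$, then $M_1\leq M_0$. Indeed, $0,t\in T_j$ means $M_0\le N^{(j)}=M_0$ and $L_t\leq M_0$. Since $L_t-M_0=t(M_1-M_0)$ and $t>0$, this gives $M_1\leq M_0$. Maximality of $M_1$ then yields $M_1=M_0$, a contradiction. Hence the interval containing $0$ is $\{0\}$ alone. By the symmetric argument, the interval containing $1$ is $\{1\}$ alone.

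The same reasoning applies to any interior point. If $t\in T_j$ with $0<t<1$ and $N^{(j)}=N$, then $L_t\le N$. If $T_j$ also contains a second point $s\neq t$, then $L_s\le N$ as well. In the finite-union situation, some $T_j$ must contain a nondegenerate interval, since finitely many closed sets covering $[0,1]$ cannot all be singletons. I expect this step to be the main obstacle: a single maximal $N$ dominating a whole subsegment $[a,b]$ is not immediately contradictory.

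To overcome it, I would avoid counting the $N_t$ and count directly instead. Set $\Phi(N):=\{t : L_t\le N\}$ for $N\in\Max\frakL$. Since $L_t\to M_0$ as $t\to0$, I would show there are maximal elements $N_t$, $t$ small, that all differ from one another. Suppose $L_t\le N$ and $L_s\le N$ with $0<s<t$. Extending the segment through $L_s$ and $L_t$ toward $M_0$ and combining with $M_0\le$ a maximal element gives no contradiction directly.

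The fallback strategy is therefore cleaner: iterate the basic observation. Given finitely many maximal elements $N^{(1)},\dots,N^{(k)}$ (with $k\ge2$), form the average $L:=\frac1k\sum_j N^{(j)}$, which lies in $\frakL$ by convexity. Choose a maximal $N\geq L$. If $N=N^{(j)}$ for some $j$, then $\sum_{i\ne j}(N^{(i)}-N^{(j)})\le0$. Note this is only a sum condition, so it does not immediately force each $N^{(i)}\le N^{(j)}$.

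I would try to resolve this through the variant with weights. Take $L=\sum_j\lambda_jN^{(j)}$ with all $\lambda_j>0$, and observe that the resulting $N$ satisfies $\sum_j\lambda_j(N^{(j)}-N)\le 0$ for every choice of weights $(\lambda_j)$. Applying this with finitely many $N$ for uncountably many weights, pigeonhole gives a single $N$ working for weights whose convex hull has nonempty interior in the simplex. Since the inequality is affine in $\lambda$, it then holds at all vertices of the simplex, so $N^{(j)}\le N$ for every $j$. Maximality gives $N^{(j)}=N$ for all $j$, contradicting $k\ge2$. Hence $\Max\frakL$ is infinite.
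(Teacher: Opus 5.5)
\textbf{The weighted argument you end with does not work; your abandoned segment argument does.}

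The final, weighted version fails at its last step. Write $\Lambda_N:=\{\lambda\in\Delta:\sum_j\lambda_jN^{(j)}\le N\}$ for the closed convex set of admissible weights in the simplex $\Delta$. You pass from ``$\Lambda_N$ has nonempty interior'' to ``$e_j\in\Lambda_N$ for every vertex $e_j$'', on the grounds that the inequality is affine in $\lambda$. That inference is invalid: an affine inequality holding on an open part of $\Delta$ says nothing about the vertices (consider $\lambda_1\le\frac12$).

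It genuinely fails in your setting, because for $k\ge3$ an operator can dominate a proper convex combination of others without dominating any one of them. Take $N^{(1)}=\mathrm{diag}(1,-2)$, $N^{(2)}=\mathrm{diag}(-2,1)$, $N^{(3)}=0$, which are pairwise incomparable. Then $\Lambda_{N^{(3)}}=\{\lambda\in\Delta:\lambda_1\le2\lambda_2,\ \lambda_2\le2\lambda_1\}$ has interior points and contains $e_3$, but neither $e_1$ nor $e_2$. So for $k\ge3$ this route produces no contradiction. The pigeonhole step before it is also loose when $k\ge3$: uncountably many weights can lie on one segment, so their convex hull need not have interior. Closedness of the $\Lambda_N$ plus Baire would repair that, but not the vertex step.

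You do not need the weighted version. Your first (segment) approach was already complete, and the ``main obstacle'' you identified is not one.

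\begin{itemize}
\item You showed that the unique $T_j$ with $N^{(j)}=M_0$ equals $\{0\}$.
\item Every other $T_j$ is closed in $[0,1]$ and does not contain $0$. Otherwise $M_0\le N^{(j)}$, and maximality of $M_0$ gives $N^{(j)}=M_0$.
\item Hence each such $T_j$ lies in some $[\delta_j,1]$ with $\delta_j>0$.
\item There are finitely many $j$, so no $t\in(0,\min_j\delta_j)$ is covered. This contradicts $\bigcup_jT_j=[0,1]$.
\end{itemize}
Nothing requires any $T_j$ to be a singleton.

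This repaired argument is a continuous version of the paper's proof. The paper isolates your key step as part (i) of its proof: if $M_1\ne M_2$ are maximal and a maximal $M_3$ satisfies $M_3\ge(1-\alpha)M_1+\alpha M_2$ with $\alpha\in(0,1)$, then $M_3\ne M_1,M_2$. Instead of closedness of the $T_j$, it uses an explicit sequence of maximal elements with $M_{n+1}\ge\frac12(M_1+M_n)$. Iterating gives $M_{n+1}\ge(1-2^{-(n+1-k)})M_1+2^{-(n+1-k)}M_k$ for $k=2,\dots,n$, and part (i) then shows by induction that the $M_n$ are pairwise distinct.

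Your version handles all finitely many candidates at once through a compactness-type argument on $[0,1]$. The paper's version is purely order-theoretic and constructs the infinite family explicitly.
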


\begin{proof}
(i) Let $M_{1},M_{2},M_{3}\in\Max\frakL$, $M_{1}\neq M_{2}$, and
$M_{3}\geq(1-\alpha)M_{1}+\alpha M_{2}$ for some $\alpha\in(0,1)$.
Since from the definition of a maximal element immediately follows
that $M_{1}$ and $M_{2}$ are not comparable, we obtain that $M_{3}$
differs from $M_{1}$ and from $M_{2}$. \\
(ii) If $\inf\frakM$ does not exist, by Theorem~\ref{thm2.1} the set
$\Max\frakL$ has at least two elements. Let
$M_{1},M_{2}\in\Max\frakL$, $M_{1}\neq M_{2}$. Since $\frakL$ is
convex, again by Theorem~\ref{thm2.1} one can construct a sequence
$(M_{n})_{n\in\mathbb N}$ of maximal lower bounds satisfying
\begin{equation}
\label{eq2.4a}
M_{n+1}\geq \frac{1}{2}\bigl(M_{1}+M_{n}\bigr),\;n = 2,3,\ldots.
\end{equation}
With the aid of (i) let us show by induction that the elements of this
sequence are pairwise different. Note that \eqref{eq2.4a} yields
\[
M_{n+1} \geq
 \Bigl(1-\frac{1}{2^{n+1-k}}\Bigr)\, M_{1}
  + \frac{1}{2^{n+1-k}}\,M_{k},\; k = 2,\ldots,n.
\]
Therefore, if the operators $M_{1},\ldots,M_{n}$ are pairwise
different by the induction assumption, (i) implies that $M_{n+1}$ is
different from $M_{1}$ and from $M_{k}$, $k=2,\ldots,n$.
\end{proof}

For $S\in\frakS$, denote by $S^{\#}$ its Moore-Penrose inverse. Recall
that $S^{\#}$ is a possibly unbounded self-adjoint operator defined on
$\mathscr N(S)\oplus\mathscr R(S)$ as follows. If $u\in\mathscr N(S)$,
then $S^{\#}u=0$. If $u\in\mathscr R(S)$, then $S^{\#}u=v$, where $v$
is the unique vector from $\overline{\mathscr R(S)}$ satisfying
$Sv=u$. We recall an operator version of Albert's positivity criterion
for $2\times 2$ block matrices and refer to \cite{Shmulyan1959} for
more information. Let $\mathscr H_{1}$ be a subspace of $\mathscr H$
different from $\{0\}$ and $\mathscr H$, and let $\mathscr H_{2}$ be
its orthogonal complement. For $S\in\frakS$, let
\begin{equation}
  \label{eq2.5}
  S = \begin{pmatrix} S_{1} & S_{12} \\ S_{12}^{\ast} & S_{2}
      \end{pmatrix}
\end{equation}
be a $2\times 2$ block operator representation of $S$ according to the
orthogonal decomposition $\mathscr H = \mathscr H_{1}\oplus\mathscr H_{2}$.

\begin{theorem}
  \label{thm2.2}
An operator $S\in\frakS$ is positive if and only if the following three
conditions are satisfied:
\begin{enumerate}
\item[\normalfont(i)] $S_{1}\geq0$,  
\item[(ii)] $\mathscr R(S_{12})\subseteq\mathscr R(S_{1}^{\sfrac{1}{2}})$,
\item[(iii)]
$S_{2}-\bigl((S_{1}^{\#})^{\sfrac{1}{2}}S_{12}\bigr)^{\ast}
(S_{1}^{\#})^{\sfrac{1}{2}}S_{12}\geq0$.
\end{enumerate}
Here, $(S_{1}^{\#})^{\sfrac{1}{2}}$ denotes the unique possibly unbounded
positive self-adjoint square root of $S_{1}^{\#}$.
\end{theorem}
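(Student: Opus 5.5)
The plan is to reduce positivity of $S$ to positivity of the quadratic form $(Su,u)$ on vectors $u=u_1\oplus u_2$, $u_j\in\mathscr H_j$, namely
\[
(Su,u)=(S_1u_1,u_1)+2\operatorname{Re}(S_{12}u_2,u_1)+(S_2u_2,u_2),
\]
and to analyse this form with $u_2$ fixed and $u_1$ varying. The key device is a factorization of $S_{12}$ through $S_1^{\sfrac12}$. By Douglas' lemma, condition (ii) is equivalent to the existence of a bounded operator $X\colon\mathscr H_2\to\mathscr H_1$ with $S_{12}=S_1^{\sfrac12}X$. Moreover, $X$ is unique if we require $\mathscr R(X)\subseteq\overline{\mathscr R(S_1)}$, and then $X=(S_1^{\#})^{\sfrac12}S_{12}$. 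To justify this identification, note that $(S_1^{\#})^{\sfrac12}=(S_1^{\sfrac12})^{\#}$ on $\mathscr R(S_1^{\sfrac12})\oplus\mathscr N(S_1)$. With $X$ in hand, condition (iii) reads $S_2-X^{\ast}X\geq0$.

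\emph{Sufficiency.} Assume (i)--(iii) and write $S_{12}=S_1^{\sfrac12}X$. Then
\[
(Su,u)=\|S_1^{\sfrac12}u_1\|^2+2\operatorname{Re}(Xu_2,S_1^{\sfrac12}u_1)+\|Xu_2\|^2+\bigl((S_2-X^{\ast}X)u_2,u_2\bigr).
\]
This equals $\|S_1^{\sfrac12}u_1+Xu_2\|^2+\bigl((S_2-X^\ast X)u_2,u_2\bigr)\geq0$. Equivalently, $S=\begin{pmatrix}S_1^{\sfrac12}&0\\X^{\ast}&0\end{pmatrix}\begin{pmatrix}S_1^{\sfrac12}&X\\0&0\end{pmatrix}+\begin{pmatrix}0&0\\0&S_2-X^{\ast}X\end{pmatrix}$, which is a sum of positive operators.

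\emph{Necessity.} Assume $S\geq0$. Taking $u_2=0$ gives (i).

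For (ii), I use the standard inequality for positive forms:
\[
|(S_{12}u_2,u_1)|^2\leq(S_1u_1,u_1)(S_2u_2,u_2)\leq\|S_2\|\,\|u_2\|^2\,\|S_1^{\sfrac12}u_1\|^2.
\]
It follows by Cauchy--Schwarz applied to the positive sesquilinear form of $S$ on the vectors $u_1\oplus0$ and $0\oplus u_2$. Hence the map $S_1^{\sfrac12}u_1\mapsto S_{12}^{\ast}u_1$ is well defined and bounded, with norm at most $\|S_2\|^{\sfrac12}$, on $\mathscr R(S_1^{\sfrac12})$. Extend it by continuity to the closure and by $0$ on the orthogonal complement $\mathscr N(S_1)$; call the resulting operator $Y$. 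Then $S_{12}^{\ast}=YS_1^{\sfrac12}$, and setting $X:=Y^{\ast}$ gives $S_{12}=S_1^{\sfrac12}X$ with $\mathscr R(X)\subseteq\overline{\mathscr R(S_1)}$. This is (ii), and it identifies $X$ with $(S_1^{\#})^{\sfrac12}S_{12}$.

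For (iii), I use the completed-square identity above, which holds whenever (ii) holds, to get
\[
0\leq(Su,u)=\|S_1^{\sfrac12}u_1+Xu_2\|^2+\bigl((S_2-X^\ast X)u_2,u_2\bigr).
\]
Since $Xu_2\in\overline{\mathscr R(S_1^{\sfrac12})}$, one can choose $u_1$ so that $S_1^{\sfrac12}u_1$ approximates $-Xu_2$ arbitrarily well. The first term can therefore be made arbitrarily small, which gives $\bigl((S_2-X^{\ast}X)u_2,u_2\bigr)\geq0$.

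The main obstacle is the bookkeeping with the possibly unbounded operator $(S_1^{\#})^{\sfrac12}$. One must check that $(S_1^{\#})^{\sfrac12}S_{12}$ is everywhere defined and bounded under (ii), and that it coincides with the Douglas factor $X$. I would handle this through the spectral theorem: on $\overline{\mathscr R(S_1)}$, $(S_1^{\#})^{\sfrac12}$ is the function $\lambda^{-\sfrac12}$ of $S_1$, and it inverts $S_1^{\sfrac12}$ on $\mathscr R(S_1^{\sfrac12})$. The only remaining analytic point is the density argument in the last step, and it is routine.
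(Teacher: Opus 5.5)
Your proof is correct. The identification $(S_1^{\#})^{\sfrac12}=(S_1^{\sfrac12})^{\#}$ via the spectral theorem is right. So is the completed-square identity $(Su,u)=\|S_1^{\sfrac12}u_1+Xu_2\|^2+\bigl((S_2-X^{\ast}X)u_2,u_2\bigr)$. The Cauchy--Schwarz bound $\|S_{12}^{\ast}u_1\|\le\|S_2\|^{\sfrac12}\|S_1^{\sfrac12}u_1\|$ is, in effect, a proof of Douglas' lemma in this special case. Finally, the density step works because $Xu_2\in\overline{\mathscr R(S_1^{\sfrac12})}$.

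The paper itself gives no proof of this theorem. It only recalls the statement as an operator version of Albert's criterion and refers to \v{S}mul'jan for details, so there is no internal argument to compare yours with. Your route combines two ingredients: the Douglas factor $S_{12}=S_1^{\sfrac12}X$, normalised by $\mathscr R(X)\subseteq\overline{\mathscr R(S_1)}$, and completion of the square. This is the standard self-contained argument, and it makes explicit two things the bare citation leaves implicit.

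First, under (ii) the operator $(S_1^{\#})^{\sfrac12}S_{12}$ is everywhere defined and bounded, namely equal to $X$. Without this, condition (iii) would not even be a meaningful inequality in $\frakS(\mathscr H_2)$.

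Second, your last step actually proves the variational formula $\bigl((S/S_1)u_2,u_2\bigr)=\inf_{u_1\in\mathscr H_1}\bigl(S(u_1\oplus u_2),u_1\oplus u_2\bigr)$ whenever (i) and (ii) hold, in particular for $S\geq0$. This is exactly the link to Anderson's shorted operator that the paper mentions immediately after the theorem.
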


The operator
\begin{equation}
  \label{eq2.6}
 S/S_{1} := S_{2}-\bigl((S_{1}^{\#})^{\sfrac{1}{2}}S_{12}\bigr)^{\ast}
(S_{1}^{\#})^{\sfrac{1}{2}}S_{12} 
\end{equation}
is called the generalized Schur complement of $S$ and plays an important
role in many fields of mathematics, \cf \cite{Zhang2005} for a large
number of applications in the matrix case. A close relative to
$S/S_{1}$ is its extension to the whole space $\mathscr H$ by
\[
  \begin{pmatrix} 0 & 0 \\ 0 & S/S_{1} \end{pmatrix}.
\]  
It was introduced by Anderson \cite{Anderson1971,AndersonTrapp1975},
\cf \cite{Shmulyan1959}, and was called \emph{shorted} operator of
$S$. If  $\frakM$ is a subset of $\frakS^{+}$ and $S\in\frakM$ written
as a $2\times2$ operator matrix \eqref{eq2.5} according to a certain
orthogonal decomposition
$\mathscr H = \mathscr H_{1} \oplus \mathscr H_{2}$, we set
\begin{equation}
  \label{eq2.7}
  \frakM/\mathscr H_{1} := \bigl\{ S/S_{1} : S\in\frakM \bigr\}.
\end{equation}

\section{Generalizations of Kadison's theorem}
\label{sec3}

The aim of the present section is to apply Kadison's elegant  proof
\cite{Kadison1951}, \cf \cite[Ex.~2.8.18]{KadisonRingrose1983}, to a
set $\frakM$ of more than two elements.

\begin{lemma}
\label{lem3.1}
Let $\frakM\subseteq\frakS^{+}$. If there exists a dense subset
$\mathscr D$ of the unit sphere of $\mathscr H$ such that
\begin{equation}
\label{eq3.1}
\inf\bigl\{(Au,u) : A\in\frakM\bigr\}=0,\quad u\in\mathscr D,  
\end{equation}
the infimum of $\frakM$ exists and is equal to $0$. In addition, let
$\frakM$ be a bounded set. If there exists $u\in\mathscr H$ such that
\begin{equation}
  \label{eq3.2}
  \alpha:=\inf\bigl\{(Au,u) : A\in\frakM\bigr\} > 0
\end{equation}
the zero operator is not the infimum of $\frakM$.
\end{lemma}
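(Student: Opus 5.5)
The two assertions are proved separately, using only the definitions, the continuity of quadratic forms, and the fact that a positive operator vanishes exactly when its quadratic form vanishes.

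\emph{First assertion.} Since $\frakM\subseteq\frakS^{+}$, the zero operator lies in $\frakL$. It remains to show $L\leq 0$ for every $L\in\frakL$. Fix $L\in\frakL$ and $u\in\mathscr D$. Then $(Lu,u)\leq(Au,u)$ for every $A\in\frakM$, and taking the infimum over $A$ in \eqref{eq3.1} gives $(Lu,u)\leq 0$. The map $u\mapsto(Lu,u)$ is continuous because $L$ is bounded. Since $\mathscr D$ is dense in the unit sphere, $(Lu,u)\leq 0$ holds on the whole unit sphere, and by homogeneity on all of $\mathscr H$. Hence $L\leq 0$, so $0=G(\frakL)=\inf\frakM$.

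\emph{Second assertion.} Suppose $\frakM$ is bounded, say $\|A\|\leq c$ for all $A\in\frakM$, and that \eqref{eq3.2} holds for some $u$; necessarily $u\neq0$. The plan is to produce a lower bound of $\frakM$ that is not $\leq 0$, which shows that $0$ cannot be the greatest lower bound.

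The natural candidate is a small positive multiple of the rank-one projection $P$ onto $\mathbb C u$, normalized so that $\|u\|=1$. A bare multiple $\varepsilon P$ fails in general, because $A$ may annihilate vectors orthogonal to $u$ and then $A-\varepsilon P$ is not positive. So instead take
\[
L:=\varepsilon P-\delta(I-P)
\]
with suitable $\varepsilon,\delta>0$; then $(Lu,u)=\varepsilon>0$, so $L\not\leq 0$.

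Verifying $L\in\frakL$ is the main step. Write $v=au+w$ with $w\perp u$. Then
\[
((A-L)v,v)=|a|^{2}\bigl((Au,u)-\varepsilon\bigr)+2\operatorname{Re}\bigl(\bar a (Aw,u)\bigr)+(Aw,w)+\delta\|w\|^{2}.
\]
Use $(Au,u)\geq\alpha$, $(Aw,w)\geq0$ and $|(Aw,u)|\leq c\|w\|$. The expression is then bounded below by the quadratic form
\[
(\alpha-\varepsilon)|a|^{2}-2c|a|\|w\|+\delta\|w\|^{2}.
\]
This form is nonnegative once $\varepsilon=\alpha/2$ and $\delta\geq 2c^{2}/\alpha$, by the discriminant condition. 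This is exactly where the boundedness of $\frakM$ enters: it makes $\delta$ uniform in $A$.

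Thus $L\in\frakL$ but $L\not\leq 0$, so $0$ is not the infimum of $\frakM$.
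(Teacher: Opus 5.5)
Your proof is correct and follows the paper's argument. The first claim is the same continuity/density argument. For the second, the paper uses the same lower bound $L=l_{1}P+l_{2}(I-P)$ relative to $\mathscr H=\mathbb C u\oplus u^{\perp}$, with $l_{1}\in(0,\alpha)$ and $l_{2}<0$ of large modulus. The paper verifies $L\leq A$ via the Albert/Schur-complement criterion (Theorem~\ref{thm2.2}), reducing it to $A_{2}-l_{2}I\geq(a_{1}-l_{1})^{-1}A_{12}^{\ast}A_{12}$. Your direct discriminant estimate gives the same uniform condition $\delta\geq c^{2}/(\alpha-\varepsilon)$ by elementary means.
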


\begin{proof}
The first claim is an obvious consequence of the continuity of the
operators of $\frakS$. Assume that \eqref{eq3.2} is satisfied. Let
$\mathscr H_{1}$ be the space spanned by $u$, $\mathscr H_{2}$ be its
orthogonal complement, and 
\[
A = \begin{pmatrix} a_{1} & A_{12} \\ A_{12}^{\ast} & A_{2} \end{pmatrix}
\]  
a partition of $A\in\frakM$ according to the decomposition $\mathscr
H=\mathscr H_{1}\oplus\mathscr H_{2}$. Choose $l_{1}\in
(0,\alpha)$. If we can find an operator $L\in\frakL$ of the form
\[
L = \begin{pmatrix} l_{1} & 0 \\ 0 & l_{2}I \end{pmatrix},
\] 
this will mean that $0$ is not the infimum of $\frakM$. By
Theorem~\ref{thm2.2} the inequality $L\leq A$ is true if
\begin{equation}
\label{eq3.3}  
A_{2}-l_{2}I \geq (a_{1}-l_{1})^{-1} A_{12}^{\ast} A_{12}.
\end{equation}
If $\frakM$ is a bounded set, \eqref{eq3.3} is satisfied for all
$A\in\frakM$ and a number $l_{2}\in(-\infty,0)$, where $|l_{2}|$ is
large enough.
\end{proof}  

Using a standard continuity argument, it can be concluded that for a
bounded set $\frakM$, relation~\eqref{eq3.1} is satisfied if and only
if
\begin{equation}
\label{eq3.3a}  
  \inf\bigl\{(Au,u) : A\in\frakM \bigr\} = 0
  \quad\text{for all $u\in\mathscr H$.}
\end{equation}
We omit the details and give a simple example showing that for an
unbounded set $\frakM$, such an equivalence is not true in general.

\begin{example}
\label{ex3.2}

Let $\{u_{n}: n\in\mathbb N\}$ be an orthonormal basis of a Hilbert
space and $P_{n}$ denote the projection onto the space spanned by
$u_{n}$. For the unbounded set $\frakM:=\{n^{2}P_{n} : n\in\mathbb
N\}$ one has
\[
  \inf\bigl\{(n^{2}P_{n}u,u) : n\in\mathbb N\bigr\}=0
\]
if $u$ is a finite linear combination of the basis vectors, therefore,
\eqref{eq3.1} is satisfied, and $\inf\frakM=0$. Let
$(a_{n})_{n\in\mathbb N}$  be a sequence of positive numbers with
\[
 \sum_{n=1}^{\infty}a_{n}^{2}<\infty \quad\text{and}\quad
 \limsup_{n\to\infty} na_{n} =\infty. 
\]
Then, for $u:=\sum_{n=1}^{\infty}a_{n}u_{n}$ one has
\[
\limsup_{n\to\infty}(n^{2} P_{n}u,u)=
\limsup_{n\to\infty}n^{2}a_{n}^{2} = \infty,
\]
and \eqref{eq3.3a} is not satisfied.
\end{example}

Our proof of the second assertion of Lemma~\ref{lem3.1} is based on
Kadison's idea. It does not work for an unbounded set $\frakM$ and the
following question remains open. Does there exist an unbounded set
$\frakM$ with $\inf\frakM=0$ although for all dense subsets $\mathscr
D$ of the unit sphere, relation \eqref{eq3.1} fails?

The following theorem, whose first assertion was stated by
{\v S}mul{\cprime}jan \cite[Thm. 1.2]{Shmulyan1959} in a similar form,
is an immediate consequence of Lemma~\ref{lem3.1} and \eqref{eq2.3}.

\begin{theorem}
  \label{thm3.3}
If there exist $G\in\frakS$ and a dense subset $\mathscr D$ of the
unit sphere of $\mathscr H$ such that for $u\in\mathscr D$,
\begin{equation}
  \label{eq3.4}
  (Gu,u)=\inf\bigl\{(Au,u) : A\in\frakM\bigr\},
\end{equation}
the infimum of $\frakM$ exists and equals $G$. Let, additionally,
$\frakM$ be bounded. If $\inf\frakM$ exists, condition~\eqref{eq3.4} is
satisfied for $G:=\inf\frakM$ and all $u\in\mathscr H$.
\end{theorem}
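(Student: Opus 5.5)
The plan is to reduce both assertions to Lemma~\ref{lem3.1} by translating with $-G$, using \eqref{eq2.3}. Set $\frakM':=\frakM-G=\{A-G : A\in\frakM\}$. By \eqref{eq2.3}, $\frakL(\frakM')=\frakL(\frakM)-G$. Hence $\inf\frakM$ exists and equals $G$ if and only if $\inf\frakM'$ exists and equals $0$. Boundedness is also preserved under this translation.

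For the first assertion, \eqref{eq3.4} gives $\bigl((A-G)u,u\bigr)\geq 0$ for every $u\in\mathscr D$ and $A\in\frakM$. Since $\mathscr D$ is dense in the unit sphere and the quadratic forms are continuous, this inequality extends to the whole sphere and then, by homogeneity, to all of $\mathscr H$. So $\frakM'\subseteq\frakS^{+}$. Moreover, \eqref{eq3.4} says precisely that $\inf\{(A'u,u):A'\in\frakM'\}=0$ for $u\in\mathscr D$, which is \eqref{eq3.1}. The first part of Lemma~\ref{lem3.1} then yields $\inf\frakM'=0$, and therefore $\inf\frakM=G$.

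For the second assertion, let $\frakM$ be bounded and suppose $G:=\inf\frakM$ exists. Then $\frakM'=\frakM-G$ is a bounded subset of $\frakS^{+}$, because $G\leq A$ for all $A\in\frakM$, and $\inf\frakM'=0$. Suppose some $u\in\mathscr H$ satisfies $(Gu,u)<\inf\{(Au,u):A\in\frakM\}$. This $u$ is necessarily nonzero, and for it \eqref{eq3.2} holds for $\frakM'$ with $\alpha>0$. The second part of Lemma~\ref{lem3.1} then says $0$ is not the infimum of $\frakM'$, a contradiction. On the other hand, $(Gu,u)\leq(Au,u)$ for all $A$, so $(Gu,u)\leq\inf$ always. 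Together these give equality in \eqref{eq3.4} for every $u\in\mathscr H$.

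There is no real obstacle here. The only points requiring care are, first, checking that $\frakM'$ lies in $\frakS^{+}$, so that Lemma~\ref{lem3.1} is applicable; this uses density of $\mathscr D$ in the first part and $G\in\frakL$ in the second. Second, one should note that boundedness of $\frakM$ transfers to $\frakM'$, which is needed for the second part of the lemma.
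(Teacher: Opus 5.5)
Your proposal is correct and matches the paper's approach: the paper presents Theorem~\ref{thm3.3} as an immediate consequence of Lemma~\ref{lem3.1} and \eqref{eq2.3}, which amounts to translating by $-G$ and applying the two parts of the lemma, exactly as you do. Your explicit checks that $\frakM-G\subseteq\frakS^{+}$ (by density of $\mathscr D$ in the first part, and by $G\in\frakL$ in the second) and that boundedness survives the translation supply the details the paper leaves implicit.
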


As an application of the preceding theorem we can generalize
Kadison's result.

\begin{corollary}
\label{cor3.4}
Let $\frakM$ be countable and weak-operator compact. The infimum of
$\frakM$ exists if and only if there exists $G\in\frakM$ such that
$G\leq A$ for all $A\in\frakM$. In this case, $G=\inf\frakM$.
\end{corollary}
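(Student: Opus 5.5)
The "if" direction is (2.e): if $G\in\frakM$ satisfies $G\leq A$ for all $A\in\frakM$, then $G\in\frakL\cap\frakM$, hence $G=\inf\frakM$. So the work is the converse. Assume $G:=\inf\frakM$ exists. A weak-operator compact subset of $\frakS$ is bounded: for each $u,v$ the continuous function $A\mapsto(Au,v)$ is bounded on $\frakM$, so the uniform boundedness principle applies. By the second assertion of Theorem~\ref{thm3.3} we therefore get
\[
(Gu,u)=\inf\bigl\{(Au,u) : A\in\frakM\bigr\},\quad u\in\mathscr H .
\]
It remains to show that some $A\in\frakM$ equals $G$.

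Enumerate $\frakM=\{A_{1},A_{2},\ldots\}$; the finite case is included by repetition. For $n\in\mathbb N$ set
\[
\mathscr F_{n}:=\bigl\{u\in\mathscr H : \bigl((A_{n}-G)u,u\bigr)=0\bigr\}
=\mathscr N\bigl((A_{n}-G)^{\sfrac12}\bigr)=\mathscr N(A_{n}-G),
\]
using $A_{n}-G\geq0$. Each $\mathscr F_{n}$ is a closed subspace. The goal is to show $\bigcup_{n}\mathscr F_{n}=\mathscr H$. Then Baire's theorem gives an $n$ with $\mathscr F_{n}$ having nonempty interior, so $\mathscr F_{n}=\mathscr H$ because it is a subspace. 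Hence $A_{n}=G$, and (2.e) completes the proof.

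To cover $\mathscr H$, fix $u$. By the displayed identity, $\inf_{n}((A_{n}-G)u,u)=0$. The map $A\mapsto((A-G)u,u)$ is weak-operator continuous on the compact set $\frakM$, so it attains its infimum at some element of $\frakM$. That element is some $A_{n}$, so $u\in\mathscr F_{n}$. This is where compactness is essential: without it the infimum need not be attained, as counterexamples in the spirit of Example~\ref{ex3.2} show.

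The main obstacle is the reduction to pointwise attainment, which rests on the boundedness needed to invoke Theorem~\ref{thm3.3}. After that, the argument is a clean Baire-category step using countability, since a Hilbert space is not a countable union of proper closed subspaces.
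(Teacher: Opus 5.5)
Your proof is correct and follows the paper's argument. Both use the uniform boundedness principle for norm boundedness, then Theorem~\ref{thm3.3} to get $(Gu,u)=\inf\{(Au,u):A\in\frakM\}$ for every $u$, then weak-operator compactness to realize this infimum by some $A\in\frakM$ so that $\mathscr H=\bigcup_{A\in\frakM}\mathscr N(A-G)$, and finally Baire's theorem. Your observation that the continuous function $A\mapsto((A-G)u,u)$ attains its minimum on the compact set $\frakM$ is slightly cleaner than the paper's extraction of a convergent subsequence, which tacitly assumes sequential compactness.
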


\begin{proof}
The `if'-part is obvious and is true for an arbitrary set $\frakM$.
Conversely, assume that $G:=\inf\frakM$ exists. Choose $v\in\mathscr
H$. Since $\frakM$ is weak-operator compact, it is weak-operator
bounded, hence, bounded by uniform boundedness principle. Boundedness
of $\frakM$ and Theorem~\ref{thm3.3} yield the existence of                             
$A_{n}\in\frakM$ with
\[
(Gv,v)\leq (A_{n}v,v)  \leq (Gv,v)+\frac{1}{n},\;n\in\mathbb N.
\]  
By weak-operator compactness one can find a subsequence
$\bigl(A_{n_{k}}\bigr)$ and an operator $A\in\frakM$ satisfying
\[
(Gv,v) = \lim_{k\to\infty}(A_{n_{k}}v,v) = (Av,v).
\]  
Since $A-G\in\frakS^{+}$, it follows $v\in\mathscr N(A-G)$, and since
$v\in\mathscr H$ was arbitrary, we obtain
\begin{equation}
\label{eq3.5}
\mathscr H = \bigcup_{A\in\frakM}\mathscr N(A-G).
\end{equation}
If $A-G\neq 0$, the set $\mathscr N(A-G)$ is a nowhere dense subset of
$\mathscr H$. Therefore, \eqref{eq3.5} and Baire's category theorem
imply that there exists $A\in\frakM$ with $G=A$. The equality
$G=\inf\frakM$ is trivial.
\end{proof}

\begin{remark}
If $\frakM$ is a finite set, the topological proof of the preceding
corollary on the basis of Baire's category theorem can be replaced by
a purely algebraic proof of the fact, that no vector space over the
field $\mathbb R$ or $\mathbb C$ is a finite union of proper subspaces.
\end{remark}

We show by examples that the `only if'-part of corollary~\ref{cor3.4}
is not true if one of the conditions on $\frakM$ is omitted.

\begin{example}
\label{ex3.5}
Each of the following three sets $\frakM$ satisfies $\inf\frakM=0$ and
$0\notin\frakM$.
\begin{enumerate}
\item[(i)]
Let $\{u_{n} : n\in\mathbb N\}$ be a countable dense subset of the
unit  sphere of $\mathbb C^{2}$ and $P_{n}$ denote the projection onto
the space spanned by $u_{n}$. The set $\frakM:=\{P_{n}:n\in\mathbb
N\}$ is countable and bounded but not closed.
\item[(ii)]
Let $\frakM=\{P_{u} : u\in\mathbb C^{2}, \|u\|=1\}$, where $P_{u}$
denote the projection onto the space spanned by $u$. The set
$\frakM$ is compact but not countable.
\item[(iii)]
The set $\frakM$ of Example~\ref{ex3.2} is countable and
weak-operator closed but not bounded.
\end{enumerate}
\end{example}

\section{Maximal lower bounds}
\label{sec4}

The present section deals with the structure of the set of maximal
lower bounds of a set $\frakM$. We start with an auxiliary result,
which is also of interest in the theory of quantum effects, cf.\
Section~\ref{sec6}.
\begin{lemma}
\label{lem4.1}
Let $\frakM\subseteq\frakS^{+}$. If
\begin{equation}
\label{eq4.1}
\bigcap_{A\in\frakM} {\mathscr R}(A^{\sfrac{1}{2}}) = \{0\}  
\end{equation}
the zero operator is the only positive lower bound of $\frakM$. Let,
additionally, $\frakM$ be a finite set. If \eqref{eq4.1} is not true,
there exists a positive lower bound of $\frakM$ different from $0$.
\end{lemma}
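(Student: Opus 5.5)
The first claim rests on the Douglas range inclusion (factorization) lemma: for $L,A\in\frakS^{+}$, the inequality $L\leq A$ implies $\mathscr R(L^{\sfrac{1}{2}})\subseteq\mathscr R(A^{\sfrac{1}{2}})$. Indeed, $\|L^{\sfrac{1}{2}}u\|\leq\|A^{\sfrac{1}{2}}u\|$ for all $u$, so the map $A^{\sfrac{1}{2}}u\mapsto L^{\sfrac{1}{2}}u$ is a well defined contraction on $\mathscr R(A^{\sfrac{1}{2}})$. Extend it by continuity to the closure and by $0$ on the orthogonal complement; this gives a contraction $C$ with $L^{\sfrac{1}{2}}=CA^{\sfrac{1}{2}}$. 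Taking adjoints yields $L^{\sfrac{1}{2}}=A^{\sfrac{1}{2}}C^{\ast}$, hence the range inclusion.

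Now let $L\geq 0$ be a lower bound of $\frakM$. Then
\[
\mathscr R(L^{\sfrac{1}{2}})\subseteq\bigcap_{A\in\frakM}\mathscr R(A^{\sfrac{1}{2}})=\{0\},
\]
so $L^{\sfrac{1}{2}}=0$ and therefore $L=0$.

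For the converse, let $\frakM=\{A_{1},\ldots,A_{n}\}$ be finite and pick a nonzero $w\in\bigcap_{k}\mathscr R(A_{k}^{\sfrac{1}{2}})$. I will look for a rank-one lower bound $L:=\varepsilon P_{w}$ with $\varepsilon>0$, where $P_{w}$ is the projection onto the span of $w$. It suffices to show that for each fixed $k$ there is $\varepsilon_{k}>0$ with $\varepsilon_{k}P_{w}\leq A_{k}$; then $\varepsilon:=\min_{k}\varepsilon_{k}>0$ works, and this is exactly where finiteness is used.

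For fixed $A=A_{k}$, write $w=A^{\sfrac{1}{2}}z$ with $z$ chosen in $\overline{\mathscr R(A^{\sfrac{1}{2}})}$. For any $u\in\mathscr H$, the Cauchy--Schwarz inequality gives
\[
|(u,w)|^{2}=|(A^{\sfrac{1}{2}}u,z)|^{2}\leq\|z\|^{2}(Au,u).
\]
Hence
\[
\bigl(\|w\|^{-2}\|z\|^{-2}\,|(u,w)|^{2}\bigr)\leq (Au,u),
\]
that is, $\varepsilon_{k}P_{w}\leq A$ with $\varepsilon_{k}:=\|w\|^{2}\|z\|^{-2}$. (Here $z\neq 0$ because $w\neq 0$.) This converse could also be obtained from Theorem~\ref{thm2.2} applied with $\mathscr H_{1}$ spanned by $w$, but the direct estimate is simpler.

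The main obstacle is the Douglas-type range inclusion in the first part. Once it is in place, the rest is elementary; it requires only the careful extension of the contraction described in the first paragraph.
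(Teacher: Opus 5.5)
Your proof is correct. For the first assertion it matches the paper, which simply cites Douglas' theorem where you prove the range inclusion directly.

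For the second assertion you take a different route.

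\emph{The paper's route.} It takes the parallel sum $S$ of the finitely many operators of $\frakM$. It then quotes Fillmore--Williams and Anderson--Trapp for two facts: $S$ is a positive lower bound, and $\mathscr R(S^{\sfrac{1}{2}})=\bigcap_{A\in\frakM}\mathscr R(A^{\sfrac{1}{2}})$. Hence $S\neq 0$.

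\emph{Your route.} You build the rank-one lower bound $\varepsilon P_{w}$ from a single nonzero $w$ in the intersection, using Cauchy--Schwarz. This is in effect the easy converse of Douglas' lemma for rank-one operators. Finiteness enters only through $\varepsilon=\min_{k}\varepsilon_{k}>0$.

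\emph{What each buys.} Your argument is self-contained and shows exactly where finiteness is needed. For infinite $\frakM$ the $\varepsilon_{k}$ can tend to $0$: in Example~\ref{ex4.3}, with $w=e_{1}$, one gets $\varepsilon_{n}=1/n$. The paper's argument produces a canonical lower bound whose square-root range is the whole intersection. That property is reused later, in Theorems~\ref{thm6.11} and \ref{thm6.13}. Your construction only gives a lower bound with one-dimensional range.

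\emph{A small slip.} The displayed inequality should read $\|z\|^{-2}|(u,w)|^{2}\leq (Au,u)$, without the extra factor $\|w\|^{-2}$. As written it can fail when $\|w\|<1$: take $A=I$ and $u=w$. Your choice $\varepsilon_{k}=\|w\|^{2}\|z\|^{-2}$ gives exactly $\varepsilon_{k}(P_{w}u,u)=\|z\|^{-2}|(u,w)|^{2}$, so your conclusion $\varepsilon_{k}P_{w}\leq A$ is still correct.
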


\begin{proof}
If $S,T\in\frakS^{+}$ and $S\leq T$, then
$\mathscr R(S^{\sfrac{1}{2}})\subseteq\mathscr R(T^{\sfrac{1}{2}})$ by
Douglas' theorem \cite{Douglas1966}. Thus, \eqref{eq4.1} yields $L=0$ if
$L\in\frakL\cap\frakS^{+}$. The proof of the second assertion uses
properties of the parallel sum of positive operators, cf.\
\cite{AndersonTrapp1975,FillmoreWilliams1971,PekarevSmuljan1976} for
definition  and basic properties of parallel sum. Let $\frakM$ be
finite and $S$ denote the parallel sum of  the operators of $\frakM$.
Since $S$ is a positive lower bound of $\frakM$ satisfying
\[
  \mathscr R(S^{\sfrac{1}{2}}) =
  \bigcap_{A\in\frakM}\mathscr R(A^{\sfrac{1}{2}}), 
\]
\cf \cite[Thm.~4.2]{FillmoreWilliams1971} or
\cite[Thms.~10, 11]{AndersonTrapp1975}, the second assertion follows.
\end{proof}

Since there exist $A,B\in\frakS^{+}$ with
$\mathscr N(A)=\mathscr N(B) =\{0\}$ and
$\mathscr R(A^{\sfrac{1}{2}})\cap \mathscr R(B^{\sfrac{1}{2}})=\{0\}$, 
the second assertion of the preceding lemma shows that Theorem~2 of
\cite{Aslanov2005} is not correct.

Note that $M\in\frakL$ is a maximal lower bound of $\frakM$ if and
only if the zero operator is the only positive lower bound of the set
$\frakM-M$. Therefore, from Lemma~\ref{lem4.1} a characterization of
$\Max\frakL$ can be derived immediately.

\begin{theorem}
\label{thm4.2}
Let $\frakM\subseteq\frakS$. If $M\in\frakL$ and
\begin{equation}
\label{eq4.2}
\bigcap_{A\in\frakM} \mathscr R\bigl((A-M)^{\sfrac{1}{2}}\bigr)=\{0\},
\end{equation}
the operator $M$ is a maximal lower bound of $\frakM$. Let,
additionally, $\frakM$ be a finite set. If \eqref{eq4.2} is not true,
the operator $M$ is not a maximal element of $\frakL$.
\hfill\MYQED
\end{theorem}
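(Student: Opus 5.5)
The plan is to reduce the statement to Lemma~\ref{lem4.1} by translating by $-M$, as the remark preceding the theorem suggests. By \eqref{eq2.3} with $S:=-M$, we have $\frakL(\frakM-M)=\frakL(\frakM)-M$. Since $M\in\frakL$, every element of $\frakM-M:=\{A-M : A\in\frakM\}$ is positive, so $\frakM-M\subseteq\frakS^{+}$ and Lemma~\ref{lem4.1} applies to it. Each $A-M$ is positive, so $(A-M)^{\sfrac{1}{2}}$ makes sense. Hence condition~\eqref{eq4.2} is exactly condition~\eqref{eq4.1} for the set $\frakM-M$.

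The first step is to record the equivalence stated before the theorem: $M$ is maximal in $\frakL$ if and only if $0$ is the only positive lower bound of $\frakM-M$. Suppose $L\in\frakL$ with $M\leq L$. Then $L-M\geq 0$, and $L-M$ is a lower bound of $\frakM-M$ by \eqref{eq2.3}. Conversely, if $P\geq0$ is a lower bound of $\frakM-M$, then $M+P\in\frakL$ and $M+P\geq M$. So a positive lower bound $P\neq0$ of $\frakM-M$ exists exactly when some $L\in\frakL$ satisfies $L\geq M$ and $L\neq M$.

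For the first assertion, assume \eqref{eq4.2}. The first part of Lemma~\ref{lem4.1}, applied to $\frakM-M$, shows that $0$ is the only positive lower bound of $\frakM-M$. By the equivalence, $M\in\Max\frakL$.

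For the second assertion, let $\frakM$ be finite and suppose \eqref{eq4.2} fails. Then $\frakM-M$ is a finite subset of $\frakS^{+}$ for which \eqref{eq4.1} fails. The second part of Lemma~\ref{lem4.1} then provides a positive lower bound $P\neq0$ of $\frakM-M$ (in fact the parallel sum of the operators $A-M$). The operator $M+P$ lies in $\frakL$, satisfies $M+P\geq M$, and differs from $M$, so $M$ is not maximal.

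There is no real obstacle here: all the analytic content (Douglas' theorem and the range formula for the parallel sum) sits inside Lemma~\ref{lem4.1}. The only point needing care is that the translation preserves positivity and the order, which is \eqref{eq2.3}. \MYQED
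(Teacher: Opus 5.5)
Your proposal is correct and follows the paper's own argument: translate by $-M$ via \eqref{eq2.3}, use the observation that $M\in\Max\frakL$ if and only if $0$ is the only positive lower bound of $\frakM-M$, and apply both parts of Lemma~\ref{lem4.1} to the set $\frakM-M\subseteq\frakS^{+}$. You also spell out the verification of that equivalence, which the paper leaves to the reader.
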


\begin{corollary}  
\label{cor4.3a}
Let $\mathscr H=\mathscr H_{1}\oplus\mathscr H_{2}$ be a nontrivial
orthogonal decomposition of $\mathscr H$. Let $\frakM_{1}$ be a subset
of $\frakS(\mathscr H_{1})$ bounded from below and
\[
  \frakM:= \left\{
    \begin{pmatrix}
      A_{1} & 0 \\ 0 & 0
    \end{pmatrix} : A_{1} \in\frakM_{1}\right\}
\]  
a corresponding subset of $\frakS(\mathscr H)$. If $M_{1}$ is a
maximal lower bound of $\frakM_{1}$, then
$\begin{psmallmatrix} M_{1} & 0 \\ 0 & 0 \end{psmallmatrix}$
is a maximal lower bound of $\frakM$. Let, additionally,
$\frakM_{1}$ be a finite set. Any maximal lower bound $M$ of $\frakM$
has the form
$\begin{psmallmatrix} M_{1} & 0 \\ 0 & 0 \end{psmallmatrix}$
for some maximal lower bound $M_{1}$ of $\frakM_{1}$.
\end{corollary}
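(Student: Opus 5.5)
The plan is to treat the two assertions separately and to work directly with $2\times2$ block representations \eqref{eq2.5} with respect to $\mathscr H=\mathscr H_{1}\oplus\mathscr H_{2}$, rather than through the range criterion of Theorem~\ref{thm4.2}.

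\emph{First assertion.} Put $M:=\begin{psmallmatrix} M_{1} & 0 \\ 0 & 0 \end{psmallmatrix}$. It is a lower bound of $\frakM$, since $A-M=\begin{psmallmatrix} A_{1}-M_{1} & 0 \\ 0 & 0 \end{psmallmatrix}\geq0$ for every $A_{1}\in\frakM_{1}$. Now let $L\in\frakL(\frakM)$ with $L\geq M$, and put $D:=L-M\geq0$. For $u\in\mathscr H_{2}$ we have $(Du,u)\leq((A-M)u,u)=0$. Hence $u\in\mathscr N(D)$, using that for a positive operator the quadratic form vanishes exactly on the null space. Therefore $D$ annihilates $\mathscr H_{2}$, and by self-adjointness it has the form $\begin{psmallmatrix} D_{1} & 0 \\ 0 & 0 \end{psmallmatrix}$. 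It follows that $L=\begin{psmallmatrix} L_{1} & 0 \\ 0 & 0 \end{psmallmatrix}$, where $L_{1}=M_{1}+D_{1}$ is a lower bound of $\frakM_{1}$ with $L_{1}\geq M_{1}$. Maximality of $M_{1}$ gives $L_{1}=M_{1}$, hence $L=M$. This argument uses no finiteness.

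\emph{Second assertion.} Let $M=\begin{psmallmatrix} M_{1} & M_{12} \\ M_{12}^{\ast} & M_{2} \end{psmallmatrix}$ be maximal in $\frakL(\frakM)$ and fix some $A\in\frakM$. Then
\[
A-M=\begin{pmatrix} A_{1}-M_{1} & -M_{12} \\ -M_{12}^{\ast} & -M_{2} \end{pmatrix}\geq0 .
\]
Apply Theorem~\ref{thm2.2} with the roles of $\mathscr H_{1}$ and $\mathscr H_{2}$ interchanged, so that $\mathscr H_{2}$ plays the role of the first summand. This yields $-M_{2}\geq0$ and $\mathscr R(M_{12}^{\ast})\subseteq\mathscr R\bigl((-M_{2})^{\sfrac12}\bigr)$. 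Hence $T:=\bigl((-M_{2})^{\#}\bigr)^{\sfrac12}M_{12}^{\ast}$ is everywhere defined and closed, and therefore bounded by the closed graph theorem. Set $K_{1}:=M_{1}+T^{\ast}T$ and $K:=\begin{psmallmatrix} K_{1} & 0 \\ 0 & 0 \end{psmallmatrix}$.

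We then verify two inequalities, again by Theorem~\ref{thm2.2} with $\mathscr H_{2}$ first.
\begin{itemize}
\item[(a)] $K-M=\begin{psmallmatrix} T^{\ast}T & -M_{12} \\ -M_{12}^{\ast} & -M_{2} \end{psmallmatrix}\geq0$. Its $\mathscr H_{2}$-block is $-M_{2}\geq0$, the range condition is the one just obtained, and the Schur complement equals $T^{\ast}T-T^{\ast}T=0$.
\item[(b)] $A-K=\begin{psmallmatrix} A_{1}-K_{1} & 0 \\ 0 & 0 \end{psmallmatrix}\geq0$, i.e.\ $A_{1}-M_{1}-T^{\ast}T\geq0$. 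This is exactly the Schur complement condition (iii) coming from $A-M\geq0$.
\end{itemize}
Thus $K$ is a lower bound of $\frakM$ with $K\geq M$. Maximality forces $M=K$, so $M_{12}=0$, $M_{2}=0$ and $M_{1}=K_{1}$. Finally, $M_{1}$ is a lower bound of $\frakM_{1}$, and it is maximal there: if $L_{1}\geq M_{1}$ is another lower bound of $\frakM_{1}$, then $\begin{psmallmatrix} L_{1} & 0 \\ 0 & 0 \end{psmallmatrix}$ is a lower bound of $\frakM$ dominating $M$, so $L_{1}=M_{1}$. (The finiteness assumption therefore does not seem to be needed in this route.)

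\emph{Main obstacle.} The delicate step is the careful use of Theorem~\ref{thm2.2} with the unbounded Moore--Penrose square root. One must confirm that the inclusion $\mathscr R(M_{12}^{\ast})\subseteq\mathscr R\bigl((-M_{2})^{\sfrac12}\bigr)$ makes $T$ a bounded operator. One must also check that the generalized Schur complement appearing in (iii) for $A-M$ is exactly $A_{1}-M_{1}-T^{\ast}T$, with the signs of the off-diagonal blocks handled correctly. If the interchange of blocks in Theorem~\ref{thm2.2} turns out to be awkward, the fallback is the paper's route: use finiteness of $\frakM_{1}$ and Theorem~\ref{thm4.2}, and show that a nonzero vector of $\mathscr H_{2}$-type in $\bigcap_{A}\mathscr R\bigl((A-M)^{\sfrac12}\bigr)$ arises whenever $M_{2}\neq0$ or $M_{12}\neq0$.
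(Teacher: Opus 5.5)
Your proof is correct. For the first assertion you spell out what the paper calls obvious. For the second assertion your route differs from the paper's and is more general.

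\textbf{The paper's route.} The paper uses the necessity part of Theorem~\ref{thm4.2}, which relies on Lemma~\ref{lem4.1} and parallel sums and therefore needs $\frakM_{1}$ to be finite. It gives $\bigcap_{A\in\frakM}\mathscr R(A-M)=\{0\}$ for a maximal $M$. On the other hand, $(A-M)v=-Mv$ for every $v\in\mathscr H_{2}$ and every $A\in\frakM$. Hence $M$ must annihilate $\mathscr H_{2}$. This is exactly your fallback.

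\textbf{Your route.} Starting from an arbitrary lower bound $M$, you construct the explicit block-diagonal lower bound $K=\begin{psmallmatrix} M_{1}+T^{\ast}T & 0\\ 0 & 0\end{psmallmatrix}\geq M$. You do this with the generalized Schur complement of Theorem~\ref{thm2.2}, applied with $\mathscr H_{2}$ as the first summand. The points you flagged are handled correctly:
\begin{itemize}
\item the signs of the off-diagonal blocks cancel in $T^{\ast}T$;
\item $T$ is closed and everywhere defined by the range inclusion, hence bounded;
\item $T$ depends only on $M$, so step (b) holds for all $A\in\frakM$ at once.
\end{itemize}

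\textbf{What each route buys.} Your argument never uses finiteness. It actually shows that every lower bound of $\frakM$ is dominated by one of the form $\begin{psmallmatrix} K_{1}&0\\0&0\end{psmallmatrix}$. So the second assertion holds for every $\frakM_{1}$ that is bounded from below.

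The paper's argument can also be freed of finiteness. Suppose $w:=-Mv\neq0$ for some $v\in\mathscr H_{2}$. Then $((A-M)v,v)=(w,v)>0$. The Cauchy--Schwarz inequality for the positive form of $A-M$ gives
\[
0\neq (w,v)^{-1}(\cdot,w)\,w\leq A-M \quad\text{for all } A\in\frakM,
\]
which contradicts maximality.

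The paper's route is shorter once Theorem~\ref{thm4.2} is available. As written, however, it only gives the result under the extra hypothesis. The later uses of the corollary, in Section~\ref{sec5} and in Theorem~\ref{thm6.14}, involve finite sets, so nothing in the paper depends on this difference.
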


\begin{proof}
The first assertion is obvious. Let $\frakM_{1}$ be finite and
\[
M = \begin{pmatrix} M_{1} & M_{12} \\ M_{12}^{\ast} & M_{2} \end{pmatrix}
\]  
be a maximal lower bound of $\frakM$. Since
$\mathscr R(S)\subseteq\mathscr R(S^{\sfrac{1}{2}})$ for
$S\in\frakS^{+}(\mathscr H)$, from Theorem~\ref{thm4.2} follows
\[
  \bigcap_{A_{1}\in\frakM_{1}}
  \mathscr R\left(\begin{pmatrix} A_{1} & 0 \\ 0 & 0 \end{pmatrix}- M \right)
    = \{0\}.
\]  
This condition would not be satisfied if
$\begin{psmallmatrix} M_{12} \\ M_{2}\end{psmallmatrix}\neq 0$. Thus
$M=\begin{psmallmatrix} M_{1} & 0 \\ 0 & 0 \end{psmallmatrix}$
and, clearly, $M_{1}$ is a maximal lower bound of $\frakM_{1}$.
\end{proof}

\begin{corollary}
\label{cor4.3}
Let $\frakM\subseteq\frakS$. If $M\in\frakL$ and
\begin{equation}
\label{eq4.3}
\overline{\sum_{A\in\frakM}\mathscr N(A-M)} = \mathscr H,  
\end{equation}
the operator $M$ is a maximal element of $\frakL$. If, additionally,
$\frakM$ is a finite set and the ranges of all operators $A-M$,
$A\in\frakM$, are closed, then \eqref{eq4.3} is also necessary for the
maximality of $M$. 
\end{corollary}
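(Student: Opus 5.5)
Both assertions of Corollary~\ref{cor4.3} will follow from Theorem~\ref{thm4.2} by passing to orthogonal complements. For $A\in\frakM$ set $D_A:=A-M\in\frakS^{+}$. The key identity is
\[
\mathscr N(D_A)=\mathscr N\bigl(D_A^{\sfrac{1}{2}}\bigr)
=\mathscr R\bigl(D_A^{\sfrac{1}{2}}\bigr)^{\perp},
\]
together with the inclusions $\mathscr R(D_A)\subseteq\mathscr R(D_A^{\sfrac{1}{2}})\subseteq\overline{\mathscr R(D_A)}=\mathscr N(D_A)^{\perp}$.

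\emph{Sufficiency.} Assume \eqref{eq4.3} holds and let $w\in\bigcap_{A\in\frakM}\mathscr R(D_A^{\sfrac{1}{2}})$. Then $w\in\mathscr N(D_A)^{\perp}$ for every $A\in\frakM$. Hence $w$ is orthogonal to the linear span $\sum_{A}\mathscr N(D_A)$, and therefore to its closure, which is $\mathscr H$ by \eqref{eq4.3}. So $w=0$. Thus \eqref{eq4.2} holds, and the first part of Theorem~\ref{thm4.2} shows that $M$ is maximal.

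\emph{Necessity.} Let $\frakM$ be finite, let every $D_A$ have closed range, and let $M$ be maximal. The second part of Theorem~\ref{thm4.2} gives \eqref{eq4.2}. Since $\mathscr R(D_A)$ is closed, the chain of inclusions above collapses, so
\[
\mathscr R\bigl(D_A^{\sfrac{1}{2}}\bigr)=\mathscr R(D_A)=\mathscr N(D_A)^{\perp}.
\]
Condition \eqref{eq4.2} then reads $\bigcap_A\mathscr N(D_A)^{\perp}=\{0\}$. The intersection on the left equals $\bigl(\sum_A\mathscr N(D_A)\bigr)^{\perp}$, so this complement is trivial. It follows that $\overline{\sum_A\mathscr N(D_A)}=\mathscr H$, which is \eqref{eq4.3}.

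The only point needing care is the equality $\mathscr R(D^{\sfrac{1}{2}})=\mathscr R(D)$ for a positive operator $D$ with closed range. I would justify it this way. The restriction of $D$ to $\overline{\mathscr R(D)}$ is a positive, injective operator with closed range, hence boundedly invertible there by the open mapping theorem. Therefore its spectrum is bounded away from $0$, and the same holds for its square root. This gives $\mathscr R(D^{\sfrac{1}{2}})=\overline{\mathscr R(D)}=\mathscr R(D)$.

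Everything else is routine orthogonal-complement bookkeeping. Finiteness of $\frakM$ is used only through the second part of Theorem~\ref{thm4.2}.
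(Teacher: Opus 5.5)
Your proof is correct and follows the paper's argument. Both directions reduce to Theorem~\ref{thm4.2} via $\mathscr N(A-M)^{\perp}=\overline{\mathscr R\bigl((A-M)^{\sfrac{1}{2}}\bigr)}$ and $\bigl(\sum\mathscr F_\lambda\bigr)^{\perp}=\bigcap\mathscr F_\lambda^{\perp}$. Closedness of $\mathscr R(A-M)$ is used, as in the paper, to make $\mathscr R\bigl((A-M)^{\sfrac{1}{2}}\bigr)$ closed.

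Your extra spectral argument for $\mathscr R(D_A^{\sfrac{1}{2}})=\mathscr R(D_A)$ is valid but redundant. The chain $\mathscr R(D_A)\subseteq\mathscr R(D_A^{\sfrac{1}{2}})\subseteq\overline{\mathscr R(D_A)}$ already forces equality once $\mathscr R(D_A)$ is closed, which is exactly how the paper argues.
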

\begin{proof}
For a subset $\mathscr F$ of $\mathscr H$ set $\mathscr F^{\perp}:=
\{v\in\mathscr H:(u,v)=0\text{ for all }u\in\mathscr F\}$. Recall that
an arbitrary family $\{\mathscr F_{\lambda}:\lambda\in\Lambda\}$ of
subsets of $\mathscr H$ satisfies the relation
\[
  \Bigl(\sum_{\lambda\in\Lambda} \mathscr F_{\lambda}\Bigr)^{\perp}
  = \bigcap_{\lambda\in\Lambda} \mathscr F_{\lambda}^{\perp}.
\]  
Writing this equality down for the family
$\{\mathscr N(A-\frakM):A\in\frakM\}$, we obtain  
\begin{equation}
\label{eq4.3a}
\Bigr(\sum_{A\in\frakM}\mathscr N(A-M)\Bigr)^{\perp} =
\bigcap_{A\in\frakM}\bigl(\mathscr N(A-M)\bigr)^{\perp}.
\end{equation}
Noting that
\[
\Bigl(\mathscr N(A-M)\Bigr)^{\perp} =
\Bigl(\mathscr N\bigl((A-M)^{\sfrac{1}{2}}\bigr)\Bigr)^{\perp} =
\overline{\mathscr R\bigl((A-M)^{\sfrac{1}{2}}\bigr)},
\]
and taking the orthogonal complements on both sides of \eqref{eq4.3a},
we obtain that relation \eqref{eq4.3} and the equality
\begin{equation}
\label{eq4.3b}
\bigcap_{A\in\frakM}
\overline{\mathscr R\bigl((A-M)^{\sfrac{1}{2}}\bigr)}
= \{0\}
\end{equation}
are equivalent. It follows that \eqref{eq4.3} yields \eqref{eq4.2} and
the first assertion of the corollary is a consequence of the first
assertion of Theorem~\ref{thm4.2}. If for all $A\in\frakM$ the range
of $A-M$ is closed, we have
\[
\mathscr R\bigl((A-M)^{\sfrac{1}{2}}\bigr) =
\overline{\mathscr R\bigl((A-M)^{\sfrac{1}{2}}\bigr)}
\]
since 
\[
\mathscr R(A-M) \subseteq \mathscr R\bigl((A-M)^{\sfrac{1}{2}}\bigr)
\subseteq\overline{\mathscr R(A-M)}
\quad\text{and}\quad
\mathscr R(A-M) = \overline{\mathscr R(A-M)}.
\]  
Thus, \eqref{eq4.2} and \eqref{eq4.3} are equivalent in this case, and
the second assertion of the corollary follows from the second
assertion of Theorem~\ref{thm4.2}
\end{proof}

\begin{remark}
\label{rem4.4}  
In general \eqref{eq4.3} does not follow from \eqref{eq4.2} if the
ranges of $A-M$, $A\in\frakM$, are not closed, cf. remarks following
Lemma~\ref{lem4.1}. It is also interesting to compare the condition
\eqref{eq4.3} with the characterization \eqref{eq3.5} of the greatest
element.  
\end{remark}

We give an example, which indicates that there is only little chance to
weaken the finiteness condition of the second assertion of
Theorem~\ref{thm4.2}.

\begin{example}
\label{ex4.3}
The set 
\[
  \frakM := \left\{
    \begin{mypmatrix}
      1+\frac{1}{n} & \frac{1}{\sqrt{n}} \\
      \frac{1}{\sqrt{n}} & \frac{1}{n}
    \end{mypmatrix}
    : n\in\mathbb N\right\} \cup
  \left\{
    \begin{pmatrix}
      1 & 0 \\ 0 & 0
    \end{pmatrix}
  \right\}
\]
is a countable and compact subset of the set of $2\times2$ matrices.
If $L\in\frakL\cap\frakS^{+}$, it has the form
\[
  L =\begin{pmatrix} l & 0 \\ 0 & 0 \end{pmatrix}
  \quad\text{for some $l\in[0,1]$}.
\]
The determinant
\[
\det\left(
  \begin{mypmatrix}
    1+\frac{1}{n} & \frac{1}{\sqrt{n}} \\[3pt]
    \frac{1}{\sqrt{n}} & \frac{1}{n}
  \end{mypmatrix} - L\right) = \frac{1}{n^{2}}-\frac{l}{n}
\]
is negative for $n$ large enough if $l>0$. Thus, $0$ is the only
positive lower bound of $\frakM$ and $M=0$ is a maximal lower bound of
$\frakM$. If $M=0$, the left-hand side of \eqref{eq4.3} is
$\left\{\begin{psmallmatrix}0\\z\end{psmallmatrix}:z\in\mathbb C\right\}$,
which is different from $\mathbb C^{2}$.
\end{example}

The preceding example shows that the formulation of
Theorem~1.15 of \cite{Stott2017dis} is not quit correct, see, however,
its corrected version \cite[Thm.~1]{GaubertStott2017}.

Recall that, by definition, $L\in\frakL$ is an \emph{extreme point} of
the convex set $\frakL$ if equality $L=\frac{1}{2}(L_{1}+L_{2})$ for
some $L_{1},L_{2}\in\frakL$ implies that $L_{1}=L_{2}$. As an
application of Theorem~\ref{thm4.2} and relation~\eqref{eq4.3} we
establish a relation between the set of extreme points and the set of
maximal elements of $\frakL$.

\begin{theorem}
\label{thm4.4}
For a set $\frakM$, the set of extreme points of $\frakL$ is a subset
of the set $\Max\frakL$. If $M$ is a maximal element of $\frakL$
satisfying \eqref{eq4.3}, it is an extreme point of $\frakL$.
\end{theorem}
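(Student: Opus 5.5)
For the first assertion I argue by contraposition. Suppose $L\in\frakL$ is not maximal, so there is $L'\in\frakL$ with $L\leq L'$ and $L\neq L'$. Set $D:=L'-L\geq0$, $D\neq0$, and $L_{1}:=L-D$, $L_{2}:=L+D=L'$. Then $L_{2}\in\frakL$ by assumption, and $L_{1}\leq L\leq A$ for all $A\in\frakM$, so $L_{1}\in\frakL$ as well. Since $L=\frac12(L_{1}+L_{2})$ with $L_{1}\neq L_{2}$, $L$ is not an extreme point. Hence every extreme point of $\frakL$ lies in $\Max\frakL$. This step needs nothing beyond the fact that $\frakL$ is closed under subtracting positive operators.

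For the second assertion, let $M\in\Max\frakL$ satisfy \eqref{eq4.3}, and suppose $M=\frac12(L_{1}+L_{2})$ with $L_{1},L_{2}\in\frakL$. Put $D:=L_{1}-M=M-L_{2}\in\frakS$. The goal is to show $D=0$. For any $A\in\frakM$ we have $A-M-D=A-L_{1}\geq0$ and $A-M+D=A-L_{2}\geq0$.

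Now take $u\in\mathscr N(A-M)$. Then $(Du,u)\leq((A-M)u,u)=0$ and $-(Du,u)\leq0$, so $(Du,u)=0$, which gives $((A-L_{1})u,u)=0$. Since $A-L_{1}\geq0$, this yields $(A-L_{1})u=0$. Also $(A-M)u=0$, so $Du=(A-M)u-(A-L_{1})u=0$. Therefore $\mathscr N(A-M)\subseteq\mathscr N(D)$ for every $A\in\frakM$.

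Because $\mathscr N(D)$ is a closed subspace, it contains the closure of $\sum_{A\in\frakM}\mathscr N(A-M)$, which equals $\mathscr H$ by \eqref{eq4.3}. So $D=0$, hence $L_{1}=L_{2}=M$ and $M$ is extreme.

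The only step requiring care is the passage from $(Du,u)=0$ to $Du=0$. It works by applying the fact recalled in the introduction, that a positive operator with vanishing quadratic form at $u$ kills $u$, to the positive operator $A-L_{1}$. Maximality of $M$ is not actually needed in the second part, since \eqref{eq4.3} alone gives extremality, and it also gives maximality by Corollary~\ref{cor4.3}.
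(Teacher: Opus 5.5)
Your proof is correct and follows the paper's argument. The first part uses the same decomposition $L=\frac12\bigl[(2L-L')+L'\bigr]$; you take any strictly larger lower bound $L'$ rather than a maximal one from Theorem~\ref{thm2.1}, which avoids Zorn's lemma. The second part uses the same density argument: you verify by hand the inclusion $\mathscr N(A-M)\subseteq\mathscr N(A-L_{1})\cap\mathscr N(A-L_{2})$, which the paper obtains from $\mathscr N(S+T)=\mathscr N(S)\cap\mathscr N(T)$ for $S,T\in\frakS^{+}$. Your remark that maximality of $M$ is superfluous in the second assertion is also right: the paper's proof never uses it, and Corollary~\ref{cor4.3} gives it anyway.
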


\begin{proof}
If $L\in\frakL$ is not maximal and $M$ is a maximal element with
$L\leq M$, one can write
\[
  L=\tfrac{1}{2}\bigl[(2L-M)+M\bigr],
\]
where
$2L-M=L+(L-M)$ and $M$ are different elements of $\frakL$. Thus $L$
can't be an extreme point.
 
To prove the second assertion, note first that if $M$ is maximal and
$M=\frac{1}{2}(M_{1}+M_{2})$ with $M_{1},M_{2}\in\frakL$, then
necessarily $M_{1}$, $M_{2}$ are maximal elements. Recall that
$\mathscr N(S+T)=\mathscr N(S)\cap\mathscr N(T)$ for
$S,T\in\frakS^{+}$. If $M$ satisfies \eqref{eq4.3}, it follows
\begin{align*}
    \overline{\sum_{A\in\frakM} \mathscr N(A-M)} 
& = \overline{\sum_{A\in\frakM} \mathscr N
    \bigl(A-\tfrac{1}{2}(M_{1}+M_{2})\bigr)} \\
& = \overline{\sum_{A\in\frakM} \bigl(\mathscr N(A-M_{1})
    \cap\mathscr N(A-M_{2)})\bigr)}
  = \mathscr H.
\end{align*}
Since $M_{1}u=M_{2}u$, if $u\in\mathscr N(A-M_{1})\cap\mathscr
N(A-M_{2})$, we obtain $M_{1}=M_{2}$.
\end{proof}

The following discussion is motivated by
\cite[Sect.~4]{Stott2016arxiv}. Let $u$ be a unit vector of $\mathscr
H$. Set
\[
  \alpha :=\inf\bigl\{(Au,u):A\in\frakM\bigr\}, 
\]
and
\[  
  \frakM_{u} :=\bigl\{A\in\frakM:(Au,u)=\alpha\bigr\}, \quad
  \frakL_{u} :=\bigl\{L\in\frakL:(Lu,u)=\alpha\bigr\}.
\]
Let $\mathscr H_{1}$ be the space spanned by $u$ and $\mathscr H_{2}$
be its orthogonal complement. Denote by
\[
  S= \begin{pmatrix}
  s_{1} & S_{12} \\ S_{12}^{\ast} & S_{2}
  \end{pmatrix}
\]
a block operator representation of $S\in\frakS$ according to the
orthogonal decomposition $\mathscr H=\mathscr H_{1}\oplus\mathscr H_{2}$.

\begin{theorem}
\label{thm4.5}
Let $\frakM_{u}$ be a nonempty set. If $\frakL_{u}$ is not empty, then
\begin{equation}
\label{eq4.4}
Au = Bu,\;A,B\in\frakM_{u}.
\end{equation}
\end{theorem}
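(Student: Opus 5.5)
Fix $L\in\frakL_{u}$ and $A\in\frakM_{u}$. The plan is to show that $Au=Lu$. Since $L$ does not depend on $A$, this gives $Au=Lu=Bu$ for all $A,B\in\frakM_{u}$, which is \eqref{eq4.4}.

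The key observation is that $A-L\in\frakS^{+}$, because $L$ is a lower bound of $\frakM$. We also have
\[
\bigl((A-L)u,u\bigr)=(Au,u)-(Lu,u)=\alpha-\alpha=0,
\]
since $A\in\frakM_{u}$ and $L\in\frakL_{u}$. As recalled in Section~\ref{sec1}, for a positive operator $S$ the relation $(Su,u)=0$ is equivalent to $u\in\mathscr N(S)$. Hence $u\in\mathscr N(A-L)$, that is, $Au=Lu$.

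If one prefers to phrase this in the block notation just introduced, the argument uses Theorem~\ref{thm2.2} applied to $S=A-L$. Its $(1,1)$-entry is $s_{1}=a_{1}-l_{1}=\alpha-\alpha=0$, so condition (ii) forces $\mathscr R(A_{12}-L_{12})\subseteq\mathscr R(0)=\{0\}$, i.e.\ $A_{12}=L_{12}$. Together with $a_{1}=l_{1}$, this says that the first columns of $A$ and $L$ coincide, which is exactly $Au=Lu$.

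I do not expect a genuine obstacle. The only point to be careful about is the bookkeeping that $u$ is a unit vector, so that $(Au,u)=\alpha$ and $(Lu,u)=\alpha$ really do cancel. The argument also does not use nonemptiness of $\frakM_{u}$ beyond giving the statement content.
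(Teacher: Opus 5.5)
Your proof is correct and takes the same approach as the paper. The paper writes $A\in\frakM_{u}$ and $L\in\frakL_{u}$ in block form with $(1,1)$-entry $\alpha$, and uses condition (ii) of Theorem~\ref{thm2.2} for $A-L$ to get $A_{12}^{\ast}=L_{12}^{\ast}$, hence $Au=Lu$; that is exactly your second paragraph. Your main kernel argument ($A-L\geq 0$ and $((A-L)u,u)=0$ give $u\in\mathscr N(A-L)$) is the same observation stated directly, and it has the small bonus of not needing a nontrivial decomposition $\mathscr H=\mathscr H_{1}\oplus\mathscr H_{2}$.
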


\begin{proof}
For
\[
A=\begin{pmatrix}\alpha & A_{12}\\A_{12}^{\ast} & A_{2}\end{pmatrix}
\in\frakM_{u}\quad\text{and}\quad
L=\begin{pmatrix}\alpha & L_{12}\\L_{12}^{\ast} & L_{2}\end{pmatrix}
\in\frakL_{u},
\]
inequality $L\leq A$ yields $A_{12}^{\ast}=L_{12}^{\ast}$ by
Theorem~\ref{thm2.2}~(ii). It follows $Au=Lu$ for all $A\in\frakM_{u}$.
\end{proof}

Let $\frakM_{u}$ be not empty and \eqref{eq4.4} be satisfied. For
\[
B=\begin{pmatrix}\alpha & B_{12}\\B_{12}^{\ast} & B_{2}\end{pmatrix}
\in\frakM_{u}
\]
define
\begin{equation}
\label{eq4.5}
\frakM' := \{A_{2}-(a_{1}-\alpha)^{\#}(A_{12}-B_{12})^{\ast}(A_{12}-B_{12})
           : A\in\frakM\}.
\end{equation}
By \eqref{eq4.4} the definition of $\frakM'$ does not depend on the
choice of $B$. From Theorems~\ref{thm4.4} and \ref{thm2.2} one can
immediately derive the following description of the set $\frakL_{u}$,
which generalizes a result of \cite[Prop.~4.6]{Stott2016arxiv}.

\begin{theorem}
\label{thm4.6}
Let $\frakM_{u}$ be not empty and \eqref{eq4.4} be satisfied. An
operator $L$ is an element of $\frakL_{u}$ if and only if its block
operator representation is of the form
\begin{equation}
\label{eq4.6}
L=\begin{pmatrix}\alpha & B_{12}\\B_{12}^{\ast} & L_{2}\end{pmatrix}
\end{equation}
for some $L_{2}\in\frakL(\frakM')$. In particular, if $\frakM'$ is not
bounded from below, the set $\frakL_{u}$ is empty.
\end{theorem}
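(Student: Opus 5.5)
We fix $L\in\frakS$ with block representation $L=\begin{psmallmatrix} l_{1} & L_{12}\\ L_{12}^{\ast} & L_{2}\end{psmallmatrix}$ and translate the two conditions $(Lu,u)=\alpha$ and $L\leq A$ for all $A\in\frakM$ into conditions on the blocks. Throughout, Theorem~\ref{thm2.2} is applied to $S:=A-L$ with respect to $\mathscr H=\mathscr H_{1}\oplus\mathscr H_{2}$. Here $\dim\mathscr H_{1}=1$, so $S_{1}=a_{1}-\alpha$ is a scalar. Condition (ii) of Theorem~\ref{thm2.2} is automatic when $a_{1}>\alpha$. When $a_{1}=\alpha$, condition (ii) says that $S_{12}=0$. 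The Moore--Penrose inverse of the scalar $a_{1}-\alpha\geq0$ is $(a_{1}-\alpha)^{\#}$, which is $(a_{1}-\alpha)^{-1}$ or $0$.

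\emph{Necessity.} Let $L\in\frakL_{u}$, so $l_{1}=\alpha$. We pick $B\in\frakM_{u}$, which exists by hypothesis. The argument in the proof of Theorem~\ref{thm4.5} gives $L_{12}=B_{12}$, so $L$ has the shape \eqref{eq4.6}. Now let $A\in\frakM$ be arbitrary. Then $S=A-L$ has blocks $a_{1}-\alpha\geq0$, $A_{12}-B_{12}$ and $A_{2}-L_{2}$. Positivity of $S$ together with condition (iii) of Theorem~\ref{thm2.2} gives
\[
A_{2}-L_{2}-(a_{1}-\alpha)^{\#}(A_{12}-B_{12})^{\ast}(A_{12}-B_{12})\geq0.
\]
In the case $a_{1}=\alpha$ the correction term vanishes, and the inequality simply reads $A_{2}-L_{2}\geq0$. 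Hence $L_{2}$ is below every element of $\frakM'$, that is, $L_{2}\in\frakL(\frakM')$.

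\emph{Sufficiency.} Suppose $L$ has the form \eqref{eq4.6} with $L_{2}\in\frakL(\frakM')$. For each $A\in\frakM$ we verify the three conditions of Theorem~\ref{thm2.2} for $A-L$. Condition (i) holds because $a_{1}\geq\alpha$ by the definition of $\alpha$. Condition (iii) is exactly the defining inequality $L_{2}\leq$ the corresponding element of $\frakM'$. For condition (ii), we split into two cases. If $a_{1}>\alpha$, condition (ii) is trivial. If $a_{1}=\alpha$, then $A\in\frakM_{u}$, and hypothesis \eqref{eq4.4} gives $Au=Bu$, which means $A_{12}^{\ast}=B_{12}^{\ast}$ and hence $A_{12}-B_{12}=0$. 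So $L\leq A$ for every $A\in\frakM$, i.e.\ $L\in\frakL$, and $(Lu,u)=\alpha$ by construction. Therefore $L\in\frakL_{u}$.

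The final claim is immediate: if $\frakM'$ is not bounded from below, then $\frakL(\frakM')=\emptyset$, and the characterization just proved forces $\frakL_{u}=\emptyset$.

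The only delicate point is the degenerate case $a_{1}=\alpha$. There one must use the convention $0^{\#}=0$, and one needs \eqref{eq4.4} in the sufficiency direction. The same equality is what makes $\frakM'$ independent of the choice of $B$.
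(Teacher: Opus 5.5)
Your proof is correct and follows the route the paper indicates. The paper only says the result follows immediately from Albert's criterion (Theorem~\ref{thm2.2}) together with the preceding result, Theorem~\ref{thm4.5}, which gives $L_{12}=B_{12}$. Your write-up supplies the omitted details: Theorem~\ref{thm2.2} applied to $A-L$ with scalar corner $a_{1}-\alpha$, and \eqref{eq4.4} used to secure condition~(ii) in the degenerate case $a_{1}=\alpha$.
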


As the following example reveals, it is possible that
$\frakL(\frakM')$ is empty even if \eqref{eq4.4} is satisfied. 

\begin{example}
\label{ex4.7}
The set 
\[
  \frakM := \left\{
    \begin{mypmatrix}
      1+\frac{1}{n^{2}} & \frac{1}{\sqrt{n}} \\
      \frac{1}{\sqrt{n}} & \frac{1}{n}
    \end{mypmatrix}
    : n\in\mathbb N\right\} \cup
  \left\{
    \begin{pmatrix}
      1 & 0 \\ 0 & 0
    \end{pmatrix}
  \right\}
\]
is a countable and compact subset of the set of $2\times 2$
matrices. Choosing $u:=\begin{psmallmatrix} 1 \\
  0 \end{psmallmatrix}$, we have $\alpha=1$,
\[
  \frakM_{u}=\left\{ \begin{pmatrix}
     1 & 0 \\ 0 & 0
    \end{pmatrix}\right\},
\]
and \eqref{eq4.4} is satisfied. The set
\[
  \frakM'=\left\{\frac{1}{n}-\left(\frac{1}{n^{2}}\right)^{2}
  \frac{1}{\sqrt{n}}\frac{1}{\sqrt{n}}
  : n\in\mathbb N\right\}\cup\{0\} =
  \left\{\frac{1}{n}-n : n\in\mathbb N\right\} \cup\{0\}
\]  
is not bounded from below and according to Theorem~\ref{thm4.5}, the
set $\frakL_{u}$ must be empty. This can also be seen directly. If
\[
L =   
\begin{pmatrix}
     l_{1} & l_{12} \\ l_{12}^{\ast} & l_{2}
    \end{pmatrix},
\]
would be a lower bound of $\frakM$ satisfying $(Lu,u)=1$, we must have
$l_{1}=1$ and the determinant
\[
\det \left(
  \begin{mypmatrix}
  1+\frac{1}{n^{2}} & \frac{1}{\sqrt{n}} \\
  \frac{1}{\sqrt{n}} & \frac{1}{n}
  \end{mypmatrix}
  - L \right) =
\frac{1}{n^{2}}- \frac{l_{2}}{n^{2}} - \left|\frac{1}{\sqrt{n}}-l_{12}\right|^{2}
\]  
must be nonnegative for all $n\in\mathbb N$, a contradiction.
\end{example}

If $\frakM_{u}$ is empty, condition \eqref{eq4.4} is trivially
satisfied. Also in this case, there are examples, where $\frakL_{u}$
is not empty, as well as examples, where $\frakL_{u}$ is empty.

\begin{example}
\label{ex4.8}
(i) If
\[
  \frakM := \left\{
    \begin{pmatrix}
      1+\frac{1}{n} & 1 \\
      1             & 1
    \end{pmatrix}
    : n\in\mathbb N\right\} \quad\text{and}\quad
  u:= \begin{pmatrix} 1 \\ 0 \end{pmatrix},
\]
the set $\frakM_{u}$ is empty. The set $\frakL_{u}$ is not empty since
\[
  \begin{pmatrix} 1 & 1 \\ 1 & 1 \end{pmatrix} \in\frakL_{u}.
\]
(ii) The set
\[
  \frakM := \left\{
    \begin{pmatrix}
      1+\frac{1}{n} & 1 \\
      1             & 1
    \end{pmatrix}
    : n\in\mathbb N\right\}
    \cup
    \left\{
    \begin{pmatrix}
      1+\frac{1}{n} & 2 \\
      2             & 4
    \end{pmatrix}
    : n\in\mathbb N\right\}
\]  
and its closure
\[
  \overline{\frakM} =
  \frakM \cup
  \left\{
     \begin{pmatrix} 1 & 1 \\ 1 & 1 \end{pmatrix}, 
     \begin{pmatrix} 1 & 2 \\ 2 & 4 \end{pmatrix}
  \right\}
\]  
have the same lower bounds. If $u:= \begin{psmallmatrix}1\\0\end{psmallmatrix}$
one obtains
\[
  \frakM_{u} =
  \left\{
    \begin{pmatrix} 1 & 1 \\ 1 & 1 \end{pmatrix},
    \begin{pmatrix} 1 & 2 \\ 2 & 4 \end{pmatrix}
  \right\}
\]  
and $\frakL_{u}(\frakM)=\frakL_{u}(\overline\frakM)$ is empty by
Theorem~\ref{thm4.5}. 
\end{example}

The following description of the maximal elements of $\frakL_{u}$ is
an immediate consequence of Theorem~\ref{thm4.5}.

\begin{corollary}
\label{cor4.9}
Let $M$ be a lower bound of $\frakM$. Under the conditions of
Theorem~\ref{thm4.5} the following assertions are equivalent:
\begin{enumerate}
\item[(i)]
the operator $M$ is a maximal element of $\frakL_{u}$,
\item[(ii)]  
the operator $M$ is a maximal element of $\frakL$ and satisfies
$(Mu,u)=\alpha$,
\item[(iii)]
the operator $M$ has a block operator representation
\[
M =   
\begin{pmatrix} \alpha & B_{12} \\ B_{12}^{\ast} & M_{2}\end{pmatrix},
\text{ where }
\begin{pmatrix} \alpha & B_{12} \\ B_{12}^{\ast} & B_{2}\end{pmatrix}
\in\frakM_{u}
\]  
and $M_{2}$ is a maximal lower bound of $\frakM'$.
\end{enumerate}
\end{corollary}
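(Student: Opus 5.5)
The plan is to prove the cycle (i)$\Rightarrow$(ii)$\Rightarrow$(iii)$\Rightarrow$(i). Throughout we use Theorem~\ref{thm4.5}, which gives \eqref{eq4.4}, and Theorem~\ref{thm4.6}, which describes $\frakL_{u}$ as the set of operators of the form \eqref{eq4.6} with $L_{2}\in\frakL(\frakM')$.

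The key preliminary observation is that $\frakL_{u}$ is an upper set in $\frakL$: if $L\in\frakL_{u}$, $L'\in\frakL$ and $L\leq L'$, then $L'\in\frakL_{u}$. Indeed,
\[
\alpha=(Lu,u)\leq(L'u,u)\leq(Au,u)=\alpha \quad\text{for } A\in\frakM_{u},
\]
so $(L'u,u)=\alpha$. This is exactly where nonemptiness of $\frakM_{u}$ is used.

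For (i)$\Rightarrow$(ii), let $M$ be maximal in $\frakL_{u}$ and let $L'\in\frakL$ with $M\leq L'$. The observation puts $L'$ in $\frakL_{u}$, and maximality there gives $L'=M$. The condition $(Mu,u)=\alpha$ holds by the definition of $\frakL_{u}$.

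For (ii)$\Rightarrow$(i), let $M$ be maximal in $\frakL$ with $(Mu,u)=\alpha$. Then $M\in\frakL_{u}$. Any $L'\in\frakL_{u}$ with $M\leq L'$ lies in $\frakL$, so $L'=M$.

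For (ii)$\Leftrightarrow$(iii), we transport the order through the parametrization of Theorem~\ref{thm4.6}. By Theorem~\ref{thm4.6}, $M\in\frakL_{u}$ if and only if $M$ has the form \eqref{eq4.6} with $M_{2}\in\frakL(\frakM')$. The first row and column of such an $M$ are the common ones $\alpha$ and $B_{12}$, independent of $M$. Hence for two elements $L,L'$ of $\frakL_{u}$, the difference $L'-L$ is $\begin{psmallmatrix}0&0\\0&L_{2}'-L_{2}\end{psmallmatrix}$. Therefore $L\leq L'$ if and only if $L_{2}\leq L_{2}'$.

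So the map $L\mapsto L_{2}$ is an order isomorphism from $\frakL_{u}$ onto $\frakL(\frakM')$. Maximal elements of $\frakL_{u}$ thus correspond exactly to maximal lower bounds of $\frakM'$. This gives (i)$\Leftrightarrow$(iii), and combined with (i)$\Leftrightarrow$(ii) closes the cycle.

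No step is a real obstacle. The only point needing care is to state cleanly that elements of $\frakL_{u}$ share the off-diagonal block $B_{12}$. This follows from Theorem~\ref{thm4.5}, using Theorem~\ref{thm2.2}(ii) applied to $A-L$ with $A\in\frakM_{u}$, whose $(1,1)$ entry is $0$.
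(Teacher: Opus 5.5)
Your proof is correct and is essentially the paper's argument. The paper records the corollary as an immediate consequence of the description of $\frakL_{u}$ (Theorem~\ref{thm4.6}), and your two ingredients are exactly the unpacking of that claim: that $\frakL_{u}$ is an upper set in $\frakL$, and that $L\mapsto L_{2}$ is an order isomorphism of $\frakL_{u}$ onto $\frakL(\frakM')$ because all elements of $\frakL_{u}$ share the entries $\alpha$ and $B_{12}$.

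One small correction to your remark: the upper-set property does not need $\frakM_{u}\neq\emptyset$, since $(L'u,u)\leq\inf\{(Au,u):A\in\frakM\}=\alpha$ for every $L'\in\frakL$. Nonemptiness of $\frakM_{u}$, together with \eqref{eq4.4}, is what makes $B_{12}$ and $\frakM'$ well defined.
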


\begin{remark}
\label{rem4.10}
Let $\mathscr H$ be separable and $\frakM$ be a bounded subset of
$\frakS$. The operator norm closure $\overline{\frakM}$ of $\frakM$ is
weak-operator compact, cf.\
\cite[Thm.~5.1.3]{KadisonRingrose1983}. Analyzing the proof of
Corollary~\ref{cor3.4} above, we conclude that for the weak-operator
compact set $\overline{\frakM}$ and an arbitrary unit vector $u$, the
set $(\overline{\frakM})_{u}$ is not empty. Since
$\frakL(\frakM)=\frakL(\overline{\frakM})$ by \nameref{fa2.d}, the set $\frakM$
can be replaced by $\overline{\frakM}$, which shows that for a
separable Hilbert space $\mathscr H$, Theorems~\ref{thm4.5}, 
\ref{thm4.6} and Corollary~\ref{cor4.9} can be applied to an arbitrary
bounded subset $\frakM$ of $\frakS$ with obvious modifications.
\end{remark}

We conclude the present section by proving the existence of a positive
maximal lower bound for an arbitrary set of positive $n\times n$
matrices without referring to Zorn's lemma explicitly. The proof is
by induction over the order $n$ of the matrices and could serve as a
basis of an algorithm for computing a positive maximal lower bound.

\begin{theorem}
\label{thm4.11}
Let $n\in\mathbb N$ and $\frakM$ be a set of positive $n\times n$
matrices. There exists a positive maximal lower bound of $\frakM$.
\end{theorem}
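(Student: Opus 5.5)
The plan is to avoid Zorn's lemma with a compactness argument in the finite-dimensional space $\frakS$ of Hermitian $n\times n$ matrices, maximising a strictly monotone linear functional (the trace) over the positive lower bounds. This gives existence directly, without the induction over $n$ announced before the statement.

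\emph{Setting up the compact set.} Let $\frakK:=\frakL\cap\frakS^{+}$ be the set of positive lower bounds of $\frakM$.
\begin{itemize}
\item $\frakK$ is nonempty, since $0\in\frakK$ because every element of $\frakM$ is positive.
\item $\frakK$ is closed, because $\frakL$ is closed by \nameref{fa2.f} and $\frakS^{+}$ is closed.
\item $\frakK$ is bounded. Fix some $A_{0}\in\frakM$. Every $L\in\frakK$ satisfies $0\leq L\leq A_{0}$, hence $\|L\|\leq\|A_{0}\|$.
\end{itemize}
Since $\dim\mathscr H=n<\infty$, the set $\frakK$ is compact in the norm topology of $\frakS$.

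\emph{Choosing the candidate.} The trace $\operatorname{tr}$ is a continuous real linear functional on $\frakS$. It therefore attains its maximum on $\frakK$ at some $M\in\frakK$. By construction $M$ is a positive lower bound of $\frakM$.

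\emph{Checking maximality in all of $\frakL$.} Let $L\in\frakL$ with $M\leq L$.
\begin{itemize}
\item Then $L\geq M\geq0$, so $L\in\frakK$, and maximality of the trace gives $\operatorname{tr}L\leq\operatorname{tr}M$.
\item On the other hand $L-M\geq0$, so $\operatorname{tr}(L-M)\geq0$. Hence $\operatorname{tr}(L-M)=0$.
\item A positive matrix with zero trace has all eigenvalues equal to zero, so it is the zero matrix. Thus $L=M$.
\end{itemize}
Consequently $M\in\Max\frakL$, and $M$ is positive.

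\emph{Where the difficulty lies.} The argument has no real obstacle. The one point needing care is that $\frakM$ may be infinite and unbounded, so compactness must come from the bound $L\leq A_{0}$ for a single fixed element, not from $\frakM$ itself. Note also that finite dimensionality enters twice: in the compactness of $\frakK$, and in the existence of a faithful finite trace.

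\emph{If an inductive, algorithmic proof is preferred.} In the spirit of Corollary~\ref{cor4.9}, I would pick a unit vector $u$ and replace $\frakM$ by its closure (allowed by \nameref{fa2.d}). I would then fix the $(1,1)$-entry at $\alpha$ and pass to the compressed set $\frakM'$ of \eqref{eq4.5} on the $(n-1)$-dimensional complement. The hard step there would be showing that $\frakM_{u}$ is nonempty, that \eqref{eq4.4} holds for a suitable choice of $u$, and that $\frakM'$ is a set of positive matrices, so that the induction hypothesis applies. Example~\ref{ex4.7} shows that this can fail for a careless choice of $u$. For this reason the trace argument above is my primary route.
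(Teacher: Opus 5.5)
Your proof is correct, and it follows a genuinely different route from the paper's.

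\emph{The paper's route.} The paper argues by induction on $n$.
\begin{enumerate}
\item[(1)] It shifts $\frakM$ by $\gamma I$, where $\gamma$ is the infimum of $(Au,u)$ over $A\in\frakM$ and unit vectors $u$.
\item[(2)] It takes a limit $u$ of unit vectors $u_k$ with $(A_ku_k,u_k)<1/k$. Every positive lower bound $L$ then satisfies $(Lu,u)=\lim_k(Lu_k,u_k)\le\lim_k(A_ku_k,u_k)=0$.
\item[(3)] Via Theorem~\ref{thm2.2}, the positive lower bounds are exactly the operators $\begin{psmallmatrix}0&0\\0&L_2\end{psmallmatrix}$, with $L_2$ a positive lower bound of the Schur complements $\frakM/\mathscr H_1$ on $u^{\perp}$. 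The induction hypothesis applies to this set.
\end{enumerate}
This sidesteps exactly the obstacles you anticipated for an inductive proof. It never needs $\frakM_u$ to be nonempty, nor condition \eqref{eq4.4}, nor the set $\frakM'$. Step (2) uses only the continuity of $L$, not boundedness of $\frakM$. And $\frakM/\mathscr H_1$ is automatically positive by Theorem~\ref{thm2.2}~(iii).

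\emph{Common core.} Both proofs rest on one observation: anything above a positive lower bound is itself positive. So maximality only has to be checked inside $\frakL\cap\frakS^{+}$.

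\emph{What your route buys.}
\begin{enumerate}
\item[(a)] Brevity, with no block-operator calculus at all.
\item[(b)] It is no less algorithmic. For finite $\frakM$, maximizing $\operatorname{tr}L$ subject to $0\le L\le A$, $A\in\frakM$, is a semidefinite program. Maximizing $\operatorname{tr}(SL)$ with $S$ positive definite yields further maximal bounds.
\item[(c)] It extends beyond matrices. Let $\mathscr H$ be separable with orthonormal basis $(u_k)$. The functional $L\mapsto\sum_k2^{-k}(Lu_k,u_k)$ is faithful on $\frakS^{+}$ and weak-operator continuous on bounded sets. The set $\frakL\cap\frakS^{+}$ is bounded and weak-operator closed, hence weak-operator compact. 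So your argument works there essentially verbatim, which removes both uses of finite dimensionality you pointed out.
\end{enumerate}

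\emph{What the paper's route buys.} It is elementary in a different sense: each step needs only a near-minimizing unit vector and a Schur complement with respect to a one-dimensional block. It also produces a maximal bound with explicit structure, namely one satisfying $(Mu,u)=\gamma$, with the remaining problem reduced to $u^{\perp}$.
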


\begin{proof}
Induction over $n$. If $n=1$ a positive maximal lower bound exists and
is even an infimum. Let $\frakM$ be a set of positive
$(n+1)\times(n+1)$ matrices and let
\[
\gamma:=\inf\bigl\{(Au,u):A\in\frakM,u\in\mathscr H,\|u\|=1\bigr\}.
\]
Replacing $\frakM$ by $\frakM-\gamma I$, we can assume that $\gamma=0$.
Let $(A_{k})$ be a sequence of matrices of $\frakM$ and $(u_{k})$ be a
sequence of unit vectors of $\mathbb C^{n+1}$ satisfying
\[
  (A_{k}u_{k},u_{k)}) < \frac{1}{k}.
\]
Choosing a subsequence if necessary, we can also assume that a unit
vector  $u:=\lim_{k\to\infty}u_{k}$ exists. If $L$ is a positive lower
bound of  $\frakM$, it follows
\[
  0\leq(Lu,u)=\lim_{k\to\infty}(Lu_{k},u_{k})
  \leq\lim_{k\to\infty}(A_{k}u_{k},u_{k})=0,
\]
hence, $u\in\mathscr N(L)$. Let $\mathscr H_{1}$ be the space spanned
by $u$ and $\mathscr H_{2}$ its orthogonal complement. Write
\[
  A=\begin{pmatrix}a_{1}&A_{12}\\A_{12}^{\ast}&A_{2}\end{pmatrix}
\]
according to the orthogonal decomposition $\mathbb C^{n+1}=\mathscr
H_{1}\oplus\mathscr H_{2}$. From Theorem~\ref{thm2.2} (iii) it follows
that the set
\[
\frakM/\mathscr H_{1}:=\bigl\{
A/a_{1}=A_{2}-a_{1}^{\#}A_{12}^{\ast}A_{12} : A\in\frakM\bigr\} 
\]
is a set of positive $n\times n$ matrices. By the induction assumption,
there exists a positive maximal lower bound of $\frakM/\mathscr
H_{1}$. Denote it by $M_{2}$. It follows that $A/a_{1}-M_{2}\geq0$,
$A\in\frakM$, and again by Theorem~\ref{thm2.2} that
\[
M:=\begin{pmatrix}0&0\\0&M_{2}\end{pmatrix}
\]
is a positive lower bound of $\frakM$. To show that $M$ is maximal, let
\[
L:=\begin{pmatrix}l_{1}&L_{12}\\L_{12}^{\ast}&L_{2}\end{pmatrix}
\]
be a lower bound of $\frakM$ with $M\leq L$. Since $L$ is positive, it
has the form
\[
L:=\begin{pmatrix}0&0\\0&L_{2}\end{pmatrix}
\]
where $A/a_{1}-L_{2}\geq0$, $A\in\frakM$. Thus, $L_{2}$ is a lower
bound of $\frakM/\mathscr H_{1}$ and $M_{2}\leq L_{2}$. Since $M_{2}$
is a maximal lower bound, it follows that $L_{2}=M_{2}$, which yields
$M=L$.  
\end{proof}  

\section{Maximal lower bounds of a two-element set}
\label{sec5}
Let $\mathscr H$ be an arbitrary Hilbert space over $\mathbb C$. If
$\frakM:=\{A,B\}$ is a set of two operators on $\mathscr H$, some
maximal lower bounds can be expressed in terms of $A$ and $B$
explicitly. For a bounded operator $T$ on $\mathscr H$ set $|T|:=(T^{\ast}T)^{\sfrac{1}{2}}$.

\begin{theorem}
\label{thm5.1}  
For any bounded and boundedly invertible linear operator $T$ on
$\mathscr H$, the operator
\begin{equation}
\label{eq5.1}
M_{T}:=\frac{1}{2}\Bigl(A+B
-T^{\ast}\bigl|(T^{\ast})^{-1}(A-B)T^{-1}\bigr| T\Bigr) 
\end{equation}
is a maximal lower bound of the set $\frakM=\{A,B\}$.
\end{theorem}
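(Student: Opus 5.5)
First I use \eqref{eq2.3} and \eqref{eq2.4} to reduce to $T=I$. Put $C:=(T^{\ast})^{-1}(A-B)T^{-1}$, which is self-adjoint. A direct computation gives
\[
(T^{\ast})^{-1}M_{T}T^{-1}=\tfrac12\bigl(A'+B'-|A'-B'|\bigr),\qquad A':=(T^{\ast})^{-1}AT^{-1},\; B':=(T^{\ast})^{-1}BT^{-1},
\]
and $A'-B'=C$. By \eqref{eq2.4} (applied with the invertible operator $(T^{\ast})^{-1}$), $M_T$ is a maximal lower bound of $\{A,B\}$ if and only if $M_I'$ built from $A',B'$ is a maximal lower bound of $\{A',B'\}$. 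Hence it suffices to show that
\[
M:=\tfrac12\bigl(A+B-|A-B|\bigr)
\]
is a maximal lower bound of $\{A,B\}$ for arbitrary $A,B\in\frakS$.

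Put $D:=A-B$, and let $D=D_{+}-D_{-}$ with $D_{\pm}\geq 0$, $D_{+}D_{-}=0$, so that $|D|=D_{+}+D_{-}$. Then
\[
A-M=\tfrac12(D+|D|)=D_{+}\geq0,\qquad B-M=\tfrac12(|D|-D)=D_{-}\geq0 .
\]
So $M\in\frakL$.

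For maximality I invoke the first assertion of Theorem~\ref{thm4.2}. It suffices to show
\[
\mathscr R(D_{+}^{\sfrac12})\cap\mathscr R(D_{-}^{\sfrac12})=\{0\}.
\]
Let $E$ be the spectral projection of $D$ for $(0,\infty)$. Then $D_{+}=ED E$ vanishes on $\mathscr R(I-E)$, and $D_{-}$ vanishes on $\mathscr R(E)$. Since $D_{\pm}^{\sfrac12}$ are functions of $D$, they commute with $E$. Consequently $\mathscr R(D_{+}^{\sfrac12})\subseteq\mathscr R(E)$ and $\mathscr R(D_{-}^{\sfrac12})\subseteq\mathscr R(I-E)$. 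These are orthogonal subspaces, so their intersection is $\{0\}$, which is \eqref{eq4.2}.

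Condition \eqref{eq4.3} can fail here, because $\mathscr N(D)$ need not be spanned and the ranges need not be closed. This is why I rely on Theorem~\ref{thm4.2} rather than Corollary~\ref{cor4.3}. Theorem~\ref{thm4.2} requires only the intersection of the square-root ranges to be trivial, which holds for spectral reasons whether or not the ranges are closed.

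The only step needing care is the congruence reduction. I must check that the transformed operator is exactly $T^{\ast}(\cdots)T$, using $|C|$ with $C=(T^{\ast})^{-1}(A-B)T^{-1}$ and the identity $T^{\ast}\bigl((T^{\ast})^{-1}(A+B)T^{-1}\bigr)T=A+B$. I also need that \eqref{eq2.4} transports maximal elements, and not just the sets $\frakL$. This holds because $L\mapsto T^{\ast}LT$ is an order isomorphism of $\frakS$ when $T$ is boundedly invertible. Once this is verified, the rest is the spectral computation above.
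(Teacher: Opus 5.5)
Your proof is correct and is essentially the paper's. Both reduce to $T=I$ via \eqref{eq2.4} and use the spectral splitting $A-M=D_{+}$, $B-M=D_{-}$ of $D=A-B$; you check \eqref{eq4.2} via Theorem~\ref{thm4.2}, whereas the paper checks \eqref{eq4.3} via Corollary~\ref{cor4.3}.

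Your side remark that \eqref{eq4.3} can fail here is wrong, although it does not affect your argument. With your $E$, $\mathscr N(D_{+})=\mathscr R(I-E)$ and $\mathscr N(D_{-})\supseteq\mathscr R(E)$, so $\mathscr N(A-M)+\mathscr N(B-M)=\mathscr H$ holds exactly, without taking closures or assuming closed ranges. This is Corollary~\ref{cor5.2}, which the paper uses later. Moreover, your inclusions of the square-root ranges into the closed orthogonal subspaces $\mathscr R(E)$ and $\mathscr R(I-E)$ already give \eqref{eq4.3b}, which is equivalent to \eqref{eq4.3}.
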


\begin{proof}
Let $T=I$ and
\[
  A-B = \int_{a}^{b} t\,E(dt)
\]
be the spectral representation of the self-adjoint operator $A-B$. It
is easy to see that
\[
  \mathscr N(A-M_{I})=\mathscr R\bigl(E([a,0])\bigr)
  \quad\text{and}\quad
  \mathscr N(B-M_{I})=\mathscr R\bigl(E([0,b])\bigr),
\]
which yields $M_{I}\in\Max\frakL$ by \eqref{eq4.3}. The general result
is a consequence of \eqref{eq2.4}.
\end{proof}

\begin{corollary}
\label{cor5.2}
For a set $\frakM=\{A,B\}$ of two operators, there exists a maximal
lower bound $M$ satisfying
$\mathscr N(A-M)+\mathscr N(B-M)=\mathscr H$.
\end{corollary}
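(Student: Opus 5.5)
Take $M:=M_{I}=\frac{1}{2}\bigl(A+B-|A-B|\bigr)$, the operator of Theorem~\ref{thm5.1} with $T=I$. By Theorem~\ref{thm5.1} it is a maximal lower bound of $\frakM=\{A,B\}$, so it only remains to verify the algebraic (not merely closure) identity $\mathscr N(A-M)+\mathscr N(B-M)=\mathscr H$.

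Write $D:=A-B$ with spectral representation $D=\int_{a}^{b}t\,E(dt)$, and set $D_{+}:=\int t^{+}\,E(dt)$ and $D_{-}:=\int t^{-}\,E(dt)$, so that $D=D_{+}-D_{-}$ and $|D|=D_{+}+D_{-}$. Then
\[
A-M=\tfrac{1}{2}\bigl(D+|D|\bigr)=D_{+},\qquad
B-M=\tfrac{1}{2}\bigl(|D|-D\bigr)=D_{-}.
\]
Both differences are positive, which gives again that $M\in\frakL$. By the functional calculus, $\mathscr N(D_{+})=\mathscr R\bigl(E((-\infty,0])\bigr)$, since the function $t^{+}$ vanishes exactly for $t\leq 0$. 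Likewise $\mathscr N(D_{-})=\mathscr R\bigl(E([0,\infty))\bigr)$; this is the computation already indicated in the proof of Theorem~\ref{thm5.1}.

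Since $E((-\infty,0])+E((0,\infty))=I$ and $\mathscr R\bigl(E((0,\infty))\bigr)\subseteq\mathscr R\bigl(E([0,\infty))\bigr)$, every $u\in\mathscr H$ decomposes as
\[
u=E((-\infty,0])u+E((0,\infty))u\in\mathscr N(A-M)+\mathscr N(B-M).
\]
This gives the claimed equality as an honest algebraic sum of subspaces, with no closure needed.

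The only point requiring care is precisely this distinction between the algebraic sum and the closure in \eqref{eq4.3}. It is settled by the fact that the two spectral subspaces are complementary ranges of projections, so the sum is already all of $\mathscr H$. Note finally that the same argument with an arbitrary $T$, transported via \eqref{eq2.4}, would give the corresponding statement for $M_{T}$, but the corollary only needs $T=I$.
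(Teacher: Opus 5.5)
Your proof is correct and follows essentially the paper's route. The corollary is read off from the proof of Theorem~\ref{thm5.1}: for $T=I$ one has $\mathscr N(A-M_{I})=\mathscr R\bigl(E([a,0])\bigr)$ and $\mathscr N(B-M_{I})=\mathscr R\bigl(E([0,b])\bigr)$, whose algebraic sum is already $\mathscr H$. You make explicit the identities $A-M_{I}=D_{+}$ and $B-M_{I}=D_{-}$, and the decomposition $u=E((-\infty,0])u+E((0,\infty))u$, which the paper leaves to the reader.
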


Note that for a unitary operator $U$ on $\mathscr H$ the equality
\[
  U^{\ast}\bigl|U(A-B)U^{\ast}\bigr|U = |A-B|
\]
is true. If $T$ is bounded and boundedly invertible, it has a polar
decomposition $T=U|T|$, where $U$ is unitary. It follows that
$M_{T}=M_{|T|}$, and the sets
\[
\bigl\{M_{T}: \text{T is bounded and boundedly invertible}\}
\]
and
\begin{equation}
\label{eq5.2}
\bigl\{ M_{S} : \text{$S\in\frakS^{+}$, $S$ is boundedly invertible}\bigr\} 
\end{equation}
are equal.

Stott \cite[Thm.~2.3]{Stott2017dis}, cf.\
\cite[Thm.~1]{GaubertStott2017}, has shown by solving a linear maximum
problem that the set described by \eqref{eq5.2} coincides with the set
$\Max\frakL$ if $\dim\mathscr H<\infty$. Note, however, that in
general the correspondence $S\mapsto M_{S}$ is not one-to-one. For if
$S$ leaves invariant the spaces $\mathscr R\bigl(E[a,0)\bigr)$,
$\mathscr R\bigl(E(\{0\})\bigr)$, and $\mathscr R\bigl(E(0,b]\bigr)$,
the operator $M_{S}$ is equal to $M_{I}$. If $\dim\mathscr H=\infty$,
the set \eqref{eq5.2} is, in general, a proper subset of
$\Max\frakL$. To see this, choose $A,B\in\frakS^{+}$ such that
$\mathscr N(A)=\mathscr N(B)=\{0\}$ and
$\mathscr R(A^{\sfrac{1}{2}})\cap\mathscr R(B^{\sfrac{1}{2}})=\{0\}$.
By Lemma~\ref{lem4.1} the operator $0$ is the only positive lower
bound of the set $\frakM=\{A,B\}$, hence $0\in\Max\frakL$
and $\mathscr N(A-0)+\mathscr N(B-0)=\{0\}$. Since for every $M_{T}$
of \eqref{eq5.1}, the relation
\[
  \overline{\mathscr N(A-M_{T})+\mathscr N(B-M_{T})}=\mathscr H
\]  
is satisfied, the operator $0$ differs from all
operators $M_{T}$ of the form \eqref{eq5.1}. Note that in the just
mentioned example all operators $M_{T}$ are not positive. To explain
this, recall that the function $T\mapsto|T|$ satisfies the triangle
inequality only in a weakened form,
\cf \cite[Thm. III.5.6]{Bhatia1997}.

Setting $T=I$, formula \eqref{eq5.1} generalizes the well known
formula  for the minimum of two real numbers, and the following
continuity assertion can be derived immediately. The statement of a
similar  measurability result is omitted.

\begin{corollary}
\label{cor5.3}
Let $(\mathscr T,\tau)$ be a topological space with topology $\tau$ and
$\tau_{1}$ be a vector topology on $\frakS$ such that the map
$S\mapsto |S|$, $S\in\frakS$, is continuous. For any
$(\tau,\tau_{1})$-continuous functions $\Phi:\mathscr T\to\frakS$ and
$\Psi:\mathscr T\to\frakS$, there exists a $(\tau,\tau_{1})$-continuous
function $M:\mathscr T\to\frakS$ with
\[
 M(t)\in\Max\frakL\bigl(\{\Phi(t),\Psi(t)\}\bigr),\;t\in\mathscr T. 
\]
\end{corollary}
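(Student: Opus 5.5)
The plan is to take $M(t):=M_{I}(t)$, that is, to use the operator \eqref{eq5.1} with $T=I$ for the pair $\{\Phi(t),\Psi(t)\}$:
\[
M(t):=\frac{1}{2}\Bigl(\Phi(t)+\Psi(t)-\bigl|\Phi(t)-\Psi(t)\bigr|\Bigr),\quad t\in\mathscr T .
\]

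\emph{Maximality.} This needs nothing new. By Theorem~\ref{thm5.1} with $T=I$, applied pointwise to the two-element set $\{\Phi(t),\Psi(t)\}$, the operator $M(t)$ is a maximal lower bound of that set, so $M(t)\in\Max\frakL\bigl(\{\Phi(t),\Psi(t)\}\bigr)$ for every $t\in\mathscr T$.

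\emph{Continuity.} Write $M$ as a composition of continuous maps:
\[
t\mapsto\bigl(\Phi(t),\Psi(t)\bigr)\mapsto \Phi(t)-\Psi(t)\mapsto\bigl|\Phi(t)-\Psi(t)\bigr|.
\]
The first map $\mathscr T\to\frakS\times\frakS$ is continuous because each component is $(\tau,\tau_{1})$-continuous; continuity into a product topology is checked componentwise. Subtraction $\frakS\times\frakS\to\frakS$ is $\tau_{1}$-continuous because $\tau_{1}$ is a vector topology. So $t\mapsto\Phi(t)-\Psi(t)$ is $(\tau,\tau_{1})$-continuous. By hypothesis $S\mapsto|S|$ is $\tau_{1}$-continuous, hence $t\mapsto|\Phi(t)-\Psi(t)|$ is continuous. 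Finally, addition and multiplication by the real scalar $\tfrac{1}{2}$ are continuous in a vector topology, so $t\mapsto M(t)$ is $(\tau,\tau_{1})$-continuous.

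\emph{Main obstacle.} There is essentially none. The only point to watch is that we rely only on continuity of the vector operations, which the vector topology supplies, and on the assumed continuity of $S\mapsto|S|$; no joint continuity of operator products is used. That is why we use $T=I$ rather than a general $T$ in \eqref{eq5.1}.
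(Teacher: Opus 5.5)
Your proposal is correct and follows the paper's own (implicit) argument. The paper also takes $M(t)=\frac{1}{2}\bigl(\Phi(t)+\Psi(t)-|\Phi(t)-\Psi(t)|\bigr)$, i.e.\ $M_{I}$ from Theorem~\ref{thm5.1} applied pointwise, and gets maximality from that theorem and continuity from the vector-topology operations together with the assumed continuity of $S\mapsto|S|$.
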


\begin{problem*}
Extend Corollaries~\ref{cor5.2} and \ref{cor5.3} from a set of two
elements to an arbitrary finite set.
\end{problem*}

Taking into account \eqref{eq2.3}, we can suppose that a set $\frakM$
of two operators has the form $\frakM=\{A,0\}$ for some $A\in\frakS$.
Moreover, by Corollary~\ref{cor4.3a} it can be presumed without loss of
generality that $\mathscr R(A)$ is a dense subset of $\mathscr H$ or,
what is the same, that
\begin{equation}
  \label{eq5.3}
  \mathscr N(A)=\{0\}.
\end{equation}

The following auxiliary result was used by Stott in the matrix case to
prove Theorem~4.3 of \cite{Stott2016arxiv}.

\begin{lemma}
\label{lem5.4}
Let $\mathscr K$ be a subspace of $\mathscr H$.
\begin{enumerate}
\item[(i)]
  If there exists $M\in\Max\frakL$ satisfying $Mu=Au$, $u\in\mathscr
  K$, then $(Au,u)<0$ for all $u\in\mathscr K\setminus\{0\}$.
\item[(ii)]
  If there exists $M\in\Max\frakL$ such that $\mathscr
  K\subseteq\mathscr N(M)$, then $(Au,u)>0$ for all  $u\in\mathscr
  K\setminus\{0\}$.
\end{enumerate}
\end{lemma}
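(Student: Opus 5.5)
Both assertions follow from the lower-bound inequalities $M\leq A$ and $M\leq 0$ together with the standing normalization $\mathscr N(A)=\{0\}$ from \eqref{eq5.3}. Maximality of $M$ should not be needed; only $M\in\frakL$ is used. In each part I would first get the non-strict inequality directly from the relevant lower-bound inequality. For strictness I would use the fact recalled in the introduction: for $S\in\frakS^{+}$, a vector $u$ lies in $\mathscr N(S)$ if and only if $(Su,u)=0$.

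For (i), let $u\in\mathscr K$. Since $Mu=Au$, we have $(Au,u)=(Mu,u)$, and $M\leq 0$ gives $(Au,u)\leq 0$. Suppose $(Au,u)=0$ for some $u\in\mathscr K\setminus\{0\}$. Then $(-Mu,u)=0$ with $-M\in\frakS^{+}$, so $u\in\mathscr N(M)$. Hence $Au=Mu=0$, which contradicts $\mathscr N(A)=\{0\}$. Therefore $(Au,u)<0$ for all such $u$.

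For (ii), let $u\in\mathscr K\subseteq\mathscr N(M)$. Then $(Au,u)\geq(Mu,u)=0$, because $A-M\in\frakS^{+}$. If $(Au,u)=0$, then $((A-M)u,u)=0$, so $u\in\mathscr N(A-M)$. This gives $Au=Mu=0$, and $\mathscr N(A)=\{0\}$ forces $u=0$. Hence $(Au,u)>0$ on $\mathscr K\setminus\{0\}$.

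The only step needing care is the upgrade from $\leq$ or $\geq$ to a strict inequality. This is exactly where the null-space characterization of positive operators and the reduction \eqref{eq5.3}, justified earlier via Corollary~\ref{cor4.3a}, are used. I expect no further obstacle.
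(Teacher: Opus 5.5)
Your proof is correct and follows the paper's argument. In (i) you use $M\le 0$, the null-space characterization for $-M\in\frakS^{+}$, and \eqref{eq5.3} exactly as the paper does, and your explicit argument for (ii), via $M\le A$ and $A-M\in\frakS^{+}$, is the version with the roles of $A$ and $B=0$ interchanged that the paper only indicates; your remark that only $M\in\frakL$, not maximality, is needed is also accurate.
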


\begin{proof}
(i) Since $M\leq0$, we have $(Au,u)\leq0$, $u\in\mathscr K$. If
$u\in\mathscr K$ and $(Au,u)=0$, then it follows $(Mu,u)=0$, hence
$u\in\mathscr N(-M)=\mathscr N(M)$ since $-M\in\frakS^{+}$. Thus
$Au=Mu=0$ and $u=0$ by \eqref{eq5.3}. \\
(ii) is proved in a similar way by changing the roles of $A$ and $B$.  
\end{proof}

Stott~\cite{Stott2017dis}, \cf \cite{GaubertStott2017},
\cite{Stott2016arxiv}, has thoroughly studied the set $\frakL$ if
$\frakM$ is a set of two matrices. Part of his results depend on the
indefinite metric generated by the difference of these two
matrices. We give an alternative proof of one of Stott's main results,
which, as it seems to us, points out this dependence in a more lucid
way. We refer to \cite{AzizovIokhvidov1989} or \cite{Bognar1974} for
information on the geometry of spaces with an indefinite metric.

Let $\mathscr H=\mathbb C^{n}$ be an $n$-dimensional Hilbert space and
$\frakM:=\{A,0\}$, where $A$ is an invertible $n\times n$
matrix. Clearly, $\inf\frakM=0$ or $\inf\frakM=A$ if $A$ or $-A$,
resp., are positive. It remains to discuss the case that $A$ has $p$,
$p\in\{1,\ldots,n-1\}$, positive and $q:=n-p$ negative eigenvalues. By
\eqref{eq2.4} it is enough to study the case that $A$ has a $2\times 2$
block matrix representation
\[
A= \begin{pmatrix}
    I_{p} & 0 \\ 0 & -I_{q}
    \end{pmatrix} =: J
\]
according to the orthogonal decomposition $\mathscr H=\mathscr
H_{1}\oplus\mathscr H_{2}$, where $\mathscr H_{1}=\mathbb C^{p}$,
$\mathscr H_{2}=\mathbb C^{q}$ and $I_{p}$ and $I_{q}$ denote the
$p\times p$ and $q\times q$, resp., unit matrices.

Denote by $\mathbb C^{p\times q}$ the linear space of all $p\times q$
matrices with complex entries. For $X\in\mathbb C^{p\times q}$ define
an $n\times n$ matrix $S(X)$ by
\begin{equation}
\label{eq5.3a}
S(X):=
\begin{pmatrix}
  I_{p}+XX^{\ast} & (I_{p}+XX^{\ast})^{\sfrac{1}{2}}X \\
  X^{\ast}(I_{p}+XX^{\ast})^{\sfrac{1}{2}} & X^{\ast}X
\end{pmatrix} .
\end{equation}

\begin{theorem}
\label{thm5.5}
The map
\[
X\mapsto M(X) := J - S(X), \; X\in\mathbb C^{p\times q}
\]
establishes a one-to-one correspondence between
$\mathbb C^{p\times q}$ and $\Max\frakL$. 
\end{theorem}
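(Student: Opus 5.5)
The plan is to translate maximality into a kernel condition via Corollary~\ref{cor4.3}, and then to parametrize the operators satisfying it through a $J$-unitary factorization. We have $\frakM=\{J,0\}$. An operator $M$ is a lower bound iff $P:=J-M$ satisfies $P\geq0$ and $P-J=-M\geq0$. Since $\dim\mathscr H<\infty$, all ranges are closed, so by Corollary~\ref{cor4.3} $M\in\Max\frakL$ iff, in addition, $\mathscr N(P)+\mathscr N(P-J)=\mathbb C^{n}$. The claim is therefore that the $P$ with these three properties are exactly the matrices $S(X)$, $X\in\mathbb C^{p\times q}$, and that $X$ is uniquely determined by $P$.

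\emph{Step 1: each $S(X)$ is admissible.} Put $C:=(I_p+XX^{\ast})^{\sfrac12}$ and $C':=(I_q+X^{\ast}X)^{\sfrac12}$, and recall the intertwining relation $CX=XC'$. One checks directly that
\[
S(X)=WW^{\ast},\qquad W:=\begin{pmatrix}C\\ X^{\ast}\end{pmatrix},
\]
and that
\[
S(X)-J=VV^{\ast},\qquad V:=\begin{pmatrix}X\\ C'\end{pmatrix},
\]
using $C^{2}-I_p=XX^{\ast}$, $CX=XC'$ and $X^{\ast}X+I_q=C'^{2}$. Hence both $S(X)$ and $S(X)-J$ are positive. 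Moreover, $\mathscr N(S(X))=\mathscr R(W)^{\perp}$ and $\mathscr N(S(X)-J)=\mathscr R(V)^{\perp}$. These two kernels span $\mathbb C^{n}$ iff $\mathscr R(W)\cap\mathscr R(V)=\{0\}$ and the dimensions add up. Both hold once we know that $T:=[W\ V]=\begin{psmallmatrix}C&X\\X^{\ast}&C'\end{psmallmatrix}$ is invertible. Its Schur complement is
\[
C'-X^{\ast}C^{-1}X=C'-C'^{-1}X^{\ast}X=C'^{-1}(C'^{2}-X^{\ast}X)=C'^{-1},
\]
which is invertible, so $T$ is invertible. Thus $M(X)\in\Max\frakL$.

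\emph{Step 2: every maximal $M$ arises this way.} Let $P=J-M$ be admissible. Since $\mathscr N(P)\cap\mathscr N(P-J)\subseteq\mathscr N(J)=\{0\}$, the sum of the two kernels is direct. For $u\in\mathscr N(P)$ we have $(Ju,u)=-((P-J)u,u)\leq0$, and equality forces $u\in\mathscr N(P-J)$, hence $u=0$. So $\mathscr N(P)$ is $J$-negative definite and has dimension at most $q$; similarly $\mathscr N(P-J)$ is $J$-positive definite and has dimension at most $p$. Since the dimensions add up to $n$, we get $\operatorname{rank}P=p$ and $\operatorname{rank}(P-J)=q$.

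Factor $P=WW^{\ast}$ with $W$ of size $n\times p$ and $P-J=VV^{\ast}$ with $V$ of size $n\times q$. Then $T:=[W\ V]$ is invertible, because the ranges of $W$ and $V$ are the orthogonal complements of the two complementary kernels. We also have $TJ'T^{\ast}=J$, where $J'=\operatorname{diag}(I_p,-I_q)$ (here $J'=J$). Since $T$ is invertible and $J^{2}=I$, this gives $T^{\ast}JT=J$, and in particular $W^{\ast}JW=I_p$.

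Write $W=\begin{psmallmatrix}W_1\\W_2\end{psmallmatrix}$. Then $W_1^{\ast}W_1=I_p+W_2^{\ast}W_2$, so $W_1$ is invertible. Replacing $W$ by $WU$ for the unitary $U$ from the polar decomposition of $W_1$ does not change $P$, and makes the top block positive: $W_1=C>0$. Setting $X:=W_2^{\ast}$, the relation $W^{\ast}JW=I_p$ now reads $C^{2}=I_p+XX^{\ast}$. Therefore $C=(I_p+XX^{\ast})^{\sfrac12}$ and $P=WW^{\ast}=S(X)$.

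\emph{Step 3: injectivity.} From $S(X)$ we recover $C$ as the square root of its upper left block. We then recover $X^{\ast}=X^{\ast}C\,C^{-1}$ from its lower left block. Hence $X\mapsto M(X)$ is injective.

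The main obstacle is Step 2: extracting the canonical form from an arbitrary maximal $M$. The key points are the dimension count, which uses the $J$-definiteness of the two kernels, and the passage from $TJT^{\ast}=J$ to $W^{\ast}JW=I_p$. This second point is what pins down the normalization $C^{2}=I_p+XX^{\ast}$ after the unitary adjustment of $W$.
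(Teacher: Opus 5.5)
Your proof is correct, and it follows a genuinely different route from the paper's. The paper obtains $S(X)\geq 0$ and $M(X)\leq 0$ from Albert's criterion (Theorem~\ref{thm2.2}). It then computes the two kernels explicitly to get $\dim\mathscr N(S(X))\geq q$ and $\dim\mathscr N(M(X))\geq p$. For surjectivity it uses Lemma~\ref{lem5.4} to see that $\mathscr N(M)$ is a maximal $J$-positive definite subspace and writes $\mathscr N(M)$ as the graph of its angular operator $K$. It then sets $X:=-(I_p-K^{\ast}K)^{-\sfrac{1}{2}}K^{\ast}$ and identifies $\mathscr N(J-M)=\{\begin{psmallmatrix}K^{\ast}x_2\\ x_2\end{psmallmatrix}: x_2\in\mathscr H_2\}$ as the $J$-orthogonal complement of $\mathscr N(M)$. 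Finally it checks that $M$ and $M(X)$ agree on both kernels.

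You instead work with the Gram factorizations $J-M=WW^{\ast}$ and $-M=VV^{\ast}$. The $J$-definiteness of the kernels enters only through the rank count. The decisive step is that $T=[W\ V]$ is $J$-unitary, which gives $W^{\ast}JW=I_p$. Once the unitary freedom in $W$ is removed by making the top block positive, this yields the normal form $W=\begin{psmallmatrix}C\\ X^{\ast}\end{psmallmatrix}$.

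Your argument is self-contained and purely matrix-algebraic: it needs no angular operators, no facts on $J$-orthogonal complements of maximal definite subspaces, and no Albert criterion. It also makes visible that $\Max\frakL=\{T\,\mathrm{diag}(0,-I_q)\,T^{\ast} : TJT^{\ast}=J\}$, with $X$ serving as a coordinate on $J$-unitaries modulo block-diagonal unitaries. The paper's version, in turn, describes the kernels of a maximal lower bound explicitly as graphs of $K$ and $K^{\ast}$, which is the Krein-space geometry the authors want to highlight.

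Two small points. After replacing $W$ by $WU$ you should say that $W^{\ast}JW=I_p$ persists; it does, since $U^{\ast}U=I_p$. And in Step~1 the Schur complement computation can be dropped, because $TJT^{\ast}=WW^{\ast}-VV^{\ast}=S(X)-(S(X)-J)=J$ already forces $T$ to be invertible.
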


\begin{proof}
Step 1: For $X\in\mathbb C^{p\times q}$, the matrix $M(X)$ is a
maximal lower bound of $\frakM$. Since $S(X)\in\frakS^{+}$ by
Theorem~\ref{thm2.2}, $M(X)\leq J$. Since
\[
M(X)=
- \begin{pmatrix}
  XX^{\ast} & (I_{p}+XX^{\ast})^{\sfrac{1}{2}}X \\
  X^{\ast}(I_{p}+XX^{\ast})^{\sfrac{1}{2}} & I_{q}+X^{\ast}X
  \end{pmatrix}
\]
and
\[
(I_{p}+XX^{\ast})^{\sfrac{1}{2}}X = X(I_{q}+X^{\ast}X)^{\sfrac{1}{2}}
\]
another application of Theorem~\ref{thm2.2} yields $M(X)\leq0$. To
prove that $M(X)$ is a maximal element of $\frakL$ note first that
\[
  \mathscr N\bigl(S(X)\bigr)=\left\{
    \begin{pmatrix}
    -(I_{p}+XX^{\ast})^{-\sfrac{1}{2}} X x_{2} \\ x_{2}
    \end{pmatrix}
    : x_{2} \in\mathscr H_{2}\right\},
\]  
which yields
\[
\dim\mathscr N\bigl(J-M(X)\bigr) = \dim\mathscr N\bigl(S(X)\bigr)\geq q.
\]
Similarly,
\[
  \mathscr N\bigl(M(X)\bigr)=\left\{
    \begin{pmatrix}
    x_{1} \\
    -(I_{q}+X^{\ast}X)^{-\sfrac{1}{2}} X^{\ast}x_{1} 
    \end{pmatrix}
    : x_{1} \in\mathscr H_{1}\right\},
\]
hence, $\dim\mathscr N\bigl(M(X)\bigr)\geq p$.
Since $M(X)+S(X)=J$ yields
\[
\mathscr N\bigl(M(X)\bigr) \cap \mathscr N\bigl(S(X)\bigr) = \{0\},
\]
we obtain 
\[
\mathscr N\bigl(0-M(X)\bigr) + \mathscr N\bigl(J-M(X)\bigr) = \mathbb C^{n},
\]
and $M(X)\in\Max\frakL$ by \eqref{eq4.3}. \\
Step~2: The correspondence $X\mapsto M(X)$ is one-to-one. Let
$X,Y\in\mathbb C^{p\times q}$ and $M(X)=M(Y)$. Comparing the left
upper  corners of $M(X)$ and $M(Y)$ we get
$XX^{\ast}=YY^{\ast}$. Since $I_{p}+XX^{\ast}$ is invertible, it
follows $X=Y$ from the equality of the right upper corners of $M(X)$
and $M(Y)$. \\
Step~3: For arbitrary $M\in\Max\frakL$, there exists $X\in\mathbb
C^{p\times q}$ satisfying $M=M(X)$. By Lemma~\ref{lem5.4} the space
$\mathscr N(J-M)$ is a negative definite subspace and the space
$\mathscr N(0-M)$ is a positive definite subspace with respect to the
indefinite metric generated by $J$. Since 
\begin{equation}
\label{eq5.4}  
\mathscr N\bigl(0-M\bigr) + \mathscr N\bigl(J-M\bigr) = \mathscr H
\end{equation}
by \eqref{eq4.3}, the space $\mathscr N(0-M)$ is a maximal positive
definite subspace. Let $K$ be its angular operator. Recall that
\[
  \mathscr N(0-M) = \left\{
     \begin{pmatrix} x_{1} \\ Kx_{1} \end{pmatrix}
      : x_{1}\in\mathscr H_{1} \right\}
\]  
and $K$ is a contraction from $\mathscr H_{1}$ into $\mathscr H_{2}$
with the additional property that $I_{p}-K^{\ast}K$ is invertible.
Setting
\[
X:=-(I_{p}-K^{\ast}K)^{-\sfrac{1}{2}}K^{\ast},
\]
we obtain
\begin{align*}
(I_{p}+XX^{\ast}) & =
  I_{p}+(I_{p}-K^{\ast}K)^{-\sfrac{1}{2}}K^{\ast}K
  (I_{p}-K^{\ast}K)^{-\sfrac{1}{2}} 
   = I_{p} + (I_{p}-K^{\ast}K)^{-1} K^{\ast}K \\                    
 & = (I_{p}-K^{\ast}K)^{-1}(I_{p}-K^{\ast}K + K^{\ast}K)
   = (I_{p}-K^{\ast}K)^{-1}
\end{align*}  
and 
\[
X^{\ast}X = K(I_{p}-K^{\ast}K)^{-1}K^{\ast},
\]
hence   
\[
  S(X)=
  \begin{pmatrix}
    (I_{p}-K^{\ast}K)^{-1}    & -(I_{p}-K^{\ast}K)^{-1}K^{\ast} \\
    -K (I_{p}-K^{\ast}K)^{-1} & K (I_{p}-K^{\ast}K)^{-1} K^{\ast}
  \end{pmatrix}.
\]
Now it is not hard to see that for 
\[
u = \begin{pmatrix} x_{1} \\ K x_{1} \end{pmatrix}
    \in\mathscr N(0-M),\quad
M(X)u = \bigl(J-S(X)\bigr) \begin{pmatrix} x_{1} \\ K x_{1} \end{pmatrix}=0,
\]  
thus
\begin{equation}
\label{eq5.5} 
  Mu = M(X)u, u\in\mathscr N(0-M).
\end{equation}
If $u\in\mathscr N(J-M)$ and $v\in\mathscr N(M)$, we have
$(Ju,v)=(Mu,v)=(u,Mv)=0$. It follows that $\mathscr N(J-M)$ is a
subspace of the $J$-orthogonal complement of $\mathscr N(M)$. Since
the $J$-orthogonal complement of a maximal positive definite subspace
$\mathscr N(M)$ is a negative definite subspace and since $\mathscr
N(J-M)$ is a maximal negative definite subspace, we can conclude
that $\mathscr N(J-M)$ is the $J$-orthogonal complement of
\[
 \mathscr N(M) =
 \left\{ \begin{pmatrix} x_{1}\\ K x_{1} \end{pmatrix} :
  x_{1}\in\mathscr H_{1} \right\},
\]  
which yields
\[
  \mathscr N(J-M)=
  \left\{ \begin{pmatrix} K^{\ast}x_{2}\\x_{2} \end{pmatrix} :
  x_{2}\in\mathscr H_{2} \right\}.
\]  
It is easy to see that
\[
  M(X)\begin{pmatrix} K^{\ast}x_{2}\\x_{2} \end{pmatrix}
  = \bigl(J-S(X)\bigr)
    \begin{pmatrix} K^{\ast}x_{2}\\x_{2} \end{pmatrix}
  = \begin{pmatrix} K^{\ast}x_{2}\\-x_{2} \end{pmatrix}
\] 
or, equivalently, $M(X)v=Jv$, $v\in\mathscr N(J-M)$. It follows
\begin{equation}
\label{eq5.6}
Mv = M(X)v,\;v\in\mathscr(J-M).  
\end{equation}
Finally, equality $M=M(X)$ is a consequence of \eqref{eq5.4},
\eqref{eq5.5}, and \eqref{eq5.6}.
\end{proof}

\section{Infimum and maximal lower bounds under restrictions}
\label{sec6}
The present section is mainly devoted to a generalization of
Theorem~\ref{thm1.2} and to an extension of Theorem~\ref{thm1.3} if
$\dim\mathscr H<\infty$. For a set $\frakM$, define
\[
  \frakL^{com}=\frakL^{com}(\frakM)
  :=\bigl\{L\in\frakL: AL=LA, A\in\frakM\bigr\}.
\]
If the greatest element $G(\frakL^{com})$ of the set $\frakL^{com}$
exists and if $M$ is a maximal lower bound of $\frakM$ commuting with
all operators of $\frakM$, obviously, $G(\frakL^{com})=M$.
Thus, we can state the following result.

\begin{theorem}
\label{thm6.1}
Let $\frakM$ be such that $G(\frakL^{com})$ exists. There exists at
most one maximal lower bound of $\frakM$ commuting with all operators
of $\frakM$. \hfill\MYQED  
\end{theorem}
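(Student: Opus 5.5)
The plan is to show that any maximal lower bound $M$ of $\frakM$ commuting with all operators of $\frakM$ must coincide with $G(\frakL^{com})$. This forces all such $M$ to be equal, so there is at most one.

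First, let $M\in\Max\frakL$ satisfy $AM=MA$ for all $A\in\frakM$. Then $M$ is a lower bound of $\frakM$ commuting with every element of $\frakM$, so $M\in\frakL^{com}$.

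Second, set $G:=G(\frakL^{com})$, which exists by hypothesis. Since $G$ is the greatest element of $\frakL^{com}$ and $M\in\frakL^{com}$, we have $M\leq G$. Moreover, $G\in\frakL^{com}\subseteq\frakL$, so $G$ is a lower bound of $\frakM$ dominating $M$. Maximality of $M$ in $\frakL$ then gives $M=G$.

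Third, if $M_{1},M_{2}$ are two maximal lower bounds of $\frakM$ commuting with all operators of $\frakM$, the preceding step gives $M_{1}=G=M_{2}$, which proves the claim.

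There is no real obstacle here. The only points to check are that $G(\frakL^{com})$ belongs to $\frakL$, which holds by the definition of a greatest element of the subset $\frakL^{com}\subseteq\frakL$, and that the definition of maximality is applied within $\frakL$ rather than within $\frakL^{com}$.
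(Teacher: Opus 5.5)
Your proposal is correct and matches the paper's argument: a maximal lower bound $M$ commuting with $\frakM$ lies in $\frakL^{com}$, so $M\leq G(\frakL^{com})\in\frakL$, and maximality in $\frakL$ forces $M=G(\frakL^{com})$. The paper treats this as obvious and states it in one line just before the theorem; your write-up spells out the same reasoning.
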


\begin{example}
\label{ex6.2}
Let
\[
  \frakM:=\left\{
    \begin{pmatrix}1&0\\0&0\end{pmatrix},
    \begin{pmatrix}1&1\\1&2\end{pmatrix}
   \right\}.
\]
It is easy to see that only scalar matrices commute with both
matrices  of  $\frakM$. It follows $G(\frakL^{com})=0$. Since the matrix
$\begin{psmallmatrix}\frac{1}{2}&0\\0&0 \end{psmallmatrix}$ is a
lower bound of $\frakM$, a maximal lower bound of $\frakM$ commuting
with both matrices of $\frakM$ does not exist.
\end{example}

Deriving a generalization of Theorem~\ref{thm1.2} we first prove that
the greatest element $G(\frakL^{com})$ exists if $\frakM$ is a finite
set. At the same time, our proof delivers an algorithm for computing
$G(\frakL^{com})$ inductively.

Let $\frakM:=\{A_{n}:n\in\mathbb N\}$ be a set of pairwise commuting
operators. We set
\begin{equation}
\label{eq6.1}
M_{1}:= A_{1},\; M_{n+1}:=\bigl(M_{n}+A_{n+1}-|M_{n}-A_{n+1}|\bigr),
\;n\in\mathbb N.
\end{equation}
Let $\frakM_{n}$ denote the subset $\frakM_{n}:=\{A_{k}:
k=1,\ldots,n\}$ of $\frakM$ and
$\widetilde\frakM_{n+1}:=\{M_{n},A_{n+1}\}$,  $n\in\mathbb N$.

\begin{theorem}
\label{thm6.3}
For $n\in\mathbb N$, the operator $M_{n}$ is the greatest element of
$\frakL^{com}(\frakM_{n})$. 
\end{theorem}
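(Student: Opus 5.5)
Note that \eqref{eq6.1} as printed omits the factor $\frac12$; I read it as $M_{n+1}=\frac12\bigl(M_n+A_{n+1}-|M_n-A_{n+1}|\bigr)$, in accordance with \eqref{eq1.1}.

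The proof is by induction on $n$. For $n=1$ the claim is trivial: $A_1$ is itself a lower bound of $\frakM_1=\{A_1\}$ commuting with $A_1$, and every lower bound is $\leq A_1$.

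For the induction step, assume $M_n$ is the greatest element of $\frakL^{com}(\frakM_n)$. We first record that $M_n$ commutes with every $A_k$, $k\in\mathbb N$. Indeed, $M_n$ is obtained from $A_1,\dots,A_n$ by sums and by taking $|\cdot|$ of self-adjoint operators. Whenever $S,T\in\frakS$ commute with a family of pairwise commuting operators, so do $S+T$ and $|S-T|=((S-T)^2)^{1/2}$, since the square root is a norm limit of polynomials. By induction, then, $M_n$ lies in the commutative von Neumann algebra (or simply the commutant-stable set) generated by $\frakM$. In particular $M_n$ and $A_{n+1}$ commute, so Theorem~\ref{thm1.2}, applied to $\widetilde\frakM_{n+1}=\{M_n,A_{n+1}\}$, shows that $M_{n+1}$ is the greatest element of $\frakL^{com}(\widetilde\frakM_{n+1})$.

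We next show that $M_{n+1}\in\frakL^{com}(\frakM_{n+1})$. It commutes with all $A_k$ by the observation above. It is also a lower bound of $\frakM_{n+1}$: we have $M_{n+1}\leq A_{n+1}$ and $M_{n+1}\leq M_n\leq A_k$ for $k\leq n$.

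Finally, let $L\in\frakL^{com}(\frakM_{n+1})$. Then $L\in\frakL^{com}(\frakM_n)$, so $L\leq M_n$ by the induction hypothesis. Hence $L$ is a lower bound of $\{M_n,A_{n+1}\}$. The main point is that $L$ must commute with $M_n$, not only with the $A_k$. This holds because $L$ commutes with $A_1,\dots,A_n$, hence with every operator built from them by sums and continuous functional calculus, and in particular with $M_n$. The commutation of $L$ with $|M_{n-1}-A_n|$ follows from the fact that an operator commuting with a self-adjoint $S$ commutes with $S^2$ and therefore with $(S^2)^{1/2}$, again as a norm limit of polynomials in $S^2$. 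So $L\in\frakL^{com}(\widetilde\frakM_{n+1})$, and Theorem~\ref{thm1.2} gives $L\leq M_{n+1}$. This completes the induction.

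The only real obstacle is this commutation bookkeeping: that $\frakL^{com}$ with respect to $\{A_1,\dots,A_n\}$ is contained in $\frakL^{com}$ with respect to $\{M_n,A_{n+1}\}$. This is handled by the functional-calculus argument above.
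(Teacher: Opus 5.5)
Your proof is correct and follows the paper's argument essentially step for step. It uses induction on $n$, applies Theorem~\ref{thm1.2} to the commuting pair $\widetilde{\frakM}_{n+1}=\{M_n,A_{n+1}\}$, and notes that $M_n$, being built from $A_1,\dots,A_n$ by sums and $|\cdot|$, commutes with every operator commuting with all of $\frakM$, so that $\frakL^{com}(\frakM_{n+1})\subseteq\frakL^{com}(\widetilde{\frakM}_{n+1})$. Your reading of \eqref{eq6.1} with the factor $\frac12$ restored is also the right one, since without it the statement already fails for $A_1=A_2=I$.
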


\begin{proof}
From \eqref{eq6.1} it can be concluded that $M_{n}$ is a function of
$A_{k}$,
$k=1,\ldots,n$. Since the operators of $\frakM$ commute pairwise, it
follows that $M_{n}$ commutes with all operators of $\frakM$ and
$M_{n+1}$ commutes with $M_{n}$. Moreover, if $S\in\frakS$ commutes
with all operators of $\frakM$, it commutes with $M_{n}$. Let us prove
the first assertion by induction over $n$. The case $n=1$ is trivial,
and the case $n=2$ is covered by Theorem~\ref{thm1.2}. It follows that
\begin{equation}
  \label{eq6.2}
  M_{n+1}=G\bigl(\frakL^{com}(\widetilde\frakM_{n+1})\bigr)
\end{equation}
particularly,
\begin{equation}
  \label{eq6.3}
  M_{n+1}\leq M_{n}.
\end{equation}
Since $M_{n}=G\bigl(\frakL^{com}(\frakM_{n})\bigr)$ by induction assumption,
we have
\[
  M_{n+1}\in \frakL^{com}(\frakM_{n+1}).
\]
If $L\in\frakL^{com}(\frakM_{n+1})$, then
$L\in\frakL^{com}(\frakM_{n})$ and $L$ commutes with $A_{n+1}$. It
follows $L\leq M_{n}$, hence
$L\in\frakL^{com}(\widetilde{\frakM}_{n+1})$ and $L\leq M_{n+1}$ by
\eqref{eq6.2}, which yields
\[
  M_{n+1}=G\bigl(\frakL^{com}(\frakM_{n+1})\bigr). \tag*{\MYQED}
\]
\renewcommand{\qed}{}
\end{proof}

\begin{theorem}
\label{thm6.3a}
If $\frakM$ is a set of pairwise commuting operators, the greatest
element of $\frakL^{com}$ exists. 
\end{theorem}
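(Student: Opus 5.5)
We extend Theorem~\ref{thm6.3} from finite subsets to an arbitrary set $\frakM$ of pairwise commuting operators by a monotone limit argument. First we treat a countable set $\frakM=\{A_{n}:n\in\mathbb N\}$. Then we reduce the general case to the countable one, or run the same argument with a net indexed by the finite subsets of $\frakM$.

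For the limit we use Theorem~\ref{thm6.3} together with \eqref{eq6.3}. Each $M_{n}=G(\frakL^{com}(\frakM_{n}))$ commutes with every operator of $\frakM$, and $(M_{n})$ is a decreasing sequence. It is bounded from below: $\frakL$ is nonempty, but a lower bound need not commute with $\frakM$, so we need a commuting one. Fix $A_{1}\in\frakM$ and any $L\in\frakL$. Then $A_{1}-A\le A_{1}-L$ for all $A\in\frakM$. Hence $c:=\inf\{\,\cdot\,\}$ in the form $-\|A_{1}-L\|$ gives $A\ge A_{1}-\|A_{1}-L\|I$. Indeed $A-A_{1}\ge L-A_{1}\ge -\|L-A_{1}\|I$. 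So $A_{1}-\|A_{1}-L\|I$ is a lower bound commuting with all of $\frakM$, i.e.\ it lies in $\frakL^{com}$.

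Every $L\in\frakL^{com}$ satisfies $L\le M_{n}$ for all $n$, so $M_{n}\ge A_{1}-\|A_{1}-L\|I$. Applying Theorem~\ref{thm2.1a} to $\{-M_{n}\}$, which is directed and bounded above, we obtain $G:=\lim M_{n}$ in the strong-operator topology with $(Gu,u)=\inf_{n}(M_{n}u,u)$. We then verify three properties. First, $G\le A_{k}$ for every $k$, since $G\le M_{k}\le A_{k}$. Second, $G$ commutes with each $A\in\frakM$, because strong limits preserve commutation: $GAu=\lim M_{n}Au=\lim AM_{n}u=AGu$. Third, every $L\in\frakL^{com}$ satisfies $L\le M_{n}$ for all $n$, hence $L\le G$. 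Thus $G=G(\frakL^{com})$.

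For an uncountable $\frakM$ we index by the directed set $\Lambda$ of finite subsets $\frakF\subseteq\frakM$. Theorem~\ref{thm6.3}, which applies after enumerating $\frakF$, gives $G_{\frakF}:=G(\frakL^{com}(\frakF))$. For $\frakF\subseteq\frakF'$ we need $G_{\frakF'}\le G_{\frakF}$. By Theorem~\ref{thm6.3}, $G_{\frakF}$ is a function of the elements of $\frakF$ and hence commutes with all of $\frakM$. Since $G_{\frakF'}$ commutes with $\frakF$ and lies below $\frakF$, we get $G_{\frakF'}\in\frakL^{com}(\frakF)$ and therefore $G_{\frakF'}\le G_{\frakF}$. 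So $\{-G_{\frakF}\}$ is a directed family, bounded above by the same argument as before. Theorem~\ref{thm2.1a} then yields its greatest element, and \eqref{eq2.1a} shows that the decreasing net $G_{\frakF}$ converges strongly to the limit $G$. Alternatively, monotone nets of bounded self-adjoint operators converge strongly, which gives the same conclusion. The three verifications then go through exactly as in the countable case.

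We expect the main obstacle to be the commutation of $G_{\frakF}$, and of the limit, with the \emph{whole} of $\frakM$ rather than just with $\frakF$. The remedy is the remark in the proof of Theorem~\ref{thm6.3} that $G_{\frakF}$ is built by continuous functional calculus, namely absolute values of polynomial expressions in the elements of $\frakF$. Therefore it commutes with every operator commuting with $\frakF$, and in particular with all of $\frakM$. A secondary point is the existence of a commuting lower bound, which we handled above with the shift $A_{1}-cI$.
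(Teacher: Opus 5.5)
Your proof is correct and follows essentially the same route as the paper. Both take the greatest elements of $\frakL^{com}$ of finite subsets via Theorem~\ref{thm6.3}, pass to the monotone limit with Theorem~\ref{thm2.1a}, and check that the limit is a lower bound commuting with $\frakM$ that dominates every element of $\frakL^{com}$. The differences are minor: the paper first shifts $\frakM$ into $\frakS^{+}$ so that $0$ bounds the net from below, where you use the commuting lower bound $A_{1}-\|A_{1}-L\|I$; the paper gets commutation from the weak-operator closure rather than strong convergence; and your separate countable case is already covered by the net argument.
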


\begin{proof}
We can presuppose that $\frakM$ is a subset of $\frakS^{+}$. Let
$\Lambda$ denote the set of finite subsets of $\frakM$ partially
ordered by the inclusion relation. For $\lambda\in\Lambda$, let
$M_{\lambda}$ be the greatest element of $\frakL^{com}(\lambda)$,
existing according to Theorem~\ref{thm6.3}. The set $\frakN:=\{-M_{\lambda}:\lambda\in\Lambda\}$ satisfies the
conditions of Theorem~\ref{thm2.1}. Denote the greatest element of
$\frakN$ by $-M$. From \eqref{eq2.1a} and the definition of the
weak-operator topology easily follows that $-M$ is an element of the
weak-operator closure of $\frakN$. Therefore, $-M$ commutes with all
operators of $\frakM$, and we obtain $M\in\frakL^{com}(\frakM)$.
On the other hand, if $L\in\frakL^{com}(\frakM)$, then 
$L\in\frakL^{com}(\lambda)$, hence $L\leq M_{\lambda}$ for all
$\lambda\in\Lambda$, which implies $L\leq M$. It follows that $M$ is
the greatest element of $\frakL^{com}$. 
\end{proof}

From Theorems~\ref{thm6.1} and \ref{thm6.3a} the question arises,
whether for an arbitrary set $\frakM$ of pairwise commuting
operators there exists a maximal lower bound of $\frakM$ commuting
with all operators of $\frakM$. We only have two partial answers.
Our first result generalizes Theorem~6 of \cite{Aslanov2005} from a
set $\frakM$ of two commuting operators to an arbitrary finite set of
pairwise commuting operators. Our second result gives an affirmative
answer to our question if $\dim\mathscr H < \infty$.

\begin{theorem}
\label{thm6.4}
Let $\frakM:=\{A_{1},\ldots,A_{n}\}$ be a finite set of pairwise
commuting operators. There exists a maximal lower bound $M$ of
$\frakM$ commuting with all operators of $\frakM$.  Moreover, for
$j,k\in\{1,\ldots,n\}$, the orthogonal complements of $\mathscr
N(A_{j}-M)\cap\mathscr N(A_{k}-M)$ in the subspaces $\mathscr
N(A_{j}-M)$
and $\mathscr N(A_{k}-M)$, resp., are orthogonal to one another.
\end{theorem}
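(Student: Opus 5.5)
Since $A_{1},\ldots,A_{n}$ commute pairwise, I would work in the commutative von Neumann algebra they generate. By the spectral theorem (multiplicity form) there are a measure space $(\Omega,\mu)$, a unitary $U:\mathscr H\to L^{2}(\Omega,\mu)$ and real bounded measurable functions $a_{1},\ldots,a_{n}$ with $UA_{j}U^{\ast}$ equal to multiplication by $a_{j}$. Define
\[
m:=\min\{a_{1},\ldots,a_{n}\}
\]
pointwise, and let $M:=U^{\ast}(\text{multiplication by } m)U$. Equivalently, $M$ is the operator $M_{n}$ of \eqref{eq6.1}, suitably normalised, i.e.\ with the factor $\frac12$ as in \eqref{eq1.1}. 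Clearly $M$ is a bounded self-adjoint function of the $A_{j}$, so it commutes with all operators of $\frakM$, and $M\leq A_{j}$ because $m\leq a_{j}$ pointwise.

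To prove maximality I would invoke Corollary~\ref{cor4.3}. Partition $\Omega$ measurably into $\Omega_{1},\ldots,\Omega_{n}$, where $\Omega_{j}$ is the set of points at which $j$ is the smallest index with $a_{j}(\omega)=m(\omega)$. Every $f\in L^{2}$ supported in $\Omega_{j}$ satisfies $(a_{j}-m)f=0$, so $U^{\ast}f\in\mathscr N(A_{j}-M)$. Since $L^{2}(\Omega)=\bigoplus_{j}L^{2}(\Omega_{j})$, we get $\sum_{j}\mathscr N(A_{j}-M)=\mathscr H$. Hence \eqref{eq4.3} holds and $M\in\Max\frakL$.

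For the orthogonality assertion, I would identify the kernels explicitly. The kernel $\mathscr N(A_{j}-M)$ is exactly $U^{\ast}L^{2}(Z_{j})$ with $Z_{j}:=\{\omega:a_{j}(\omega)=m(\omega)\}$, because multiplication by the nonnegative function $a_{j}-m$ kills $f$ if and only if $f=0$ a.e.\ off $Z_{j}$. The intersection $\mathscr N(A_{j}-M)\cap\mathscr N(A_{k}-M)$ is then $U^{\ast}L^{2}(Z_{j}\cap Z_{k})$. The orthogonal complement of this intersection inside $\mathscr N(A_{j}-M)$ is $U^{\ast}L^{2}(Z_{j}\setminus Z_{k})$, and inside $\mathscr N(A_{k}-M)$ it is $U^{\ast}L^{2}(Z_{k}\setminus Z_{j})$. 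These two sets are disjoint, so the subspaces are orthogonal.

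To avoid measure theory I could phrase everything via the spectral projections $E_{j}:=$ projection onto $\mathscr N(A_{j}-M)$. Since $A_{j}-M\geq0$ commutes with everything, these projections commute pairwise, and commuting projections $E_{j},E_{k}$ satisfy $(E_{j}-E_{j}E_{k})(E_{k}-E_{j}E_{k})=0$, which is precisely the claimed orthogonality.

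The main obstacle is the equality $\sum_{j}\mathscr N(A_{j}-M)=\mathscr H$ in the projection-only language, i.e.\ showing that $\bigvee_{j}E_{j}=I$. I would handle it by induction using the recursion \eqref{eq6.1} and the two-operator fact from the proof of Theorem~\ref{thm5.1}. That fact gives $\mathscr N(M_{n}-M_{n+1})+\mathscr N(A_{n+1}-M_{n+1})=\mathscr H$ via spectral projections of $M_{n}-A_{n+1}$ onto $[a,0]$ and $[0,b]$. Combining this with the inductive hypothesis on $M_{n}$, and the fact that these spectral projections commute with the $E_{j}$ for $M_{n}$, I would show that $\mathscr N(A_{j}-M_{n})\cap\mathscr N(M_{n}-M_{n+1})\subseteq\mathscr N(A_{j}-M_{n+1})$. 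Commutativity then makes the joins decompose correctly.
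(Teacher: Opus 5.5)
Your proposal is correct and is essentially the paper's proof. The paper also places the $A_k$ in a joint functional calculus, via von Neumann's theorem as functions $f_k$ of a single operator $S$ with spectral measure $E$. It takes $M$ to be the pointwise minimum, identifies $\mathscr N(A_k-M)$ with the spectral subspace $E(\Delta_k)\mathscr H$ of the set where the minimum is attained, and derives both maximality (via \eqref{eq4.3}) and the orthogonality claim from this description. Your multiplication-operator form of the spectral theorem is a cosmetic variant, and the projection-only induction in your last paragraph is unnecessary, though it also works because all the projections involved commute.
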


\begin{proof}
By a theorem of von Neuman, \cf \cite[Nr.~130]{RieszNagy1973}, there
exists an operator $S\in\frakS$ such that all operators of $\frakM$
are functions of $S$. If
\[
  S = \int_{a}^{b} t\,E(dt)
\]
is a spectral representation of $S$, we can write
\[
   A_{k} = \int_{a}^{b} f_{k}(t)\,E(dt), 
\]
where $f_{k}$ is a measurable real function on $[a,b]$,
$k=1,\ldots,n$. Set
\[
  f(t):=\min\bigl\{f_{k}(t): k=1,\ldots,n\bigr\},\;t\in[a,b],
\]
and
\[
  M:=\int_{a}^{b} f(t)\,E(dt).
\]
The operator $M$ is a lower bound of $\frakM$ and commutes with all
operators of $\frakM$. If
\[
  \Delta_{k}:=\bigl\{t\in[a,b] : f(t)=f_{k}(t)\bigr\}
\]
we obtain
\begin{equation}
\label{eq6.4}
\mathscr N(A_{k}-M) = E(\Delta_{k})\mathscr H, \;k=1,\ldots,n  
\end{equation}
which implies
\[
  \sum_{k=1}^{n}\mathscr N(A_{k}-M)=\mathscr H
\]
and $M\in\Max\frakL$ by \eqref{eq4.3}. The second assertion of the
theorem immediately follows from \eqref{eq6.4}.
\end{proof}

\begin{theorem}
\label{thm6.7}
If $\frakM$ is a set of pairwise commuting matrices, there exists a
unique maximal lower bound of $\frakM$ commuting with all matrices of $\frakM$.  
\end{theorem}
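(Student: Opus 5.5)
The plan is to reduce an arbitrary (possibly infinite) set $\frakM$ of pairwise commuting $n\times n$ matrices to a finite subset with the same relevant structure. Then Theorem~\ref{thm6.4} gives existence and Theorem~\ref{thm6.1}, together with Theorem~\ref{thm6.3a}, gives uniqueness.

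\emph{Uniqueness.} By Theorem~\ref{thm6.3a} the greatest element $G(\frakL^{com})$ exists. Theorem~\ref{thm6.1} then shows there is at most one maximal lower bound commuting with all matrices of $\frakM$.

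\emph{Existence, finite-dimensional reduction.} Since all matrices of $\frakM$ commute pairwise, they are simultaneously unitarily diagonalizable. Fix an orthonormal basis $e_{1},\ldots,e_{n}$ of common eigenvectors and write $Ae_{i}=\lambda_{i}(A)e_{i}$. Since $\frakM$ is bounded from below, every function $\lambda_{i}$ is bounded below on $\frakM$, so $\mu_{i}:=\inf\{\lambda_{i}(A):A\in\frakM\}$ is finite. Set
\[
M:=\sum_{i=1}^{n}\mu_{i}\,e_{i}e_{i}^{\ast}.
\]
This $M$ is diagonal in the common basis, hence commutes with every $A\in\frakM$, and $A-M$ is diagonal with nonnegative entries, so $M\in\frakL$.

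\emph{Maximality, the main obstacle.} The infima $\mu_{i}$ need not be attained; this is exactly the phenomenon of Example~\ref{ex4.3}. So I cannot directly pick $A^{(i)}\in\frakM$ with $\mathscr N(A^{(i)}-M)\ni e_{i}$ and apply \eqref{eq4.3}. Instead I argue via Theorem~\ref{thm4.2} and Lemma~\ref{lem4.1}: $M$ is maximal iff $0$ is the only positive lower bound of $\frakM-M$. Let $L\geq0$ with $L\leq A-M$ for all $A$.
\begin{itemize}
\item For each $i$ choose $A_{k}\in\frakM$ with $\lambda_{i}(A_{k})-\mu_{i}\to0$.
\item Then $0\leq(Le_{i},e_{i})\leq((A_{k}-M)e_{i},e_{i})\to0$, so $(Le_{i},e_{i})=0$ for every $i$.
\item Since $L\geq0$, this gives $Le_{i}=0$ for all $i$, hence $L=0$.
\end{itemize}
Thus $M\in\Max\frakL$, which proves existence.

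This argument actually sidesteps Theorem~\ref{thm6.4}. The key point to check carefully is that the diagonal infimum uses only the diagonal entries $(Le_{i},e_{i})$. This is legitimate because positivity of $L$ forces $L$ to vanish once all its diagonal entries vanish. Finite dimensionality is used in two places: to have a common orthonormal eigenbasis, and to conclude $L=0$ from $Le_{i}=0$ on a basis. A possible infinite-dimensional analogue would fail because the spectral measure need not have atoms.
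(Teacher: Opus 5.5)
Your argument is correct and is essentially the paper's proof: simultaneous unitary diagonalization, the diagonal matrix $M$ of entrywise infima, and maximality because any positive lower bound of $\frakM-M$ has zero diagonal and hence vanishes (the paper applies this to $L-M$ for $L\in\frakL$ with $L\geq M$), with uniqueness from Theorems~\ref{thm6.3a} and~\ref{thm6.1}. Only your opening paragraph is off: the proof never reduces to a finite subset or uses Theorem~\ref{thm6.4}, and such a reduction would fail when the infima are not attained, so that plan should be deleted.
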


\begin{proof}
Since the matrices of $\frakM$ can be diagonalized simultaneously by a
unitary matrix, see \cite[Thm.~2.5.5]{HornJohnson2013}, 
according to \eqref{eq2.4} it is enough to prove the
result for a set $\frakM$ of diagonal $n\times n$ matrices. Let $M$ be
a diagonal matrix, whose $k$-th element on the principal diagonal is
equal to the infimum of the $k$-th elements on the principal diagonal
of the matrices of $\frakM$, $k=1,\ldots,n$. Clearly, $M\in\frakL$ and
$M$ commutes with all matrices of $\frakM$. If $L\in\frakL$ and $M\leq
L$, the $k$-th elements on the principal diagonals of $M$ and $L$
coincide. The relation $L-M\in\frakS^{+}$ implies that $L$ is a diagonal
matrix, hence $L=M$ and $M\in\Max\frakL$. The uniqueness property is a
consequence of Theorem~\ref{thm6.3a} and Theorem~\ref{thm6.1}.
\end{proof}

We conclude the first part of the present section with a remark that
essential assertions on $\frakL^{com}(\frakM)$ seem to be not known if
the operators of $\frakM$ do not commute pairwise. 

Let $\frakL^{+}:=\frakL^{+}(\frakM)$ denote the set
$\frakL^{+}=\frakL\cap\frakS^{+}$ of all positive lower bounds of
$\frakM$. Since $\frakL^{+}$ is not empty if and only if $\frakM$ is a
set of positive operators, to the end of our paper we shall require that
$\frakM\subseteq\frakS^{+}$. It is easy to see that $\frakL^{+}$
has the greatest element $G(\frakL^{+})$ if and only if there exists a
unique positive maximal element $M$ of $\frakL$ and then $G(\frakL^{+})=M$.

\begin{theorem}
\label{thm6.7a}
If
\begin{equation}
  \label{eq6.5}
  \gamma:=\inf\bigl\{(Au,u):A\in\frakM, u\in\mathscr H,\|u\|=1\bigr\}>0,
\end{equation}
the greatest element $G(\frakL)$ of $\frakL$ exists if and only if the
greatest element $G(\frakL^{+})$ of $\frakL^{+}$ exists. In this case, $G(\frakL)=G(\frakL^{+})$.
\end{theorem}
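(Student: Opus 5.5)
The key observation is that condition \eqref{eq6.5} says exactly that $\gamma I\leq A$ for all $A\in\frakM$, so $\gamma I\in\frakL$ with $\gamma>0$; in particular $\gamma I\in\frakL^{+}$. The whole proof reduces to comparing maximal elements of $\frakL$ and of $\frakL^{+}$. The plan is to show that under \eqref{eq6.5} every maximal lower bound of $\frakM$ is positive, and that the maximal elements of $\frakL^{+}$ coincide with those of $\frakL$. Then we apply Theorem~\ref{thm2.1}, together with the remark preceding the theorem that $G(\frakL^{+})$ exists if and only if there is a unique positive maximal element of $\frakL$.

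First step: let $M\in\Max\frakL$. The set $\frakL$ is convex, so $\tfrac12(M+\gamma I)\in\frakL$. This alone does not force positivity, so instead I would use the Zorn-type statement of Theorem~\ref{thm2.1} differently. Set $M':=\tfrac12\bigl(M+\gamma I+|M-\gamma I|\bigr)$, the ``commutative maximum'' of $M$ and $\gamma I$. This is a function of $M$ alone, so spectral calculus applies. In the spectral representation of $M$ it equals $\max\{t,\gamma\}$ applied to $M$, hence $M'\geq M$ and $M'\geq\gamma I$.

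The real issue is whether $M'\in\frakL$, i.e. whether $M'\leq A$ for all $A\in\frakM$. This is not automatic, because the maximum of two lower bounds need not be a lower bound. This is the step I expect to be the main obstacle.

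A cleaner route avoids it. Take $M\in\Max\frakL$ and suppose $M$ is not positive. For the direction actually needed, namely ``$G(\frakL^{+})$ exists $\Rightarrow$ $G(\frakL)$ exists'', it suffices to show that every $L\in\frakL$ lies below some element of $\frakL^{+}$. Given $L\in\frakL$, consider $L_t:=(1-t)L+t\gamma I\in\frakL$ for $t\in[0,1]$. Since we want $L\leq$ something positive, I would instead use that $\max$ with $\gamma I$ works \emph{relative to the Schur-complement structure}. If this fails, fall back on the following: by Theorem~\ref{thm2.1} there is $M\in\Max\frakL$ with $L\leq M$. Then show $M\geq0$ via Theorem~\ref{thm4.2}. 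If $M$ had a negative direction $u$, i.e. $(Mu,u)<0$, then $(A-M)u$ would be bounded below by $(\gamma+|(Mu,u)|)\|u\|^{2}$ uniformly in $A$. Lemma~\ref{lem3.1} applied to $\frakM-M$ (bounded, or using its second assertion with $u$) would then produce a nonzero positive lower bound of $\frakM-M$, contradicting maximality of $M$. The boundedness hypothesis in Lemma~\ref{lem3.1} needs care; I would try to get around it by working with $\frakM\cap\{A:\|A\|\leq R\}$ only if needed, or by observing that the construction in Lemma~\ref{lem3.1} only requires $a_1-l_1$ bounded below.

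Granting that $\Max\frakL\subseteq\frakS^{+}$, the conclusion follows quickly. Every element of $\frakL^{+}$ lies below some $M\in\Max\frakL$, and this $M$ is positive, hence $\Max\frakL^{+}=\Max\frakL$. If $G(\frakL^{+})$ exists, then $\Max\frakL^{+}$ is a singleton, so $\Max\frakL$ is a singleton and Theorem~\ref{thm2.1} gives that $G(\frakL)=\inf\frakM$ exists and equals $G(\frakL^{+})$. Conversely, if $G(\frakL)$ exists, then $G(\frakL)\geq\gamma I\geq0$, so it lies in $\frakL^{+}$ and is obviously its greatest element.
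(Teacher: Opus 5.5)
There is a genuine gap. Your whole argument rests on the claim that \eqref{eq6.5} forces $\Max\frakL\subseteq\frakS^{+}$, and this is false. Take $A=\begin{psmallmatrix}2&0\\0&1\end{psmallmatrix}$ and $B=\begin{psmallmatrix}1&0\\0&2\end{psmallmatrix}$, so $\gamma=1$, and $M=\begin{psmallmatrix}-3&-2\sqrt5\\-2\sqrt5&-3\end{psmallmatrix}$. Then $A-M$ and $B-M$ are positive of rank one, with kernels spanned by $\begin{psmallmatrix}2\\-\sqrt5\end{psmallmatrix}$ and $\begin{psmallmatrix}\sqrt5\\-2\end{psmallmatrix}$. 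These two vectors span $\mathbb C^{2}$, so $M\in\Max\frakL$ by Corollary~\ref{cor4.3}, yet $(Mu,u)=-3$ for $u=\begin{psmallmatrix}1\\0\end{psmallmatrix}$.

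The error is in how you use Lemma~\ref{lem3.1}. Its second assertion only produces a lower bound $\begin{psmallmatrix}l_{1}&0\\0&l_{2}I\end{psmallmatrix}$ of $\frakM-M$ with $l_{2}<0$. That shows $0$ is not the \emph{infimum} of $\frakM-M$. Maximality of $M$ means $0$ is the only \emph{positive} lower bound of $\frakM-M$, and a lower bound with $l_{2}<0$ does not dominate $0$, so nothing is contradicted. Your argument also never uses $(Mu,u)<0$, only $\inf_{A}((A-M)u,u)>0$. If it were valid, every maximal lower bound of a bounded set would be its infimum (\cf Theorem~\ref{thm3.3}), contradicting Kadison's theorem. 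In general you only have $\Max\frakL^{+}=\Max\frakL\cap\frakS^{+}$, so the step from ``$\Max\frakL^{+}$ is a singleton'' to ``$\Max\frakL$ is a singleton'' is unjustified.

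The missing idea is to use the existence of $G(\frakL^{+})$ itself to keep positivity. Your convex combination $(1-t)L+t\gamma I$ has the wrong partner: it only yields the weaker bound $L\leq G(\frakL^{+})+\tfrac{t}{1-t}\bigl(G(\frakL^{+})-\gamma I\bigr)$. Instead, for $L\in\frakL$ combine it with $G(\frakL^{+})\geq\gamma I$. The operator $(1-\varepsilon)G(\frakL^{+})+\varepsilon L\in\frakL$ is bounded below by $\bigl((1-\varepsilon)\gamma-\varepsilon\|L\|\bigr)I\geq0$ for small $\varepsilon>0$. Hence it lies in $\frakL^{+}$ and is at most $G(\frakL^{+})$, which gives $\varepsilon L\leq\varepsilon G(\frakL^{+})$, that is, $L\leq G(\frakL^{+})$. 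This is the paper's argument, phrased there by contradiction. Your other direction, that $G(\frakL)\geq\gamma I\geq0$ makes $G(\frakL)$ the greatest element of $\frakL^{+}$, is correct.
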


\begin{proof}
If $G(\frakL)$ exists, it is positive, and, hence, also the greatest
element of $\frakL^{+}$. Conversely, let $G(\frakL^{+})$ exist. Since
$\gamma I\in\frakL^{+}$ by \eqref{eq6.5},
\begin{equation}
\label{eq6.6}
G(\frakL^{+})\geq\gamma I.  
\end{equation}
If $G(\frakL)$ would not exist, we could find $L\in\frakL$ and
$v\in\mathscr H$ with
\[
  \bigl(G(\frakL^{+})v,v\bigr) < (Lv,v),
\]
hence, for $\varepsilon\in(0,1)$,
\begin{equation}
\label{eq6.7}
\bigl(\bigl((1-\varepsilon)G(\frakL^{+})+\varepsilon L\bigr)v,v\bigr) >
  (1-\varepsilon)\bigl(G(\frakL^{+})v,v\bigr) +
  \varepsilon\bigl(G(\frakL^{+})v,v\bigr) 
= \bigl(G(\frakL^{+})v,v\bigr).
\end{equation}
Since from \eqref{eq6.6} follows that $(1-
\varepsilon)G(\frakL^{+})+\varepsilon L\in\frakL^{+}$ for $\varepsilon$
small enough, relation \eqref{eq6.7} contradicts the assumption that
$G(\frakL^{+})$ is the greatest element of $\frakL^{+}$.
\end{proof}

The following Theorems~\ref{thm6.8} and \ref{thm6.11} generalize
Proposition~2.4 and Theorem~2.5, resp., of
\cite{GheondeaGudderJonas2005}, stated there for a set $\frakM$ of two operators.

Let $\mathscr K$ denote the subspace 
\begin{equation}
\label{eq6.7a}
\mathscr K:=
\overline{\bigcap_{A\in\frakM}\mathscr R(A^{\sfrac{1}{2}})}.
\end{equation}
Let $\mathscr H_{2}$ be a subspace of $\mathscr H$ satisfying
$\mathscr K\subseteq\mathscr H_{2}$  and $\mathscr H_{1}$ be its orthogonal
complement. Partition $S\in\frakS$ according to \eqref{eq2.5} and
define $S/S_{1}$ and $\frakM/\mathscr H_{1}$ according to
\eqref{eq2.6}  and \eqref{eq2.7}, resp..

\begin{theorem}
\label{thm6.8}
The greatest element $G(\frakL^{+})$ of $\frakL^{+}$ exists if and only
if the greatest element $G\bigl(\frakL^{+}(\frakM/\mathscr H_{1})\bigr)$
of $\frakL^{+}(\frakM/\mathscr H_{1})$ exists. In this case,
$G(\frakL^{+})$ has the form
\[
  G(\frakL^{+}) =
  \begin{pmatrix}
  0 & 0 & \\ 0 & G\bigl(\frakL^{+}(\frakM/\mathscr H_{1})\bigr)
  \end{pmatrix}.
\]
\end{theorem}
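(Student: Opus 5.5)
The plan is to show that every positive lower bound of $\frakM$ vanishes outside $\mathscr H_{2}$, and then that, among operators of that form, being a lower bound of $\frakM$ is the same as the $\mathscr H_{2}$-corner being a positive lower bound of $\frakM/\mathscr H_{1}$. Once this correspondence is in place, the two greatest elements exist simultaneously and have the stated form.

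\textbf{Step 1 (positive lower bounds live on $\mathscr H_{2}$).} Let $L\in\frakL^{+}$. Since $L\leq A$ for every $A\in\frakM$, Douglas' theorem (as used in the proof of Lemma~\ref{lem4.1}) gives $\mathscr R(L^{\sfrac12})\subseteq\mathscr R(A^{\sfrac12})$. Hence
\[
\mathscr R(L^{\sfrac12})\subseteq\bigcap_{A\in\frakM}\mathscr R(A^{\sfrac12})\subseteq\mathscr K\subseteq\mathscr H_{2}.
\]
Therefore $\overline{\mathscr R(L)}\subseteq\mathscr H_{2}$, so $L$ vanishes on $\mathscr H_{1}$. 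By self-adjointness, $L=\begin{psmallmatrix}0&0\\0&L_{2}\end{psmallmatrix}$ with $L_{2}\geq0$ on $\mathscr H_{2}$.

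\textbf{Step 2 (the correspondence).} For $L$ of this form and $A\in\frakM$, I would write
\[
A-L=\begin{pmatrix}A_{1}&A_{12}\\A_{12}^{\ast}&A_{2}-L_{2}\end{pmatrix}
\]
and apply Theorem~\ref{thm2.2}. Condition (i) is $A_{1}\geq0$, and condition (ii) is $\mathscr R(A_{12})\subseteq\mathscr R(A_{1}^{\sfrac12})$. Neither involves $L$, and both hold because $A\geq0$ (Theorem~\ref{thm2.2} applied to $A$ itself). Condition (iii) reads $(A_{2}-L_{2})-\bigl((A_{1}^{\#})^{\sfrac12}A_{12}\bigr)^{\ast}(A_{1}^{\#})^{\sfrac12}A_{12}\geq0$, which by \eqref{eq2.6} is exactly $A/A_{1}-L_{2}\geq0$.

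Thus $L\in\frakL^{+}(\frakM)$ if and only if $L_{2}\in\frakL^{+}(\frakM/\mathscr H_{1})$. The set $\frakM/\mathscr H_{1}$ consists of positive operators by Theorem~\ref{thm2.2}~(iii) applied to each $A$. Conversely, for every $L_{2}\in\frakL^{+}(\frakM/\mathscr H_{1})$ the operator $\begin{psmallmatrix}0&0\\0&L_{2}\end{psmallmatrix}$ lies in $\frakL^{+}(\frakM)$.

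\textbf{Step 3 (conclusion).} The map $L_{2}\mapsto\begin{psmallmatrix}0&0\\0&L_{2}\end{psmallmatrix}$ is an order isomorphism from $\frakL^{+}(\frakM/\mathscr H_{1})$ onto $\frakL^{+}(\frakM)$: Step 2 gives bijectivity, and the order on block-diagonal operators with zero $\mathscr H_{1}$-corner is the order of the $\mathscr H_{2}$-corners. Hence a greatest element exists on one side if and only if it exists on the other, and the two correspond, which is the claimed form of $G(\frakL^{+})$.

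The main obstacle is Step 1. It needs the inclusion $\overline{\mathscr R(L^{\sfrac12})}\subseteq\mathscr H_{2}$ to force the off-diagonal blocks of $L$ to vanish. This holds because $\mathscr N(L)=\mathscr R(L^{\sfrac12})^{\perp}\supseteq\mathscr H_{1}$, so $L$ is zero on $\mathscr H_{1}$ and, being self-adjoint, also maps into $\mathscr H_{2}$.

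A second point to check is that the Schur-complement condition in Theorem~\ref{thm2.2}~(iii) is used with the unbounded square root $(A_{1}^{\#})^{\sfrac12}$. Since $L$ does not touch $A_{1}$ or $A_{12}$, this causes no extra difficulty.
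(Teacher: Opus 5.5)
Your proposal is correct and follows essentially the same route as the paper's proof. Douglas' theorem gives $\mathscr R(L^{\sfrac{1}{2}})\subseteq\mathscr K\subseteq\mathscr H_{2}$, which forces $L=\begin{psmallmatrix}0&0\\0&L_{2}\end{psmallmatrix}$, and Theorem~\ref{thm2.2} then identifies $\frakL^{+}(\frakM)$ order-isomorphically with $\frakL^{+}(\frakM/\mathscr H_{1})$. You spell out details the paper leaves implicit, namely that conditions (i)--(ii) do not involve $L$ and that the correspondence is an order isomorphism.
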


\begin{proof}
If $L\in\frakL^{+}$, $\mathscr R(L^{\sfrac{1}{2}})\subseteq\mathscr K$
by Douglas' theorem \cite{Douglas1966}. It follows $\mathscr
H_{1}\subseteq\mathscr N(L)$ and
\begin{equation}
\label{eq6.8}
L= \begin{pmatrix} 0&0\\0&L_{2} \end{pmatrix}  
\end{equation}
for some $L_{2}\in\frakL^{+}(\frakM/\mathscr H_{1})$ by
Theorem~\ref{thm2.2}. Conversely, if $L$  has the form \eqref{eq6.8},
it belongs to $\frakL^{+}$. Thus, the assertion of the theorem follows immediately.
\end{proof}

\begin{corollary}
\label{cor6.9}
If $\dim\mathscr K$ is at most $1$, the greatest element of $\frakL^{+}$
exists. 
\end{corollary}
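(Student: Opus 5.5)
Corollary~\ref{cor6.9} follows from Theorem~\ref{thm6.8} once $\mathscr H_{2}$ is chosen to be $\mathscr K$ itself, and I would split into two cases according to $\dim\mathscr K$.

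\emph{Case $\dim\mathscr K=0$.} Then $\bigcap_{A\in\frakM}\mathscr R(A^{\sfrac{1}{2}})=\{0\}$. By the first assertion of Lemma~\ref{lem4.1}, the zero operator is the only positive lower bound of $\frakM$. Hence $\frakL^{+}=\{0\}$, and $G(\frakL^{+})=0$ exists. Theorem~\ref{thm6.8} is not needed here; it also does not apply literally, because the block decomposition requires $\mathscr H_{2}\neq\{0\}$.

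\emph{Case $\dim\mathscr K=1$.} If $\mathscr K=\mathscr H$, then $\dim\mathscr H=1$. In that case $\frakS$ is just $\mathbb R$ with its usual order, so $\frakM$ is a bounded-below set of nonnegative reals. Its infimum exists and is nonnegative, so it is the greatest positive lower bound.

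Otherwise set $\mathscr H_{2}:=\mathscr K$ and $\mathscr H_{1}:=\mathscr K^{\perp}$. This is a nontrivial orthogonal decomposition, and Theorem~\ref{thm6.8} applies. It reduces the existence of $G(\frakL^{+})$ to the existence of the greatest element of $\frakL^{+}(\frakM/\mathscr H_{1})$. Now $\frakM/\mathscr H_{1}$ is a set of operators on the one-dimensional space $\mathscr H_{2}$, i.e.\ real numbers $S/S_{1}$. These numbers are nonnegative by Theorem~\ref{thm2.2}~(iii), since every $S\in\frakM$ is positive. The positive lower bounds of this set are the numbers $l\in[0,\inf\{S/S_{1}:S\in\frakM\}]$. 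Hence the greatest element is $\inf\{S/S_{1}:S\in\frakM\}\geq0$, which exists in $\mathbb R$. Theorem~\ref{thm6.8} then gives $G(\frakL^{+})$, embedded as the $(2,2)$ block.

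The main point needing care is the bookkeeping: handling the degenerate cases $\mathscr K=\{0\}$ and $\mathscr K=\mathscr H$ separately, and confirming that the Schur complements are positive scalars so that the infimum is a legitimate positive lower bound. There is no real analytic obstacle; the statement is essentially the observation that $\frakS(\mathbb C)=\mathbb R$ is totally ordered.
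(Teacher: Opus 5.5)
Your proof is correct and follows the paper's argument. For $\mathscr K=\{0\}$ it uses Lemma~\ref{lem4.1}, and for $\dim\mathscr K=1$ it uses Theorem~\ref{thm6.8} with $\mathscr H_{2}:=\mathscr K$, relying on the fact that the Schur complements are nonnegative reals. Your separate treatment of $\mathscr K=\mathscr H$, where the block decomposition degenerates, is a small piece of extra care that the paper leaves implicit.
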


\begin{proof}
If $\mathscr K=\{0\}$, from the first assertion of Lemma~\ref{lem4.1}
follows $G(\frakL^{+})=0$. If $\dim\mathscr K=1$, choose
$\mathscr H_{2}:=\mathscr K$. Since the elements of
$\frakL^{+}(\frakM/\mathscr H_{1})$ are nonnegative real numbers,
$G\bigl(\frakL^{+}(\frakM/\mathscr H_{1})\bigr)$ exists, and the
result follows from Theorem~\ref{thm6.8}. 
\end{proof}

\begin{corollary}
\label{cor6.10}
Let $A$ be a positive contraction and $P$ be a projection. The
greatest element of $\frakL\bigl(\{A,P\}\bigr)$ exists and is equal to
$A/A_{1}$, where $\mathscr H_{1}$ is the orthogonal complement of
$\mathscr H_{2}=\mathscr R(P)$.
\end{corollary}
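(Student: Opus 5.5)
We work throughout with the decomposition $\mathscr H=\mathscr H_{1}\oplus\mathscr H_{2}$, where $\mathscr H_{2}=\mathscr R(P)$ and $\mathscr H_{1}=\mathscr N(P)$. In block form $P=\begin{psmallmatrix}0&0\\0&I\end{psmallmatrix}$, and $A$ is partitioned as in \eqref{eq2.5}. As in Theorem~\ref{thm6.8}, we read $A/A_{1}$ as the shorted operator
\[
S:=\begin{pmatrix}0&0\\0&A/A_{1}\end{pmatrix}.
\]
The proof has two steps: first $S\in\frakL(\{A,P\})$, then $L\leq S$ for every $L\in\frakL(\{A,P\})$.

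\emph{Step 1: $S$ is a lower bound.} The difference
\[
A-S=\begin{pmatrix}A_{1}&A_{12}\\A_{12}^{\ast}&A_{2}-A/A_{1}\end{pmatrix}
\]
satisfies (i) and (ii) of Theorem~\ref{thm2.2}, because $A\geq0$. Its generalized Schur complement equals $(A_{2}-A/A_{1})-(A_{2}-A/A_{1})=0$, so (iii) holds and $S\leq A$. Next, $0\leq A/A_{1}\leq A_{2}\leq I$, because $A$ is a contraction. Hence $P-S=\begin{psmallmatrix}0&0\\0&I-A/A_{1}\end{psmallmatrix}\geq0$, so $S\leq P$.

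\emph{Step 2: $S$ is the greatest lower bound.} Let $L\in\frakL$ and write $L$ in blocks. From $P-L\geq0$ and Theorem~\ref{thm2.2}(i) we get $-L_{1}\geq0$. Condition (ii) gives $\mathscr R(L_{12})\subseteq\mathscr R\bigl((-L_{1})^{\sfrac12}\bigr)$, so the map $x\mapsto\bigl(L(x,u_{2}),(x,u_{2})\bigr)$ is a concave quadratic form bounded above for each fixed $u_{2}\in\mathscr H_{2}$. We use the variational description of the shorted operator from \cite{AndersonTrapp1975}:
\[
\bigl((A/A_{1})u_{2},u_{2}\bigr)=\inf_{v_{1}\in\mathscr H_{1}}\bigl(A(v_{1},u_{2}),(v_{1},u_{2})\bigr).
\]
With this, $L\leq S$ reduces to the inequality
\[
\sup_{x\in\mathscr H_{1}}\bigl(L(x,u_{2}),(x,u_{2})\bigr)\leq\inf_{v_{1}\in\mathscr H_{1}}\bigl(A(v_{1},u_{2}),(v_{1},u_{2})\bigr),\quad u_{2}\in\mathscr H_{2}.
\]

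This inequality is where I expect the main difficulty. The condition $L\leq A$ alone only gives the comparison at matching first components $x=v_{1}$, so the information from $L\leq P$ must be used to move from $v_{1}$ to an arbitrary $x$. My first attempt is as follows. Write $u=(x,u_{2})$ as $w+z$ with $w=(v_{1},u_{2})$ and $z=(x-v_{1},0)\in\mathscr H_{1}$, and expand $(Lu,u)$. Bound $(Lw,w)$ by $(Aw,w)$. On $\mathscr H_{1}$ we have $(Pz,z)=0$, so $(P-L)\geq0$ forces the cross term $\operatorname{Re}(Lw,z)$ to be controlled by the positive operator $P-L$ via Cauchy--Schwarz. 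Because $(Lz,z)\leq0$, the remaining terms should then be absorbed by optimizing over $z$.

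If this direct route stalls, I will try a reduction to positive lower bounds instead. Compare $L$ with $\widetilde L:=\begin{psmallmatrix}0&0\\0&L/L_{1}\end{psmallmatrix}$, which is formed from $-L_{1}\geq0$ exactly as the Schur complement in Theorem~\ref{thm2.2}, so that $L\leq\widetilde L$ on the relevant forms. Then show that $\widetilde L$ still lies below $A$ and $P$, and finally invoke the extremal property of the shorted operator: $S$ is the greatest operator below $A$ supported on $\mathscr H_{2}$, in the spirit of Theorem~\ref{thm6.8}.
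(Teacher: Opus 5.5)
\textbf{There is a genuine gap in Step~2.} Your Step~1 is correct, and it also shows $S\geq 0$. Step~2, however, cannot be completed: the inequality you reduce to is false for a general $L\in\frakL(\{A,P\})$.

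By Kadison's theorem, recalled in the introduction, $\{A,P\}$ has an infimum only if $A$ and $P$ are comparable. So in general $\frakL(\{A,P\})$ has no greatest element at all. Here is a concrete instance in $\mathbb C^{2}$:
\begin{itemize}
\item[] Take $P=\begin{psmallmatrix}0&0\\0&1\end{psmallmatrix}$ and $A=\tfrac12 I$, so $S=\begin{psmallmatrix}0&0\\0&1/2\end{psmallmatrix}$.
\item[] Take $L=\begin{psmallmatrix}-1/4&1/2\\1/2&0\end{psmallmatrix}$.
\item[] Then $A-L\geq0$ (determinant $1/8$) and $P-L\geq0$ (determinant $0$, nonnegative diagonal).
\item[] But $\det(S-L)=-1/8$, so $L\not\leq S$.
\end{itemize}
In your variational form this reads $\sup_{x}(x-x^{2}/4)=1>1/2$. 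Your fallback fails on the same example: $L/L_{1}=0-\tfrac12\cdot(-4)\cdot\tfrac12=1$, so $\widetilde L=P$, which is not below $A$. No estimate that uses only $L\leq A$ and $L\leq P$ can close this step.

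\textbf{What the paper actually proves.} The paper deduces the corollary from Theorem~\ref{thm6.8}, which concerns $G(\frakL^{+})$. It applies that theorem with $\mathscr K\subseteq\mathscr R(P)=\mathscr H_{2}$, $P/P_{1}=I$ and $A/A_{1}\leq I$. So what is proved is that $S$ is the greatest element of $\frakL^{+}(\{A,P\})$, the \emph{positive} lower bounds. The $\frakL$ in the statement must be read as $\frakL^{+}$.

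\textbf{The missing idea is to use $L\geq0$.}
\begin{itemize}
\item[] If $0\leq L\leq P$, then $(Lu,u)\leq(Pu,u)=0$ for $u\in\mathscr H_{1}$.
\item[] Hence $\mathscr H_{1}\subseteq\mathscr N(L)$, so $L=\begin{psmallmatrix}0&0\\0&L_{2}\end{psmallmatrix}$.
\item[] Theorem~\ref{thm2.2}(iii) applied to $A-L\geq0$ then says exactly $A/A_{1}-L_{2}\geq0$, i.e.\ $L\leq S$.
\end{itemize}
Together with your Step~1 this gives $S=G(\frakL^{+})$. No variational formula or cross-term estimate is needed.
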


\begin{proof}
The assertion follows from Theorem~\ref{thm6.8} and the fact that
$A/A_{1}$ is a positive contraction if $A$ is.
\end{proof}

\begin{corollary}
\label{cor6.10a}
If the infimum of the set $\frakM/\mathscr K^{\perp}$ exists, then the
set $\frakL^{+}(\frakM)$ has the greatest element.
\end{corollary}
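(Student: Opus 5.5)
Apply Theorem~\ref{thm6.8} with the choice $\mathscr H_{2}:=\mathscr K$, so that $\mathscr H_{1}=\mathscr K^{\perp}$ and $\frakM/\mathscr H_{1}=\frakM/\mathscr K^{\perp}$ is a set of operators on $\mathscr K$. By Theorem~\ref{thm6.8}, it suffices to show that the greatest element of $\frakL^{+}(\frakM/\mathscr K^{\perp})$ exists. Set $G:=\inf(\frakM/\mathscr K^{\perp})$, which exists by hypothesis. Each element $A/A_{1}$ of $\frakM/\mathscr K^{\perp}$ is positive by Theorem~\ref{thm2.2}(iii), since $A\geq0$. Hence $0$ is a lower bound of $\frakM/\mathscr K^{\perp}$, and so $G\geq 0$. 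Thus $G\in\frakL^{+}(\frakM/\mathscr K^{\perp})$.

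Now let $L_{2}\in\frakL^{+}(\frakM/\mathscr K^{\perp})$. Then $L_{2}$ is in particular a lower bound of $\frakM/\mathscr K^{\perp}$, so $L_{2}\leq G$. Therefore $G$ is the greatest element of $\frakL^{+}(\frakM/\mathscr K^{\perp})$. Theorem~\ref{thm6.8} then yields the existence of $G(\frakL^{+})$, namely
\[
G(\frakL^{+})=\begin{pmatrix}0&0\\0&G\end{pmatrix}.
\]

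The only points needing care are the following. First, $\mathscr K$ must be admissible as $\mathscr H_{2}$ in Theorem~\ref{thm6.8}; this holds trivially because $\mathscr K\subseteq\mathscr K$. Second, the degenerate cases $\mathscr K=\{0\}$ and $\mathscr K=\mathscr H$ must be handled. If $\mathscr K=\{0\}$, Lemma~\ref{lem4.1} directly gives $G(\frakL^{+})=0$. If $\mathscr K=\mathscr H$, then $\frakM/\mathscr K^{\perp}=\frakM$, and the argument above applies verbatim without any block decomposition. No real obstacle is expected beyond the positivity of the generalized Schur complements.
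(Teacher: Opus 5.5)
Your proof is correct and follows the paper's argument. Positivity of the Schur complements makes $\inf(\frakM/\mathscr K^{\perp})$ the greatest element of $\frakL^{+}(\frakM/\mathscr K^{\perp})$, and Theorem~\ref{thm6.8} with $\mathscr H_{2}=\mathscr K$ then gives $G(\frakL^{+})$; your separate handling of the degenerate cases $\mathscr K=\{0\}$ and $\mathscr K=\mathscr H$ is a harmless extra that the paper leaves implicit.
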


\begin{proof}
If the set $\frakM/\mathscr K^{\perp}$ of positive operators has the
infimum or, equivalently, the set $\frakL(\frakM/\mathscr K^{\perp})$  
has the greatest element, it is positive and clearly the greatest
element of $\frakL^{+}(\frakM/\mathscr K^{\perp})$. Now
Theorem~\ref{thm6.8} can be applied.
\end{proof}

\begin{theorem}
\label{thm6.11}
Let $\frakM$ be a finite subset of $\frakS^{+}$. Let $\mathscr H_{1}$
be a subspace of $\mathscr H$ and $\mathscr H_{2}$ be its orthogonal
complement.  If\; $G\bigl(\frakL^{+}(\frakM/\mathscr H_{1})\bigr)$
exists and $G(\frakL^{+})$ is equal to
\begin{equation}
\label{eq6.9}
G(\frakL^{+}) =
   \begin{pmatrix} 0 & 0 \\
        0 & G\bigl(\frakL^{+}(\frakM/\mathscr H_{1})\bigr) 
   \end{pmatrix}, 
\end{equation}
the inclusion $\mathscr K\subseteq\mathscr H_{2}$ is satisfied.
\end{theorem}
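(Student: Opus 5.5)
The plan is to compare the given greatest element with one specific positive lower bound of $\frakM$ whose square-root range is known exactly. The natural candidate is the parallel sum, already used in the proof of Lemma~\ref{lem4.1}. Finiteness of $\frakM$ enters only here, and it is the one point needing care.

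First, let $S$ denote the parallel sum of the finitely many operators of $\frakM$, formed inductively from the two-operator parallel sum. As recalled in the proof of Lemma~\ref{lem4.1}, $S$ is a positive lower bound of $\frakM$ and
\[
\mathscr R(S^{\sfrac{1}{2}})=\bigcap_{A\in\frakM}\mathscr R(A^{\sfrac{1}{2}}),
\]
by \cite[Thm.~4.2]{FillmoreWilliams1971} or \cite[Thms.~10, 11]{AndersonTrapp1975}. For more than two operators this follows by induction from the two-operator formula $\mathscr R((A:B)^{\sfrac{1}{2}})=\mathscr R(A^{\sfrac{1}{2}})\cap\mathscr R(B^{\sfrac{1}{2}})$. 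This is the step I expect to be the main obstacle, since it uses finiteness in an essential way; I would simply invoke it as in Lemma~\ref{lem4.1}.

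Second, $S\in\frakL^{+}$, so $S\leq G(\frakL^{+})=:G$. Since $S$ and $G$ are positive and $S\leq G$, Douglas' theorem \cite{Douglas1966} gives $\mathscr R(S^{\sfrac{1}{2}})\subseteq\mathscr R(G^{\sfrac{1}{2}})$. By \eqref{eq6.9}, $G$ is block diagonal with respect to $\mathscr H=\mathscr H_{1}\oplus\mathscr H_{2}$, with the zero block on $\mathscr H_{1}$. Hence
\[
G^{\sfrac{1}{2}}=\begin{pmatrix}0&0\\0&\bigl(G(\frakL^{+}(\frakM/\mathscr H_{1}))\bigr)^{\sfrac{1}{2}}\end{pmatrix},
\]
so $\mathscr R(G^{\sfrac{1}{2}})\subseteq\mathscr H_{2}$.

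Combining the two steps gives $\bigcap_{A\in\frakM}\mathscr R(A^{\sfrac{1}{2}})\subseteq\mathscr H_{2}$. Since $\mathscr H_{2}$ is closed, taking closures yields $\mathscr K\subseteq\mathscr H_{2}$ by the definition \eqref{eq6.7a}. The hypothesis that $G(\frakL^{+}(\frakM/\mathscr H_{1}))$ exists is used only to make \eqref{eq6.9} meaningful; what is actually needed is the vanishing of the $\mathscr H_{1}$-blocks of $G(\frakL^{+})$.
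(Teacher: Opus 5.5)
Your proof is correct and is essentially the paper's argument: both compare $G(\frakL^{+})$ with the parallel sum $S\in\frakL^{+}$, whose square-root range has closure $\mathscr K$. The paper concludes via null spaces (in effect $\mathscr H_{1}\subseteq\mathscr N\bigl(G(\frakL^{+})\bigr)\subseteq\mathscr N(S)=\mathscr K^{\perp}$), while you use the range inclusion $\mathscr R(S^{\sfrac{1}{2}})\subseteq\mathscr R\bigl(G(\frakL^{+})^{\sfrac{1}{2}}\bigr)\subseteq\mathscr H_{2}$ from Douglas' theorem, which is the same step in orthogonal-complement form.
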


\begin{proof}
Let $S$ be the parallel sum of the operators of $\frakM$. Since
$S\in\frakL^{+}$ and
$\overline{\mathscr R(S^{\sfrac{1}{2}})}=\mathscr K$, it
follows $G(\frakL^{+})u\neq0$, $u\in \mathscr
K\setminus\{0\}$. Therefore, if  $G(\frakL^{+})$ has the form
\eqref{eq6.9} according to the orthogonal decomposition $\mathscr
H=\mathscr H_{1}\oplus\mathscr H_{2}$, the space $\mathscr H_{1}$ is a      
subspace of the orthogonal complement of $\mathscr K$ or, what is the      
same, $\mathscr K\subseteq\mathscr H_{2}$.
\end{proof}

Finally, let us discuss generalizations of Ando's theorem, \cf
Theorem~\ref{thm1.3} above. For $A,B\in\frakS^{+}$, denote by $A:B$
its  parallel sum. Following Ando~\cite{Ando1999}, we set
\[
  [A]B:=\lim_{m\to\infty}(mA):B.
\]  

\begin{lemma}
\label{lem6.12}  
If $A,B\in\frakS^{+}$, the following assertions are true:
\begin{enumerate}
\item[(i)]
  $[A]B=B$ if\: 
  $\mathscr R(B^{\sfrac{1}{2}})\subseteq\mathscr R(A^{\sfrac{1}{2}})$,
\item[(ii)]
  $[A:B]B=[A]B$.
\end{enumerate}
\end{lemma}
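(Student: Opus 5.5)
Both parts rest on Ando's description of $[A]B$. For $A,B\in\frakS^{+}$, the operator $[A]B$ is the greatest element of
\[
\bigl\{C\in\frakS^{+} : C\leq B,\ \mathscr R(C^{\sfrac{1}{2}})\subseteq\mathscr R(A^{\sfrac{1}{2}})\bigr\}.
\]
If we prefer to avoid citing this from \cite{Ando1999}, it can be derived quickly. Write $C_{m}:=(mA):B$; this sequence is increasing in $m$ and bounded by $B$. By Theorem~\ref{thm2.1a} its strong limit $[A]B$ exists and satisfies $[A]B\leq B$. Now let $C\in\frakS^{+}$ with $C\leq B$ and $\mathscr R(C^{\sfrac{1}{2}})\subseteq\mathscr R(A^{\sfrac{1}{2}})$. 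Douglas' theorem \cite{Douglas1966} gives $\lambda>0$ with $C\leq\lambda A$. Monotonicity and homogeneity of the parallel sum then give
\[
\frac{m}{m+\lambda}\,C = C:\frac{m}{\lambda}C \leq (mA):B,
\]
and letting $m\to\infty$ yields $C\leq[A]B$. Membership of $[A]B$ in the set follows from $\mathscr R\bigl(((mA):B)^{\sfrac{1}{2}}\bigr)=\mathscr R(A^{\sfrac{1}{2}})\cap\mathscr R(B^{\sfrac{1}{2}})$, as recalled in the proof of Lemma~\ref{lem4.1}. Passing to the strong limit is the main technical obstacle here. I would not attempt it directly and would instead rely on Ando's characterization \cite{Ando1999}. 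If pressed, I would show that $[A]B$ is dominated by a multiple of $A$ only on the relevant ranges, via the formula $(mA):B\leq mA$. That bound alone is not uniform in $m$, which is exactly why Ando's result is needed.

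For (i), assume $\mathscr R(B^{\sfrac{1}{2}})\subseteq\mathscr R(A^{\sfrac{1}{2}})$. Then $C=B$ itself belongs to the set, so $B\leq[A]B\leq B$ and hence $[A]B=B$. A direct argument also works: Douglas gives $B\leq\lambda A$, so $(mA):B\geq\frac{m}{m+\lambda}B$, and the limit is $B$.

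For (ii), I compare the two admissible sets. By \cite[Thm.~4.2]{FillmoreWilliams1971} we have $\mathscr R\bigl((A:B)^{\sfrac{1}{2}}\bigr)=\mathscr R(A^{\sfrac{1}{2}})\cap\mathscr R(B^{\sfrac{1}{2}})$. If $C\leq B$, then Douglas gives $\mathscr R(C^{\sfrac{1}{2}})\subseteq\mathscr R(B^{\sfrac{1}{2}})$. So, for positive $C\leq B$, the conditions $\mathscr R(C^{\sfrac{1}{2}})\subseteq\mathscr R(A^{\sfrac{1}{2}})$ and $\mathscr R(C^{\sfrac{1}{2}})\subseteq\mathscr R\bigl((A:B)^{\sfrac{1}{2}}\bigr)$ are equivalent. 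The two sets therefore coincide, and so do their greatest elements, which gives $[A:B]B=[A]B$.
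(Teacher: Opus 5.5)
\textbf{There is a genuine error in the key characterization.} You attribute to Ando the claim that $[A]B$ is the greatest element of
\[
\mathcal C(A,B):=\bigl\{C\in\frakS^{+} : C\leq B,\ \mathscr R(C^{\sfrac{1}{2}})\subseteq\mathscr R(A^{\sfrac{1}{2}})\bigr\}.
\]
This is false when $\dim\mathscr H=\infty$.

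For a counterexample, take $A$ injective, compact and positive, and $B=I$. Then $(mA):I=I-(mA+I)^{-1}\to I$ strongly, so $[A]I=I$. But $\mathscr R(I^{\sfrac{1}{2}})=\mathscr H\not\subseteq\mathscr R(A^{\sfrac{1}{2}})$. In fact $\mathcal C(A,I)$ has no greatest element: it contains all $(mA):I$, which increase to $I$, so a greatest element would have to be $I$.

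So the ``membership'' step you flagged is not merely hard, it fails. The non-uniformity of $(mA):B\leq mA$ that you noticed is exactly why. What holds in general is that $[A]B$ is maximal among those $C\leq B$ that are increasing strong limits of operators dominated by multiples of $A$. That class coincides with $\mathcal C(A,B)$ only when $\dim\mathscr H<\infty$.

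Your part (i) survives. It uses only the half you did prove, namely $C\leq[A]B$ for all $C\in\mathcal C(A,B)$, and your direct argument via $B\leq\lambda A$ is complete. Part (ii), however, passes from equality of the two sets to equality of their ``greatest elements''. That step is unjustified as written.

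\textbf{How to repair (ii).} You already have the ingredients; replace ``greatest element'' by ``least upper bound''. Since $(mA):B\leq B$ and $(mA):B\leq mA$, Douglas' theorem gives $(mA):B\in\mathcal C(A,B)$. Hence every upper bound of $\mathcal C(A,B)$ dominates $[A]B$. Together with your estimate, $[A]B$ is the least upper bound of $\mathcal C(A,B)$.

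Your range argument shows $\mathcal C(A,B)=\mathcal C(A:B,B)$, so the two least upper bounds coincide and $[A:B]B=[A]B$. Concretely:
\begin{itemize}
\item $(mA):B\in\mathcal C(A:B,B)$ gives $(mA):B\leq[A:B]B$, hence $[A]B\leq[A:B]B$;
\item the reverse inequality follows from $A:B\leq A$ and monotonicity of the parallel sum.
\end{itemize}

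\textbf{Comparison with the paper.} With this correction your route is genuinely different. The paper invokes the Kosaki--Pekarev formula $[A]B=B^{\sfrac{1}{2}}P_{A,B}B^{\sfrac{1}{2}}$, where $P_{A,B}$ projects onto the closure of $\{u : B^{\sfrac{1}{2}}u\in\mathscr R(A^{\sfrac{1}{2}})\}$. Then (i) is just $P_{A,B}=I$, and (ii) is $P_{A,B}=P_{A:B,B}$. Your repaired argument avoids that formula and needs only monotonicity and homogeneity of the parallel sum, Douglas' theorem, and the range formula for $(A:B)^{\sfrac{1}{2}}$. The price is working with a supremum rather than a maximum.
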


\begin{proof}
By a theorem of Kosaki~\cite{Kosaki1984} and
Pekarev~\cite{Pekarev1978}, the operator $[A]B$ is equal to
$[A]B=B^{\sfrac{1}{2}}P_{A,B}B^{\sfrac{1}{2}}$, where $P_{A,B}$ is
the projection onto the closure of $\bigl\{u\in\mathscr
H:B^{\sfrac{1}{2}}u\in\mathscr R(A^{\sfrac{1}{2}})\bigr\}$. If
$\mathscr R(B^{\sfrac{1}{2}})\subseteq\mathscr
R(A^{\sfrac{1}{2}})$, the projection $P_{A,B}$ is equal to $I$
and (i) follows. Since
\[
  \mathscr R\bigl((A:B)^{\sfrac{1}{2}}\bigr)=
  \mathscr R(A^{\sfrac{1}{2}})\cap\mathscr R(B^{\sfrac{1}{2}}),
\]
it follows
\[
\bigl\{u\in\mathscr H: B^{\sfrac{1}{2}}u\in
\mathscr R(A^{\sfrac{1}{2}})\bigr\} =
\bigl\{u\in\mathscr H: B^{\sfrac{1}{2}}u\in\mathscr R\bigl((A:B)^{\sfrac{1}{2}}\bigr)\bigr\},
\]  
hence $P_{A,B}=P_{A:B,B}$, which yields (ii).
\end{proof}

\begin{theorem}
\label{thm6.13}
Let $\frakM:=\bigl\{A^{(1)},\ldots,A^{(n)}\bigr\}$ be a finite subset
of $\frakS^{+}$ and $S$ denote the parallel sum of the operators of
$\frakM$. If there exists $k\in\{1,\ldots,n\}$ such that
\begin{equation}
\label{eq6.10}
[S]A^{(k)} \leq [S]A^{(j)},\quad j=1,\ldots,n,  
\end{equation}
the greatest element $G(\frakL^{+})$ of $\frakL^{+}$ exists and is equal
to $[S]A^{(k)}$.
\end{theorem}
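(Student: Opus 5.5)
We have to show two things: that $G:=[S]A^{(k)}$ belongs to $\frakL^{+}$, and that every $L\in\frakL^{+}$ satisfies $L\leq G$.

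\emph{Step 1: $G\in\frakL^{+}$.} Since $[S]A^{(k)}=\lim_{m\to\infty}(mS):A^{(k)}$ in the strong-operator topology, and $(mS):A^{(k)}\leq A^{(k)}$ by the monotonicity properties of the parallel sum, we get $0\leq[S]A^{(k)}\leq A^{(k)}$. Hypothesis \eqref{eq6.10} gives $[S]A^{(k)}\leq[S]A^{(j)}$, and the same argument gives $[S]A^{(j)}\leq A^{(j)}$. Hence $G\leq A^{(j)}$ for all $j$, so $G\in\frakL^{+}$.

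\emph{Step 2: every $L\in\frakL^{+}$ satisfies $L\leq[S]A^{(j)}$ for all $j$, in particular $L\leq G$.} I would use the Kosaki--Pekarev representation from the proof of Lemma~\ref{lem6.12}, or equivalently Ando's characterization of $[A]B$ as the maximum of $\{C\in\frakS^{+}: C\leq B,\ \mathscr R(C^{\sfrac{1}{2}})\subseteq\mathscr R(A^{\sfrac{1}{2}})\}$.

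Let $L\in\frakL^{+}$. By Douglas' theorem, $\mathscr R(L^{\sfrac{1}{2}})\subseteq\mathscr R\bigl((A^{(j)})^{\sfrac{1}{2}}\bigr)$ for every $j$. Hence
\[
\mathscr R(L^{\sfrac{1}{2}})\subseteq\bigcap_{j}\mathscr R\bigl((A^{(j)})^{\sfrac{1}{2}}\bigr)=\mathscr R(S^{\sfrac{1}{2}}),
\]
where the last equality is the range formula for the parallel sum quoted in Lemma~\ref{lem4.1}.

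Lemma~\ref{lem6.12}(i) then gives $[S]L=L$. Monotonicity of $B\mapsto[S]B$, inherited from monotonicity of the parallel sum in each argument, together with $L\leq A^{(j)}$ yields
\[
L=[S]L\leq[S]A^{(j)}.
\]
Taking $j=k$ gives $L\leq G$, so $G=G(\frakL^{+})$.

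\emph{Main obstacle.} The delicate point is justifying that $B\mapsto[S]B$ is monotone, i.e.\ that $L\leq A$ implies $(mS):L\leq(mS):A$ for every $m$. This follows from the standard monotonicity of the parallel sum in each variable (\cite{AndersonTrapp1975}); passing to the strong limit preserves the order. If one prefers to avoid this, one can instead use Ando's maximality description of $[S]A^{(j)}$ directly: $L$ is a positive operator dominated by $A^{(j)}$ with range of $L^{\sfrac{1}{2}}$ inside $\mathscr R(S^{\sfrac{1}{2}})$, so $L\leq[S]A^{(j)}$.
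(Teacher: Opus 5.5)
Your proof is correct and is essentially the paper's argument. You show $[S]A^{(k)}\in\frakL^{+}$ from \eqref{eq6.10} together with $[S]A^{(j)}\le A^{(j)}$; you obtain the latter directly from $(mS):A^{(j)}\le A^{(j)}$, whereas the paper uses $[S]A^{(j)}\le[A^{(j)}]A^{(j)}=A^{(j)}$. You then combine Douglas' theorem, the range formula for $S$, Lemma~\ref{lem6.12}(i) and monotonicity of $B\mapsto[S]B$ to get $L=[S]L\le[S]A^{(k)}$, as the paper does.

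A minor caveat on your fallback: in infinite dimensions $[A]B$ is only the supremum, not necessarily the maximum, of $\{C\in\frakS^{+}: C\le B,\ \mathscr R(C^{\sfrac{1}{2}})\subseteq\mathscr R(A^{\sfrac{1}{2}})\}$. For example, $[A]I=I$ when $A$ is injective with non-closed range, and $I$ is not in that set. The upper-bound property is all you actually use, so the argument is unaffected.
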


\begin{proof}
The operator $[S]A^{(k)}$ is positive and $S\leq A^{(j)}$,
$j=1,\ldots,n$. It follows
\[
[S]A^{(k)}\leq[S]A^{(j)}\leq[A^{(j)}]A^{(j)}=A^{(j)}
\]  
by \eqref{eq6.10}, which yields $[S]A^{(k)}\in\frakL^{+}$. If
$L\in\frakL^{+}$, we have
\[
  \mathscr R(L^{\sfrac{1}{2}})\subseteq \bigcap_{j=1}^{n}
  \mathscr R\bigl(\bigl(A^{(j)}\bigr)^{\sfrac{1}{2}}\bigr)
  = \mathscr R(S^{\sfrac{1}{2}})
\]
hence, $L=[S]L\leq[S]A^{(k)}$ by Lemma~\ref{lem6.12}. It follows that
$[S]A^{(k)}$ is the greatest element of $\frakL^{+}$.
\end{proof}

Recall that by Corollary~\ref{cor3.4} condition \eqref{eq6.10} is
equivalent to the fact that the set
\begin{equation}
\label{eq6.11}
\widetilde{\frakM} := \bigl\{[S]A^{(j)} \colon j=1,\ldots,n\bigr\}
\end{equation}
has the greatest lower bound.

Taking into account Lemma~\ref{lem6.12}~(ii), we obtain that the
preceding theorem is a generalization of the easy half of
Theorem~\ref{thm1.3} from a set $\frakM$ of two operators to an
arbitrary finite set. We conjecture that under the conditions of
Theorem~\ref{thm6.13} the greatest element $G(\frakL^{+})$ of
$\frakL^{+}$ exists if and only if $\widetilde{\frakM}$ has the greatest
lower bound and can prove this if $\dim \mathscr H < \infty$. 

Recall that Ando's theorem is equivalent to the theorem of Moreland
and Gudder \cite{MorelandGudder1999} if $\dim\mathscr H <\infty$. This
follows from Theorems~5.15, 5.18 of \cite{Ando2005}.

\begin{theorem}
\label{thm6.14}
Let $\dim\mathscr H < \infty$ and
$\frakM:=\bigl\{A^{(1)},\ldots,A^{(n)}\bigr\}$ be a finite set of
nonnegative Hermitian matrices. The greatest element $G(\frakL^{+})$ of
$\frakL^{+}$ exists if and only if the set $\widetilde{\frakM}$
of\eqref{eq6.11} has the greatest lower bound.  
\end{theorem}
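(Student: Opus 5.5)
The `if' direction is Theorem~\ref{thm6.13}. By Corollary~\ref{cor3.4}, if $\widetilde{\frakM}$ has an infimum, it is one of its elements. So some $k$ satisfies \eqref{eq6.10}, and then $G(\frakL^{+})=[S]A^{(k)}$. The work lies in the converse: assume $G(\frakL^{+})=:G$ exists, and show that $\widetilde{\frakM}$ has a least element.

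The first step is a reduction to the subspace $\mathscr K=\mathscr R(S^{\sfrac{1}{2}})=\mathscr R(S)$ (closed, since $\dim\mathscr H<\infty$). Put $\mathscr H_{2}:=\mathscr K$ and $\mathscr H_{1}:=\mathscr K^{\perp}$.
\begin{itemize}
\item By Theorem~\ref{thm6.8}, $G$ has the form $\begin{psmallmatrix}0&0\\0&G_{2}\end{psmallmatrix}$, where $G_{2}$ is the greatest element of $\frakL^{+}(\frakM/\mathscr H_{1})$.
\item By Kosaki--Pekarev, $[S]A^{(j)}=(A^{(j)})^{\sfrac{1}{2}}P(A^{(j)})^{\sfrac{1}{2}}$, with $P$ the projection onto $\{u:(A^{(j)})^{\sfrac{1}{2}}u\in\mathscr K\}$. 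In finite dimensions this should coincide with the shorted operator of $A^{(j)}$ to $\mathscr K$, i.e. $[S]A^{(j)}=\begin{psmallmatrix}0&0\\0&A^{(j)}/A^{(j)}_{1}\end{psmallmatrix}$. I would verify this from the extremal characterisation of the shorted operator: it is the greatest positive $T\le A^{(j)}$ with $\mathscr R(T)\subseteq\mathscr K$. Since $\mathscr R(T^{\sfrac{1}{2}})\subseteq\mathscr R(S^{\sfrac{1}{2}})$ gives $[S]T=T$ by Lemma~\ref{lem6.12}, it also equals the greatest $T\le A^{(j)}$ with $[S]T=T$.
\end{itemize}
Hence $\widetilde{\frakM}$ is identified with $\frakM':=\frakM/\mathscr H_{1}$, a finite set of positive matrices on $\mathscr K$, and the problem is reduced to this set.

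The key feature of $\frakM'$ is that $\bigcap_{j}\mathscr R\bigl((A^{(j)}/A^{(j)}_{1})^{\sfrac{1}{2}}\bigr)=\mathscr K$. Every shorted matrix has range exactly $\mathscr K$, because its range contains $\mathscr R(S)=\mathscr K$ ($S$ being below it with $[S]S=S$). So each element of $\frakM'$ is invertible on $\mathscr K$, and $\gamma>0$ in \eqref{eq6.5} for $\frakM'$, by compactness of the unit sphere in finite dimensions. Theorem~\ref{thm6.7a} then gives that $G(\frakL(\frakM'))$ exists and equals $G_{2}$, i.e. $\inf\frakM'$ exists. Finally, Corollary~\ref{cor3.4}, applied to the finite set $\frakM'$, yields an index $k$ with $A^{(k)}/A^{(k)}_{1}\le A^{(j)}/A^{(j)}_{1}$ for all $j$. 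Transported back, this is \eqref{eq6.10}, so $\widetilde{\frakM}$ has the greatest lower bound $[S]A^{(k)}$.

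The main obstacle is the identification of $[S]A^{(j)}$ with the shorted operator onto $\mathscr K$, together with the claim that its range is all of $\mathscr K$. The range claim needs $S\le[S]A^{(j)}$, which follows from monotonicity of $[S]\cdot$ and $[S]S=S$ (Lemma~\ref{lem6.12}(i)). For the identification I would first try the direct block computation in finite dimensions: write $A^{(j)}=C^{\ast}C$ and compute $P$ explicitly. If that becomes messy, I would fall back on the extremal characterisation, using Theorem~\ref{thm2.2} to see that a positive $T$ with range in $\mathscr K$ satisfies $T\le A^{(j)}$ if and only if $T_{2}\le A^{(j)}/A^{(j)}_{1}$.
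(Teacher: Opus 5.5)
Your proposal is correct and follows the paper's proof. You reduce to $\mathscr K=\mathscr R(S)$ via Theorem~\ref{thm6.8}, identify $[S]A^{(j)}$ with the shorted operators onto $\mathscr K$ (whose ranges are all of $\mathscr K$), apply Theorem~\ref{thm6.7a} to $\frakM/\mathscr H_{1}$, and lift the infimum back to $\widetilde{\frakM}$.

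The differences are minor. You prove the identification and range facts yourself, via the extremal property of the shorted operator, monotonicity of $[S]\cdot$, Lemma~\ref{lem6.12}, and the inclusion $\mathscr R([S]A^{(j)})\subseteq\mathscr K$ read off from the Kosaki--Pekarev formula; the paper instead cites Theorems~5.18 and 5.6 of \cite{Ando2005}. You lift back with Corollary~\ref{cor3.4} rather than Corollary~\ref{cor4.3a}. You also leave implicit the trivial cases $\mathscr K=\{0\}$ and $\mathscr K=\mathscr H$, which the paper treats separately.
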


\begin{proof}
The `if'-part follows from Theorem~\ref{thm6.13}. Conversely, let
$G(\frakL^{+})$ exist. Since $\dim\mathscr H < \infty$, the space 
$\mathscr H_{2}:=\mathscr R(S)=\mathscr R(S^{\sfrac{1}{2}})$ is a
closed subspace of $\mathscr H$. If $\mathscr H_{2}=\{0\}$, we have
$[S]A^{(j)}=[0]A^{(j)}=0$, hence, $\widetilde{\frakM}=\{0\}$ and $0$
is the greatest lower bound of $\widetilde{\frakM}$. If $\mathscr
H_{2}=\mathscr H$, then $\mathscr R(S^{\sfrac{1}{2}})=\mathscr H_{2}$,
$[S]A^{(j)}=A^{(j)}$ by Lemma~\ref{lem6.12}~(i). Therefore,
$\frakM=\widetilde{\frakM}$ and if $G(\frakL^{+})$ exists, then
$G\bigl(\frakL^{+}(\widetilde{\frakM})\bigr)$ exists as well. Since
$\mathscr R$$\bigl(A^{(j)}\bigr)\allowbreak=\mathscr H$, condition \eqref{eq6.5} is
satisfied and $G\bigl(\frakL(\widetilde{\frakM})\bigr)$ exists by
Theorem~\ref{thm6.7}. Finally, let $\mathscr H_{2}$ be a proper
subspace of $\mathscr H$ and let $\mathscr H_{1}$ be its orthogonal
complement. By Theorem~5.18 of \cite{Ando2005},
\[
  [S]A^{((j)} =
  \begin{pmatrix}
  0 & 0 \\ 0 & A^{(j)}/A_{1}^{(j)}
   \end{pmatrix} 
\]
according to $\mathscr H = \mathscr H_{1}\oplus\mathscr H_{2}$. From
Theorem~\ref{thm6.8} follows that
\[
  G\bigl(\frakL^{+}(\widetilde{\frakM})\bigr) =
  \begin{pmatrix}
  0 & 0 \\ 0 & G(\frakL^{+}(\frakM/\mathscr H_{1}))
  \end{pmatrix} 
\]
exists if $G(\frakL^{+})$ exists. Since
$\mathscr R\bigl(A^{(j)}/A^{(j)}_{1}\bigr)=\mathscr H_{2}$ by
Theorem~5.6 of \cite{Ando2005}, Theorem~\ref{thm6.7} can be applied to
the set $\frakM/\mathscr H_{1}$. It follows that the greatest lower
bound of  $\frakM/\mathscr H_{1}$ exists. From Corollary~\ref{cor4.3a}
and Theorem~\ref{thm2.2} it can be concluded that the greatest lower
bound  of $\widetilde{\frakM}$ exists as well.
\end{proof}

Note that the space $\mathscr H_{2}$ of the preceding proof is the
same as the space $\mathscr K$ of \eqref{eq6.8}. In particular,
Theorem~\ref{thm6.14} says that if $\frakM$ is a finite set of
positive invertible matrices, then $\inf\frakM$ exists if and only if
$\frakL^{+}(\frakM)$ has the greatest element. The set
\[
  \frakM =\left\{
    \begin{mypmatrix}
      1+\frac{1}{n} & \frac{1}{\sqrt{n}} \\
      \frac{1}{\sqrt{n}} & \frac{1}{n}
    \end{mypmatrix}
    ; n\in\mathbb N\right\}
\]  
of positive invertible $2\times 2$ matrices shows that such an
assertion is not true if $\frakM$ is countable, \cf Example~\ref{ex4.3}.

\vspace{0pt plus 1fill} % Fügt unsichtbaren, unendlich dehnbaren Kleber ein

%% References
%% 
%% Following citation commands can be used in the body text:
%% Usage of \cite is as follows:
%%   \cite{key}          ==>>  [#]
%%   \cite[chap. 2]{key} ==>>  [#, chap. 2]
%%   \citet{key}         ==>>  Author [#]

%% bei benutzung von biblatex
% \printbibliography

%% References with bibTeX database:

%% bibtex style file
%% \bibliographystyle{plain}
%% \bibliographystyle{amsplain}
%% \bibliographystyle{amsplainalpha}
%% \bibliographystyle{emss}
\bibliographystyle{arxiv}

% \nocite{*}
% used bibtex database file
\bibliography{minimum}

%% Authors are advised to submit their bibtex database files. They are
%% requested to list a bibtex style file in the manuscript if they do
%% not want to use model1a-num-names.bst.

%% References without bibTeX database:

% \begin{thebibliography}{00}

%% \bibitem must have the following form:
%%   \bibitem{key}...
%%

% \bibitem{}

% \end{thebibliography}

% \input{"minimumVer2.bbl"}

\end{document}